\def\acts{\ \rotatebox[origin=c]{-90}{$\circlearrowright$}\ }
\def\racts{\ \rotatebox[origin=c]{90}{$\circlearrowleft$}\ }
\def\uacts{\ \rotatebox[origin=c]{180}{$\circlearrowright$}\ }
\newcommand{\Yohsuke}[1]{{\color{blue}(Yohsuke: #1)}}
\renewcommand\th@plain{\slshape}
\newtheoremstyle{plain}
 {2mm}
 {2mm}
 {\slshape}
 {}
 {\bfseries}
 {.}
 {.5em}
 {}
\theoremstyle{plain}
\newtheorem{theorem}{Theorem}[section]
\newtheorem{corollary}[theorem]{Corollary}
\newtheorem{lemma}[theorem]{Lemma}
\newtheorem{proposition}[theorem]{Proposition}
\newtheorem{claim}[theorem]{Claim}
\newtheorem*{claim*}{Claim}
\newtheorem{conjecture}[theorem]{Conjecture}
\newtheorem{question}[theorem]{Question}
\newtheorem{mainthm}{Theorem}
\newtheorem{mainprop}[mainthm]{Proposition}
\newtheoremstyle{definition}
 {2mm}
 {2mm}
 {\normalfont}
 {}
 {\bfseries}
 {.}
 {.5em}
 {}
\theoremstyle{definition}
\newtheorem{definition}[theorem]{Definition}
\newtheorem{remark}[theorem]{Remark}
\newtheorem{example}[theorem]{Example}
\newtheorem*{acknowledgements}{Acknowledgements}
\crefname{section}{Section}{Sections}
\crefname{theorem}{Theorem}{Theorems}
\crefname{corollary}{Corollary}{Corollaries}
\crefname{lemma}{Lemma}{Lemmas}
\crefname{lemma}{Lemma}{Lemmas}
\crefname{proposition}{Proposition}{Propositions}
\crefname{claim}{Claim}{Claims}
\crefname{definition}{Definition}{Definitions}
\crefname{notation}{Notation}{Notations}
\crefname{problem}{Problem}{Problems}
\crefname{note}{Note}{Notes}
\crefname{remark}{Remark}{Remarks}
\crefname{example}{Example}{Examples}
\crefname{conjecture}{Conjecture}{Conjectures}
\crefname{question}{Question}{Questions}
\crefname{mainthm}{Theorem}{Theorems}
\crefname{mainprop}{Proposition}{Propositions}
\crefname{enumi}{}{}
\crefname{enumii}{}{}
\crefname{enumiii}{}{}
\numberwithin{equation}{section}
\def\C{{\mathbb C}}
\def\Q{{\mathbb Q}}
\def\R{{\mathbb R}}
\def\Z{{\mathbb Z}}
\def\P{{\mathbb P}}
\def\F{{\mathbb F}}
\def\Hom{\mathop{\mathrm{Hom}}\nolimits}
\def\p{{ \mathfrak{p}}}     
\def\q{{ \mathfrak{q}}}     
\def\O{{ \mathcal{O}}}
\def\a{{ \mathfrak{a}}}  
\def\I{{ \mathcal{I}}}
\def\M{{ \mathcal{M}}}
\def\Y{{ \mathcal{Y}}}
\def\lb{\llbracket}
\def\rb{\rrbracket}
\newcommand{\wedgeop}[1]{\scalebox{1}[1.5]{$\wedge$}^{#1}\,}
\renewcommand{\mod}[1]{(\mathrm{mod}\ #1)}
\DeclareMathOperator{\pr}{pr}
\DeclareMathOperator{\id}{id}
\DeclareMathOperator{\End}{End}
\DeclareMathOperator{\Spec}{Spec}
\DeclareMathOperator{\ord}{ord}
\DeclareMathOperator{\Fix}{Fix} 
\DeclareMathOperator{\Sing}{Sing} 
\DeclareMathOperator{\Frac}{Frac}
\DeclareMathOperator{\Stab}{Stab}
\renewcommand{\a}{\alpha}
\renewcommand{\b}{\beta}
\newcommand{\g}{\gamma}
\renewcommand{\d}{\delta}
\newcommand{\f}{\varphi}
\renewcommand{\l}{\lambda}
\renewcommand{\k}{\kappa}
\newcommand{\s}{\sigma}
\newcommand{\bD}{\overline{\mathbf{D}}}
\def\X{{ \mathcal{X}}}
\def\Y{{ \mathcal{Y}}}
\newcommand{\hh}{\hat{h}}
\title[Preimages Question]
{On Preimages Question}
\author{Yohsuke Matsuzawa}
\author{Kaoru Sano}
\address{Department of Mathematics, Graduate School of Science, Osaka Metropolitan University, 3-3-138, Sugimoto, Sumiyoshi, Osaka, 558-8585, Japan}
\address{NTT Institute for Fundamental Mathematics, NTT Communication Science Laboratories, NTT Corporation, 2-4, Hikaridai, Seika-cho, Soraku-gun, Kyoto 619-0237, Japan}
\email{matsuzaway@omu.ac.jp}
\email{kaoru.sano@ntt.com}
\begin{document}

\begin{abstract}
    For a surjective self-morphism on a projective variety defined over a number field,
    we study the preimages question, which asks if the set of rational points
    on the iterated preimages of an invariant closed subscheme eventually stabilize.
    We prove the answer is positive for self-morphisms on $\P^1 \times \P^1$.
    We also prove that the answer is positive even if we allow finite extension of the number field by
    a bounded degree if the morphism is \'etale or the invariant subscheme is $0$-dimensional.
    We prove certain uniformity of the stabilization for self-morphisms on abelian varieties.
    When the subscheme is not invariant, we provide counterexamples and
    a result in a positive direction for products of polynomial maps on $\P^1 \times \P^1$.
\end{abstract}

\subjclass{Primary: 37P55; Secondary: 37P15, 14G05.}
\keywords{Self-morphisms on projective varieties, rational points, preimages of subvarieties.}

\maketitle

\tableofcontents

\if0
\Yohsuke{Plan}

\begin{enumerate}
    \item Introduce Invariant Preimages Question (PIQ for abbreviation), finite extension version,
    generalized preimages question, uniformity. Introduce cancellation conjecture as well.
    \item counterexamples to Generalized Preimages Question. Automorphism, non-polarized, polarized. Commutativity is important.
    \item Invariant Preimages Question for all projective varieties for fixed number field is equivalent to finite extension version for all projective varieties. \Yohsuke{Old observation. It should be written somewhere. This is a good opportunity.}
    \item Finite extension version for \'etale morphisms.
    \item Uniformity for abelian variety.
    \item Invariant Preimages Question for $\P^1 \times \P^1$. \Yohsuke{Finite extension version should be ok, but it's too tedious to write in our current language.}
    \item Generalized Preimages Question for $\P^1 \times \P^1$ and the diagonal.
\end{enumerate}
\fi

\section{Introduction}



Let $f \colon X \longrightarrow X$ be a surjective morphism on a projective 
variety $X$ defined over a number field $K$.
For a closed subscheme $Y \subset X$, the iterated preimages $f^{-n}(Y)$
tend to become complicated as $n$ approaches $\infty$, and so
it is reasonable to expect that the $K$-rational points on $f^{-n}(Y)$
might be sparse in a certain sense.
Such a kind of question is investigated in \cite{BMS23},
motivated by a question proposed in \cite{MMSZ23}.
The central problem is the following (\cite[Question 8.4(1)]{MMSZ23},\cite[Question 1.1]{BMS23}).

\begin{question}[Preimages Question (PIQ)]\label{que:PIQ}
    Let $K$ be a number field.
    Let $X$ be a projective variety over $K$ 
    (i.e.\ integral projective scheme over $K$) and 
    $f \colon X \longrightarrow X$ a surjective morphism over $K$.
    Let $Y \subset X$ be an $f$-invariant closed subscheme 
    (i.e.\ $f|_Y$ factors through $Y$).
    Then is there $s_0 \in \Z_{\geq 0}$ such that for all $s \geq s_0$, 
    we have
    \begin{align}
        \left(f^{-s-1}(Y) \smallsetminus f^{-s}(Y) \right)(K) = \emptyset\ ?
    \end{align}
\end{question}

Let us give some comments on the assumptions.
We cannot remove the projectivity of $X$ as there is a counterexample 
\cite[section 6.1]{BMS23}.
The $f$-invariance of $Y$ is more mysterious.
We can pose a similar question for $Y$ that is not necessarily $f$-invariant (see \cref{que:PIQ_non_invariant}). While there are counterexamples (cf. \cref{sec:Preimages Question when the subvariety is not invariant}), it seems plausible to assume that certain additional conditions on $f$ or $Y$ might yield a positive answer. The $f$-invariance of $Y$ can be seen as one such condition, but we anticipate that other types of conditions exist. Currently, we do not comprehend what causes the set of rational points on $f^{-n}(Y)$ to be sparse.

Let us also give a comment on other ground fields.
Obviously, the question is nonsense if $K$ is algebraically closed.
A slightly non-trivial fact is that it has a counterexample 
over $p$-adic fields in general (see \cite[section 6.2]{BMS23}).
However, for a specific type of morphisms, PIQ is true 
over $p$-adic fields or finitely generated fields over $\Q$
(cf.\ \cite[Proof of Theorem 3.1 Step 2]{BMS23}, \cref{thm:piq_P1P1}).

\begin{remark}
    The statement of \cref{que:PIQ} makes sense for arbitrary self-map $f \colon X \longrightarrow X$
    on a set $X$ and its $f$-invariant subset $Y \subset X$.
    We use the terminology Preimages Question or PIQ for such situations flexibly.
    For example, PIQ is true for $(X,f,Y)$ means there is $s_0 \in \Z_{\geq 0}$ such that for all $s \geq s$, we have
    \begin{align}
        f^{-s-1}(Y) \smallsetminus f^{-s}(Y) = \emptyset.
    \end{align}
    In this terminology, \cref{que:PIQ} is equivalent to PIQ for $f \colon X(K) \longrightarrow X(K)$ and subset $Y(K) \subset X(K)$.
\end{remark}

\begin{remark}
    For a map $f \colon X \longrightarrow X$ on a set $X$ and 
    an $f$-invariant subset $Y \subset X$, the following are equivalent.
    \begin{enumerate}
        \item PIQ is true for $(X, f, Y)$.
        \item PIQ is true for $(X, f^n, Y)$ for some $n \geq 1$.
        \item There is $s_0 \geq 0$ with the following property: for every $x \in X$, if $f^s(x) \in Y$ for some $s \geq 0$, then $f^{s_0}(x) \in Y$.
    \end{enumerate}
\end{remark}

Back to \cref{que:PIQ}, the original \cite[Question 8.4(1)]{MMSZ23} was
the following slightly stronger question.

\begin{question}\label{que:PIQ_fin_ext}
    Let $K$ be a number field.
    Let $X$ be a projective variety over $K$ and 
    $f \colon X \longrightarrow X$ a surjective morphism over $K$.
    Let $Y \subset X$ be an $f$-invariant closed subscheme.
    Let $d \in \Z_{\geq 0}$.
    Then is there a $s_0 \in \Z_{\geq 0}$ such that for all $s \geq s_0$, 
    we have
    \begin{align}
        \left(f^{-s-1}(Y) \smallsetminus f^{-s}(Y) \right)(L) = \emptyset
    \end{align}
    for all field extensions $K \subset L$ with $[L:K] \leq d$.
\end{question}

It turns out:

\begin{mainprop}[\cref{prop:twoPIQequiv}]\label{prop:main_equiv_two_PIQ}
    \cref{que:PIQ} for all $K, X, f$, and $Y$ is equivalent to 
    \cref{que:PIQ_fin_ext} for all $K, X, f, Y$, and $d$.
\end{mainprop}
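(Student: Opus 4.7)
The implication \cref{que:PIQ_fin_ext} $\Rightarrow$ \cref{que:PIQ} is trivial (take $d = 1$). For the converse, given the data $(K, X, f, Y, d)$ of \cref{que:PIQ_fin_ext}, the plan is to encode all field extensions of $K$ of degree $\leq d$ simultaneously as $K$-rational points of auxiliary projective schemes, and then to apply \cref{que:PIQ} to those.

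For each $e \in \{1, \ldots, d\}$, I would introduce the $e$-th symmetric product $X^{(e)} := X^{\times e}/S_e$, a projective $K$-scheme, equipped with the surjective endomorphism $f^{(e)} \colon X^{(e)} \to X^{(e)}$ induced by $f^{\times e}$, together with the $f^{(e)}$-invariant closed subscheme $Y^{(e)} := \pi(Y^{\times e})$, where $\pi \colon X^{\times e} \to X^{(e)}$ is the quotient morphism. For any extension $L/K$ with $[L:K] = e$ and any $P \in X(L)$, the unordered tuple of Galois conjugates $\{\sigma_1(P), \ldots, \sigma_e(P)\}$ over the $K$-embeddings $\sigma_i \colon L \hookrightarrow \bar K$ determines a $K$-rational point $\tilde P \in X^{(e)}(K)$. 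Since $\pi^{-1}(Y^{(e)}) = Y^{\times e}$ as subsets, one has $\tilde P \in ((f^{(e)})^{-s}(Y^{(e)}))(K)$ exactly when every $\sigma_i(P) \in f^{-s}(Y)(\bar K)$, equivalently when $P \in f^{-s}(Y)(L)$. Hence any $P \in (f^{-s-1}(Y) \smallsetminus f^{-s}(Y))(L)$ produces a point $\tilde P \in ((f^{(e)})^{-s-1}(Y^{(e)}) \smallsetminus (f^{(e)})^{-s}(Y^{(e)}))(K)$, so once PIQ is known for each triple $(X^{(e)}(K), f^{(e)}, Y^{(e)}(K))$ with a bound $s_0(e)$, taking $s_0 := \max_{1 \leq e \leq d} s_0(e)$ uniformly answers \cref{que:PIQ_fin_ext}.

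The principal obstacle is that $X^{(e)}$ need not be integral even when $X$ is (for instance, when $X$ fails to be geometrically integral over $K$), so \cref{que:PIQ} does not apply to it directly. To handle this, I would decompose $X^{(e)} = \bigcup_j Z_{e,j}$ into its finitely many irreducible components, each defined over $K$ and of pure dimension $e \cdot \dim X$. Since $f^{(e)}$ is surjective and preserves this pure dimension, it induces a permutation of the components; letting $N$ denote the order of this permutation, $(f^{(e)})^N$ stabilizes each $Z_{e,j}$ and restricts to a surjective self-morphism of the integral projective variety obtained by equipping $Z_{e,j}$ with its reduced induced structure, leaving $Y^{(e)} \cap Z_{e,j}$ invariant. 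Applying \cref{que:PIQ} to each of these finitely many triples, and then combining the results via the iterate-invariance of PIQ stated in the second of the two preceding remarks, delivers PIQ for $(X^{(e)}(K), f^{(e)}, Y^{(e)}(K))$ and completes the reduction.
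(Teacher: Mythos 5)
Your proposal is correct, and its engine is the same as the paper's: pass to a symmetric product, send an $L$-point to the $K$-point given by its unordered tuple of conjugates, and transport the invariance of $Y$ to the image of $Y^{\times e}$. Where you genuinely diverge is in how the integrality hypothesis of \cref{que:PIQ} is arranged. The paper first replaces $K$ by a finite extension and $f$ by an iterate so that $X$ becomes geometrically irreducible (hence $X^{(d)}$ is a projective variety), and it reduces to Galois extensions $L/K$ of a fixed degree; you instead keep $K$, $X$, $f$ as given, treat arbitrary (not necessarily Galois) $L$ of each exact degree $e \le d$ via the $e$ distinct $K$-embeddings into $\overline{K}$, and absorb the possible reducibility of $X^{(e)}$ by noting that the surjective self-morphism permutes its irreducible components, passing to an iterate fixing each component, and applying \cref{que:PIQ} componentwise together with the iterate-invariance remark (plus the easy observation that one may take the maximum of the resulting bounds, using that $Y^{(e)}(K)$ is forward-invariant). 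This buys you a cleaner reduction: no base change of $K$ (whose harmlessness for \cref{que:PIQ_fin_ext} itself needs a word) and no detour through Galois closures, at the cost of the short component-permutation argument. Two small points you should still record: $X^{(e)}$ exists and is projective (this is \cref{rem:fingrpqt_is_proj} in the paper), and $\pi(Y^{\times e})$ should be given the scheme-theoretic image (or reduced) structure so that ``$f^{(e)}$-invariant'' holds in the scheme-theoretic sense demanded by \cref{que:PIQ}; since rational points are insensitive to this choice, nothing else in your argument changes.
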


We verify \cref{que:PIQ_fin_ext} for the following cases.

\begin{mainthm}[\cref{thm:PIQ-etale,prop:PIQ_fixedpoint}]\label{thm:main_fixedpoint_and_etale}
    The answer to \cref{que:PIQ_fin_ext} is affirmative if
    either $\dim Y = 0$, or $f$ is \'etale.
\end{mainthm}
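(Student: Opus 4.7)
The plan is to reduce both parts to the simpler \cref{que:PIQ} via \cref{prop:main_equiv_two_PIQ}: since both hypotheses --- ``$\dim Y = 0$'' and ``$f$ is \'etale'' --- are preserved under any finite base change $K \subset L$, the proof of \cref{prop:main_equiv_two_PIQ} restricts to each subclass, so it suffices to verify \cref{que:PIQ} for all $(K, X, f, Y)$ within each class.

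For the $\dim Y = 0$ case, I would first replace $f$ by an iterate so that each closed point of $Y$ is fixed by $f$, then use the additivity of PIQ over $f$-invariant partitions to reduce to $Y = \{y\}$, a single fixed closed point. The statement then becomes the finiteness of $\bigcup_{n \geq 0} f^{-n}(y)(K)$ in $X(K)$: since each $f^{-n}(y)(K)$ is finite and the sequence is monotone increasing, finiteness of the union forces stabilization. Every element of the union is preperiodic for $f$, its orbit ending at $y$, so the natural tool is a Northcott-style height bound. When $f$ admits an ample polarization, the Call--Silverman canonical height yields the result directly. In the general surjective case, one must use either the dynamical canonical height associated to $\lambda_1(f)$ (when $\lambda_1(f) > 1$) or reduce via an $f$-equivariant fibration to a polarized quotient (when $\lambda_1(f) = 1$).

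For the \'etale case, after an analogous reduction to $Y = \{y\}$ with $f(y) = y$, the preimages $f^{-n}(y)$ form a tower of finite \'etale covers of $\Spec \kappa(y)$, whose closed points are organized as a rooted tree with branching controlled by the open subgroup of index $\deg f$ in the \'etale fundamental group of $X$ classifying $f$. The $K$-rational closed points of this tower form a $G_K$-stable subset whose finiteness I would establish by a Hermite--Minkowski argument: by spreading $X$ out to an integral model over $\mathcal{O}_{K, S}$ for a sufficient finite set of places $S$, the entire dynamical tower becomes unramified outside $S$, and only finitely many \'etale extensions of $K$ of bounded degree are unramified outside $S$.

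The hardest step in the $\dim Y = 0$ case will be the non-polarized regime, where a canonical height is not directly available, so one must invoke either dynamical-degree canonical heights or zero-entropy structure theorems. In the \'etale case, the hardest step will be rigorously establishing that ramification is controlled by a fixed finite set of places independent of $n$: while intuitively clear from $f$ being \'etale everywhere, pinning this down in the integral model requires care.
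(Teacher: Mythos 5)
Your opening reduction does not work for the \'etale half, and this is precisely the trap the paper warns against: the proof of \cref{prop:twoPIQequiv} verifies \cref{que:PIQ_fin_ext} by applying \cref{que:PIQ} to the induced self-morphism on the symmetric product $X^{(d)} = X^d/\mathfrak{S}_d$, and that induced morphism is in general \emph{not} \'etale even when $f$ is (the paper states exactly this in the remark following \cref{thm:PIQ-etale}). So one cannot ``restrict the proof of \cref{prop:main_equiv_two_PIQ} to the subclass''; the bounded-degree \'etale statement has to be proved directly. A second, more serious problem with your \'etale argument is that you reduce to $Y=\{y\}$ a fixed point: in the \'etale case $Y$ is an arbitrary invariant closed subscheme, typically of positive dimension, and no such reduction is available. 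Your tower/Hermite--Minkowski sketch therefore only addresses a situation already covered by the $\dim Y=0$ case (and even there, knowing that the residue fields of points in $\bigcup_n f^{-n}(y)$ of degree $\leq d$ lie in finitely many fields does not bound the number of such points or force stabilization). The paper's actual proof of \cref{thm:PIQ-etale} hinges on \cref{lem:inverse-images-by-etale}: for normal $Y$, after spreading out over $\O_{K,S}$, \'etaleness makes $F^{-s}(\Y)$ a disjoint union of normal integral schemes, and counting $\F_{q^j}$-points ($j\leq d$) on one special fiber bounds, uniformly in $s$, the number of components of $f^{-s}(Y)$ carrying points of degree $\leq d$; the general case then follows by induction on $\dim Y$ via the normalization and the singular locus. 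None of this appears in your outline.

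For $\dim Y = 0$ the gap is the one you yourself flag: the Northcott/canonical-height strategy only works when $f$ is polarized, and the paper explicitly notes that in general no such height argument is available. Your fallback (``dynamical canonical height associated to $\lambda_1(f)$'' or ``zero-entropy structure theorems'') is not a proof: the canonical height attached to a nef eigenclass of a non-polarized $f$ need not satisfy Northcott, so bounded canonical height does not yield finiteness of $\bigcup_n f^{-n}(y)(K)$, and no equivariant fibration argument is supplied. The paper's proof of \cref{prop:PIQ_fixedpoint} avoids heights entirely: assuming failure, it builds a compatible backward chain $(x_s)_{s\geq 0}$ of points of degree $\leq d$ over the fixed point $y$ with $f(x_{s+1})=x_s$ and $x_s \notin f^{-(s-1)}(y)$ (the residue fields form an increasing tower of degree $\leq d$, hence stabilize, so after one finite extension all $x_s$ are rational), spreads everything out over $\O_{K,S}$, and for each place $v \notin S$ uses finiteness of $X_{\F_v}(\F_v)$ and the fixed-point relation to force $x_m(v) = x_0(v)$ for all $m$ and all $v$, whence $x_m = x_0$, a contradiction. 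This also proves the bounded-degree statement directly, making your symmetric-product detour unnecessary in that case.
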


Note that \cref{que:PIQ} was proven for \'etale morphisms 
in \cite{BMS23}. 

In \cite{BMS23}, the authors proved that \cref{que:PIQ} is true for
self-morphisms on $\P^1 \times \P^1$ of the form $f \times f$
and the diagonal subvariety.
We generalized this theorem to arbitrary self-morphisms on 
$\P^1 \times \P^1$ and any invariant closed subscheme
(as well as to finitely generated fields).

\begin{mainthm}[\cref{thm:piq_P1P1}]\label{thm:main_piq_P1P1}
    Let $K$ be a finitely generated field over $\Q$. 
    Let $\f \colon \P^1_K \times_K \P^1_K \longrightarrow \P^1_K \times_K \P^1_K$
    be a surjective morphism over $K$.
    Let $Y \subset \P^1_K \times_K \P^1_K$ be a $\f$-invariant closed subscheme.
    Then there is $s_0 \geq 0$ such that for all $s \geq s_0$, we have
    \begin{align}
        \left(\f^{-s-1}(Y) \smallsetminus \f^{-s}(Y) \right)(K) = \emptyset.
    \end{align}
\end{mainthm}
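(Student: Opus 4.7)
The plan is to reduce, via iteration, to simple structural cases where either \cref{thm:main_fixedpoint_and_etale} applies directly or the invariant curve admits a very rigid description. \textbf{First, structural reductions.} Any surjective endomorphism of $\P^1\times \P^1$ either preserves or swaps the two rulings, so after replacing $\f$ by $\f^2$ I may assume $\f = f_1\times f_2$ for surjective $f_i \colon \P^1_K \longrightarrow \P^1_K$. The irreducible components of $Y$ are permuted by $\f$, and a further iterate replacement makes every component $\f$-invariant; since PIQ is insensitive to passing to iterates, I may assume $Y$ is irreducible. The case $\dim Y = 0$ follows from \cref{thm:main_fixedpoint_and_etale} after spreading out from $K$ to a number subfield over which $\f$ and $Y$ are defined, using that the validity of PIQ is inherited from a suitable number subfield (the finite-extension version \cref{prop:main_equiv_two_PIQ} absorbs the bounded-degree residue extensions introduced by spreading out).

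\textbf{Second, case analysis for curves.} Assume $Y$ is an irreducible invariant curve and consider the projections $\pi_i \colon Y \longrightarrow \P^1$. There are three cases: (i) $\pi_1$ is constant and $Y = \{a\}\times \P^1$ with $f_1(a) = a$; (ii) $\pi_2$ is constant and $Y = \P^1 \times \{b\}$ with $f_2(b) = b$; (iii) both $\pi_i$ are dominant. In case (i) a direct computation gives
\begin{align*}
    \left(\f^{-s-1}(Y) \smallsetminus \f^{-s}(Y)\right)(K) = \left(f_1^{-s-1}(a) \smallsetminus f_1^{-s}(a)\right)(K) \times \P^1(K),
\end{align*}
so PIQ for $(\f,Y)$ is equivalent to PIQ for the $0$-dimensional $f_1$-invariant subscheme $\{a\}\subset \P^1$ under $f_1$, already handled in the previous paragraph. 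Case (ii) is symmetric.

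\textbf{Third, the dominant case.} In case (iii), the invariance $\f(Y)=Y$ induces on the normalization $\widetilde{Y}$ a surjective self-morphism $g \colon \widetilde{Y} \longrightarrow \widetilde{Y}$ satisfying $f_i \circ \pi_i = \pi_i \circ g$ for $i=1,2$, exhibiting a Medvedev--Scanlon-type semi-conjugacy between $f_1$ and $f_2$ across $Y$. A $K$-rational ``new'' preimage in $\f^{-s-1}(Y)\smallsetminus \f^{-s}(Y)$ maps, after applying $\f^s$, to a $K$-point of $\f^{-1}(Y)\smallsetminus Y$, i.e.\ to a point on one of the residual components $Y_1,\ldots,Y_k$ of $\f^{-1}(Y)$ away from $Y$. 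Pulling back to $\widetilde{Y}$ via a field extension of uniformly bounded degree supplied by the normalization map, this reduces the problem to PIQ for $g$ on $\widetilde{Y}$ with target the $0$-dimensional subscheme cut out by $Y_1,\ldots,Y_k$ pulled back; \cref{thm:main_fixedpoint_and_etale} combined with \cref{prop:main_equiv_two_PIQ} absorbs the bounded-degree extension and gives PIQ in this case, with heights coming from an integral model of $\widetilde{Y}$ over a finitely generated subring of $K$ playing the role that number-field Northcott plays in the arithmetic reduction.

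\textbf{Main obstacle.} The technical heart is case (iii). One must (a) establish the semi-conjugacy rigidity for invariant curves under a product of possibly rational (not merely polynomial) self-maps on $\P^1$ over a finitely generated base; (b) track carefully the residual components $Y_1,\ldots,Y_k$ and verify that the bounded-degree field extension needed to lift $K$-points through the normalization indeed lies within the scope of \cref{prop:main_equiv_two_PIQ}; and (c) replace the number-field Northcott input by heights attached to an integral model over a finitely generated subring of $K$. The reductions and the handling of cases (i) and (ii) are essentially formal once these pieces are in place, and combining them delivers the theorem.
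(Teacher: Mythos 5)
There is a genuine gap, and it sits exactly where you locate the ``technical heart.'' In your case (iii), the reduction to a dynamical system on $\widetilde{Y}$ does not work: a $K$-point $x$ with $\f^{s+1}(x)\in Y$ but $\f^{s}(x)\notin Y$ has its entire orbit segment $x,\f(x),\dots,\f^{s}(x)$ \emph{off} $Y$ (the point $\f^{s}(x)$ lies on a residual component $Y_j$ of $\f^{-1}(Y)$), so there is no way to transfer the question to the induced map $g$ on $\widetilde{Y}$ with a $0$-dimensional target; the semi-conjugacy $f_i\circ\pi_i=\pi_i\circ g$ only governs points that already lie on $Y$. What your reduction actually produces is the statement that $K$-points on $\f^{-s}(Y_j)$ die out for large $s$, i.e.\ a preimages question for the \emph{non-invariant} subvarieties $Y_j$ (cf.\ \cref{que:PIQ_non_invariant}), which is precisely the kind of statement that can fail (\cref{prop:gpiq_ce_general_recipe}) and is not supplied by \cref{thm:main_fixedpoint_and_etale}. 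Separately, your use of \cref{thm:main_fixedpoint_and_etale} over a finitely generated field $K$ by ``spreading out to a number subfield'' is not available: when $K$ has positive transcendence degree the data need not descend to a number field, and even when it does, $K$-rational points are not $L$-points for any number field $L$, so PIQ over the number field (even in its bounded-degree form \cref{prop:main_equiv_two_PIQ}) says nothing about $K$-points; note also that PIQ genuinely fails over some larger fields (e.g.\ $p$-adic fields), so no formal ``inheritance'' argument can work. This affects your $\dim Y=0$ case and cases (i)--(ii) as well, although there the fixed-point argument could plausibly be adapted to finitely generated rings over $\Z$; the paper instead avoids this entirely.

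For comparison, the paper's proof does begin with the same formal reductions (pass to $\f^2=f\times g$, spread out over a finitely generated $\Z$-algebra $R$), but then chooses an embedding $K\hookrightarrow\Q_p$ with $R\subset\Z_p$ and $p$ prime to all ramification indices, and proves the stronger $p$-adic statement \cref{prop:PIQ-P1P1-Zp} for $\Z_p$-points. The invariant curve is then studied locally at fixed points of the reduction mod $p$: its defining power series $\Phi$ satisfies $\Phi\mid\Phi(F(z),G(w))$, and the argument splits according to whether the local multipliers are units, nonzero non-units, or zero, using Rivera-Letelier's uniformization (\cref{thm:p-adic-unif}), boundedness and stabilization of prime factorizations in polydisc algebras (\cref{cor:prime-fac-bdd}, \cref{lem:prim-fact-ev-stab}), and, in the superattracting case, an analysis showing the relevant local branch divides $z^k-w^l$. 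That local-analytic machinery is what replaces both the number-field input and the rigidity you hope to invoke in case (iii); your proposal as written does not contain a substitute for it.
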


We have mentioned that the invariance of $Y$ in \cref{que:PIQ}
cannot be removed in general.
But we have the following particular case that the answer is positive.

\begin{mainthm}[\cref{thm:GPIQ_P1P1}]\label{thm:main_gpiq_P1P1}
    Let $K$ be a number field.
    Let $f,g \colon \P^1_K \longrightarrow \P^1_K$ be polynomial maps of the same degree $d \geq 2$.
    Consider the product morphism 
    $f \times g \colon \P^1_K \times \P^1_K \longrightarrow \P^1_K \times \P^1_K$
    and the diagonal subvariety $\Delta \subset \P^1_K \times \P^1_K$. 
    There is $s_0\in \Z_{\geq 0}$ such that for all $s \geq s_0$, we have
    \begin{align}
        \biggl((f\times g)^{-s-1}(\Delta) \smallsetminus \bigcup_{0 \leq i \leq s} (f \times g)^{-i}(\Delta) \biggr)(K) = \emptyset.
    \end{align}
\end{mainthm}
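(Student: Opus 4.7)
The plan has three components: reduction to the affine chart, a canonical-height comparison, and a geometric analysis of the curve $Z_1 := (f \times g)^{-1}(\Delta)$ via the classification of decompositions of $f(U) - g(V)$.

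Since $f$ and $g$ are polynomials of degree $d \geq 2$, the point $\infty \in \P^1$ is totally ramified and fixed by each, so $(\infty, \infty) \in \Delta$ is its own unique $(f \times g)$-preimage; hence any $K$-point of $(f \times g)^{-s-1}(\Delta)$ with a coordinate at $\infty$ already lies in $\Delta$ and is not genuinely new. We may therefore restrict to $(x, y) \in K \times K$, and writing $Z_n := (f \times g)^{-n}(\Delta)$, the common value $z := f^n(x) = g^n(y)$ lies in $K$. Let $\hh_f$ and $\hh_g$ be the canonical heights for $f$ and $g$; both differ from the standard Weil height $h$ on $\P^1$ by bounded functions (as $f$ and $g$ share degree $d$), so there is a constant $C = C(f, g)$ with $|\hh_f(z) - \hh_g(z)| \leq C$ for all $z \in \P^1(\overline{K})$. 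Combined with $\hh_f(x) = d^{-n} \hh_f(z)$ and $\hh_g(y) = d^{-n} \hh_g(z)$, this yields $|\hh_f(x) - \hh_g(y)| \leq C d^{-n}$ for every $(x, y) \in Z_n(K)$; the canonical $f$- and $g$-heights of the coordinates become arbitrarily close as $n \to \infty$.

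For a candidate genuinely new $(x, y) \in (Z_{s+1} \setminus \bigcup_{i \leq s} Z_i)(K)$, pushing forward $s$ steps yields $(a, b) := (f^s(x), g^s(y)) \in (Z_1 \setminus \Delta)(K)$, and the entire intermediate orbit $(f^j(x), g^j(y))$ for $0 \leq j \leq s$ avoids $\Delta$. The bidegree-$(d, d)$ curve $Z_1 = \{f(u) = g(v)\}$ decomposes into irreducible components classified by the Ritt-Fried-Bilu-Tichy theory: each is either of geometric genus $\geq 2$ (whence its $K$-rational points are finite by Faltings) or is ``special,'' coming from a common decomposition $f = P \circ R$, $g = Q \circ R$ with $\deg R \geq 2$. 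On the higher-genus components $(a, b)$ lies in a finite set; for each such $a \in K$, the set $\bigcup_{s \geq 0} (f^s)^{-1}(a)(K)$ is finite by Northcott applied to $\hh_f(w) = d^{-s}\hh_f(a) \leq \hh_f(a)$, and similarly for $b$, so the set of feeders $(x, y)$ is finite and each is genuinely new at most once, yielding a bounded admissible depth. On the special components, the common right factor $R$ permits an iterative reduction of the dynamics eventually landing in the invariant case $f = g$, where $\Delta$ is $(f \times g)$-invariant and \cref{thm:main_piq_P1P1} applies directly.

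The main obstacle will be the special components with low geometric genus, in which $(Z_1 \setminus \Delta)(K)$ can be infinite and Faltings does not apply: the task is to show that the orbit-exclusion condition (that the intermediate orbit avoids $\Delta$) combined with the near-equality $\hh_f(x) \approx \hh_g(y)$ forces the admissible ``new twists'' at each level to come from a finite reservoir controlled by the arithmetic of $K$, such as the finitely many roots of unity in $K$ afforded by Kronecker's theorem.
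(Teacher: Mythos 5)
There is a genuine gap, and you have in fact named it yourself: your argument only analyzes the first preimage curve $Z_1=\{f(u)=g(v)\}$ and splits its components into genus $\geq 2$ ones (handled by Faltings plus a bounded-height argument, which is fine) and ``special'' low-genus ones, for which you offer no proof --- only the hope that a Bilu--Tichy-type common decomposition ``permits an iterative reduction\dots eventually landing in the invariant case $f=g$.'' That reduction is not established, and it is exactly the hard case: the paper's own counterexamples in this section (the commuting degree-$5$ maps coming from CM by $\Z[i]$, and their quotients on $\P^1$) show that low-genus components of $Z_1$ with infinitely many rational points can persist through all iterated preimages and destroy the conclusion. Your height comparison $|\hh_f(x)-\hh_g(y)|\leq Cd^{-n}$ holds verbatim for those counterexamples as well, so it cannot be the decisive input; whatever closes the special case must use the polynomial hypothesis in an essential geometric way, and your sketch never does. (As a side remark, your statement of the special case is also off: in the Bilu--Tichy/Ritt classification the shared composition factor sits on the \emph{left}, $f=\phi\circ f_1\circ\lambda$, $g=\phi\circ g_1\circ\mu$ with $(f_1,g_1)$ a standard pair, not a common right factor $f=P\circ R$, $g=Q\circ R$; and even granting some common factor, no argument is given that iterating the reduction terminates in $f=g$.)

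The paper closes precisely this gap by not working with $Z_1$ alone. Assuming the conclusion fails, it builds an inverse system of irreducible components $C_m\subset (f\times g)^{-m}(\Delta)$, each with infinitely many $K$-points and not contained in $\bigcup_{i\leq m-1}(f\times g)^{-i}(\Delta)$, shows via canonical heights of \emph{subvarieties} (Fakhruddin's equivariant embedding plus Hutz's theory) that $\deg C_m\to\infty$, and then uses the fact that $f,g$ are polynomials through \cref{lem:etale_over_infty}: the normalization of any component of $(P\times Q)^{-1}(\Delta)$ is \'etale over $\infty$ under either projection. Feeding this into the Riemann--Hurwitz-type genus inequality of \cite[Proposition 4.1]{BMS23} forces $g_{C_{m+1}}\geq 2$ once $\deg C_m$ is large, contradicting Faltings. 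So the growth of degree and genus along the whole tower, not a classification of $Z_1$, is what rules out the low-genus obstruction; if you want to salvage your outline, you would need to prove an analogue of this degree/genus growth (or a genuine termination argument for your special-case reduction), neither of which is present in the proposal.
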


We have seen that the answer to \cref{que:PIQ} is positive 
for some classes of morphisms and varieties.
It is then natural to ask how much extent the (minimum of the) number $s_0$ in \cref{que:PIQ} depends on $f$, $Y$, and $X$.
When $X$ is an abelian variety, we prove the following uniformity on $s_0$.

\begin{mainthm}[\cref{thm:uniform-PIQ-ab-var}]\label{thm:main_uniformity_ab_var}
    Let $X$ be an abelian variety over a number field $K$.
    Then there exists $s_0 \in \Z_{\geq 0}$ depending only on $X$ with the 
    following property.
    For any surjective morphism $f \colon X \longrightarrow X$ and 
    any reduced closed subscheme $Y \subset X$ with $f(Y) \subset Y$, we have 
    \begin{align}
        (f^{-s_0 -1}(Y) \smallsetminus f^{-s_0}(Y))(K) = \emptyset.
    \end{align}
\end{mainthm}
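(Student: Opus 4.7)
The plan combines the Mordell--Weil theorem with the structural theory of $f$-invariant subvarieties on abelian varieties. By Mordell--Weil, $G := X(K)$ is a finitely generated abelian group, and its torsion subgroup $G_{\mathrm{tors}}$ has cardinality bounded by a constant $N = N(X, K)$ depending only on $X$ and $K$. Any surjective $K$-morphism $f \colon X \to X$ decomposes as $f = \tau_b \circ \phi$, where $b := f(0_X) \in X(K)$ and $\phi \in \End_K(X)$ is an isogeny; in particular $f$ is finite étale in characteristic zero. The key quantitative consequence is that for any $y \in X(K)$, the fiber $f^{-1}(y)(K)$ is a torsor under $\ker(\phi)(K) \subseteq G_{\mathrm{tors}}$, so $|f^{-n}(y)(K)| \leq N$ for every $n \geq 0$.

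For the zero-dimensional case, I would first bound the period of any $K$-rational $f$-periodic point by a constant $L = L(X)$ depending only on $X$: via an appropriate translation, $f$-periodic orbits correspond to $\phi$-periodic orbits, which consist of torsion points whose periods are controlled by the finite action of $\phi$ on a fixed torsion subgroup $X[m]$ with $m$ bounded in terms of $X$. Then, for a reduced zero-dimensional $f$-invariant $Y$, one reduces along $f$-orbits of components to a single $f$-periodic point $P$ of period $l \leq L$, and passing to $g := f^l$ (for which PIQ is equivalent to PIQ for $f$), the ascending chain $\{P\}(K) \subseteq g^{-1}(P)(K) \subseteq g^{-2}(P)(K) \subseteq \cdots$ consists of nested sets of size at most $N$, hence stabilizes in at most $N$ steps, yielding $s_0 \leq L \cdot N$ in the zero-dimensional case.

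For the general case, I would induct on $\dim X$, using a structure theorem for $f$-invariant closed subvarieties of abelian varieties (in the spirit of Fakhruddin): after replacing $f$ by a uniformly bounded power so that each irreducible component of $Y$ is $f$-stable, each such component is a translate $c + A$ of an abelian subvariety $A \subsetneq X$. The entrance-time question into $c + A$ on $X$ reduces, via the quotient morphism $\pi \colon X \to X/A$, to the entrance-time question into $\{\pi(c)\}$ in the lower-dimensional abelian variety $X/A$, so the zero-dimensional bound on $X/A$ applies. The main obstacle is producing a bound $s_0$ genuinely uniform in $f$ and $Y$, and in particular uniform across the choice of $A$: although there may be infinitely many abelian subvarieties $A \subseteq X$, Poincaré's complete reducibility constrains the isogeny classes of the quotients $X/A$ to finitely many, and together with uniform bounds for the torsion of $(X/A)(K)$ and for the period constant $L(X/A)$, this yields the desired $s_0$ depending only on $X$.
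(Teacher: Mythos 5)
Your zero-dimensional step is right in outline but its key justification is off: translating by a point $P$ of a periodic orbit conjugates $f=T_b\circ\phi$ into $x\mapsto \phi(x)+c$ with $c=f(P)-P$, not into $\phi$, and the orbit need not consist of torsion points (on an elliptic curve take $f(x)=Q-x$ with $Q\in X(K)$ non-torsion: the orbit $\{0,Q\}$ of $0$ contains a non-torsion point), so the period cannot be controlled by the action of $\phi$ on a fixed torsion subgroup $X[m]$ alone. The statement you actually need --- the period of a $K$-rational $f$-periodic point is bounded in terms of $X$ and $K$ --- is true, but its proof requires the Mordell--Weil \emph{rank}: from $\sum_{i<l}\phi^i(c)=0$ in $X(K)\otimes_\Z\Q$ one sees that the minimal polynomial of $\phi$ on the cyclic subspace generated by $c$ divides $1+t+\cdots+t^{l-1}$, hence is a product of cyclotomic polynomials $\Phi_j$ with Euler value $\varphi(j)\le \rank X(K)$; this bounds the period modulo torsion, and the torsion bound finishes. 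This is precisely the mechanism of the paper's \cref{lem:period-of-subspaces}, so the gap here is reparable.

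The genuine gap is the structural claim driving your positive-dimensional induction: it is false that, after a (bounded) iterate, every irreducible $f$-stable subvariety of an abelian variety is a translate $c+A$ of an abelian subvariety. Take $f=\id_X$, or $f$ a translation by a torsion point, or more generally any $f=T_a\circ\phi$ for which $\phi-\id$ is not an isogeny, and let $Y$ be a genus-two curve embedded in its Jacobian; such $Y$ is invariant but is no translate of an abelian subvariety. Since your inductive step reduces everything to entrance times into $\{\pi(c)\}\subset X/A$, it collapses exactly in the hard cases. The paper instead sets $A=\Stab(Y_1)^{\circ}$, proves a uniform bound on the period of $A$ under $\phi$ (\cref{lem:period-of-absubvar}, via Lenstra--Oort finiteness of abelian subvarieties combined with the rank argument --- a bound you would also need, since $f$ only descends to $X/A$ after an iterate with $\phi(A)=A$, and you do not address it), and then exploits that $\pi(Y_1)$, having finite stabilizer, is of general type by \cite[(3.7) Theorem]{Mo85}, so that a power of the induced map is the identity on it; the induction on $\dim X$ is then carried out on the fibers of the quotient by $C=A+(\phi-\id)(X)$, or via torsion of $(X/A)(K)$, with Lenstra's finiteness supplying uniformity over the occurring subvarieties and quotients. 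Finally, your preliminary reduction ``replace $f$ by a uniformly bounded power so that each component of $Y$ is $f$-stable'' is itself an unproved uniformity assertion; the paper avoids it by working with cycles of components and only bounding the period of the abelian subvariety $A$, not of the components themselves.
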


\begin{remark}
    In the course of the proof of \cref{thm:main_uniformity_ab_var},
    we prove a uniform bound of the periods of periodic abelian subvarieties 
    under isogenies \cref{lem:period-of-absubvar}.
    While our proof heavily relies on several special properties of abelian varieties,
    it is worth noting the similarity of this and 
    the problem raised in \cite[Question 8.4 (2)]{MMSZ23}.
\end{remark}

\begin{remark}
    Under the assumption of general uniform boundedness conjecture on the torsion points 
    of abelian varieties, we obtain a stronger version of \cref{thm:main_uniformity_ab_var},
    subject to the condition that $Y$ is geometrically irreducible.
    Refer to \cref{prop:uniform_PIQ_tors_conj} for details.
\end{remark}

\noindent
{\bf Notation.}

\begin{enumerate}
    \item A {\it variety} over a field $k$ is an irreducible reduced separated scheme of finite type over $k$.
    For a separated scheme $X$ of finite type over $k$,
    a {\it subvariety} of $X$ is an irreducible reduced closed subscheme of $X$.
    
    \item For a self-map $f \colon X \longrightarrow X$ on a set $X$,
    $\Fix(f, X)$ is the set of fixed point, i.e. 
    $\Fix (f, X) = \{x \in X \mid f(x) = x\}$.
    
    \item For a rational number $a$, 
    we set $|a|_p = p^{-v_p(a)}$, where $v_p$ is the $p$-adic valuation.
    In particular, $|p^n|_p = p^{-n}$ for $n \in \Z$.
    
    \item 
    A valued field $(k, |\ |)$ is a field equipped with 
    a multiplicative absolute value $|\ |$.
    \if0
    The polydisc algebra of radii $(r_1,\dots, r_n) \in \R_{>0}^n$ is denoted by $k\{r_1^{-1}T_1,\dots, r_n^{-1}T_n\}$:
    \begin{align}
        &k\{r_1^{-1}T_1,\dots, r_n^{-1}T_n\} \\
        &= \left\{ \sum_{i_1,\dots, i_n \geq 0}a_{i_1 \cdots i_n} T_1^{i_1}\cdots T_n^{i_n} 
        \ \middle|\  \lim_{|i_1 + \cdots + i_n| \to \infty}|a_{i_1 \cdots i_n}|r_1^{i_1}\cdots r_n^{i_n}=0  \right\}
    \end{align}
    polydisc algebras, strictly $k$-affinoid algebras
    \Yohsuke{write}
    \fi
    
    \item For a non-archimedean valued field $k$,
    the classical (closed) disc of radius $r > 0$ is denoted by $\bD_k(r)$:
    $\bD_k(r) = \{a \in k \mid |a|\leq r\}$.
\end{enumerate}

\noindent
{\bf Organization of the paper.}

In \cref{sec:Equivalence of two Preimages Questions}, 
we prove \cref{prop:main_equiv_two_PIQ}, i.e., the two preimages questions are equivalent.
In \cref{sec:Preimages Question for a fixed point}, 
we prove that \cref{que:PIQ_fin_ext} is true if the invariant subscheme is $0$-dimensional.
In \cref{sec:Etale morphisms}, we prove it for \'etale morphisms.
In \cref{sec:Preimages Question when the subvariety is not invariant},
we introduce a variant of \cref{que:PIQ} where the subscheme is not assumed to be invariant.
We prove \cref{thm:main_gpiq_P1P1}, which gives an affirmative answer.
We also note that a certain dynamical structure produces counterexamples
to the generalized question.
In \cref{sec:uniformity of s0 ab var}, we prove \cref{thm:main_uniformity_ab_var}.
In \cref{sec:Preliminaries on polydisc algebras,sec:padic uniformization},
we recall some facts and prepare several lemmas that are used in the proof of
\cref{thm:main_piq_P1P1}.
In \cref{sec:piq P1P1}, we prove \cref{thm:main_piq_P1P1}.

\begin{acknowledgements}
 The authors would like to thank Yusuke Okuyama, Joseph Silverman, 
and Yu Yasufuku for helpful discussions.
The first author is supported by JSPS KAKENHI Grant Number JP22K13903.
The second author is supported by JSPS KAKENHI Grant Number JP20K14300.
\end{acknowledgements}

\section{Equivalence of two Preimages Questions}\label{sec:Equivalence of two Preimages Questions}

In this section, we prove the following.

\begin{proposition}\label{prop:twoPIQequiv}
    \cref{que:PIQ} for all $K, X, f$, and $Y$ is equivalent to 
    \cref{que:PIQ_fin_ext} for all $K, X, f, Y$, and $d$.
\end{proposition}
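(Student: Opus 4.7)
The direction from \cref{que:PIQ_fin_ext} to \cref{que:PIQ} is immediate by specializing to $d=1$. For the reverse implication, given data $(K, X, f, Y, d)$ as in \cref{que:PIQ_fin_ext}, the task is to produce a single $s_0$ that works uniformly over all field extensions $L/K$ with $[L:K]\le d$. The plan is to encode any point of $X(L)$ by the multiset of its Galois conjugates, i.e., as a $K$-rational point of a symmetric power $\Sym^e(X)$, and then apply \cref{que:PIQ} to each of the finitely many triples obtained as $e$ ranges over $\{1,\dots,d\}$.

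Concretely, for each $1\le e \le d$, let $\pi_e\colon X^e \to \Sym^e(X) = X^e/S_e$ be the quotient, let $\tilde f_e\colon \Sym^e(X) \to \Sym^e(X)$ be the surjective morphism induced by $f\times\cdots\times f$, and set
\begin{align}
    \tilde Y_e = \pi_e\!\left(\bigcup_{i=1}^{e} X^{i-1}\times Y \times X^{e-i}\right),
\end{align}
which is $\tilde f_e$-invariant because $f(Y)\subset Y$. For $P\in X(L)$ with $[L:K]=e$, the multiset $\{\sigma(P) : \sigma\colon L\hookrightarrow \KK\}$ is permuted by $\mathrm{Gal}(\KK/K)$, hence descends to a $K$-rational point $[P]_e \in \Sym^e(X)(K)$. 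The assignment $P\mapsto [P]_e$ intertwines $f$ with $\tilde f_e$, and since $Y$ is defined over $K$ the transitivity of Galois on $\{\sigma\}$ forces the equivalence
\begin{align}
    P\in Y(L) \iff [P]_e \in \tilde Y_e(K).
\end{align}
Iterating, any $P\in \bigl(f^{-s-1}(Y)\smallsetminus f^{-s}(Y)\bigr)(L)$ yields $[P]_e\in \bigl(\tilde f_e^{-s-1}(\tilde Y_e)\smallsetminus \tilde f_e^{-s}(\tilde Y_e)\bigr)(K)$, so \cref{que:PIQ} applied to each $(\Sym^e(X), \tilde f_e, \tilde Y_e)$ furnishes a bound $s_0^{(e)}$, and $s_0 = \max_{1\le e \le d} s_0^{(e)}$ completes the argument.

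The main obstacle I anticipate is that \cref{que:PIQ} is posed for projective \emph{varieties} (integral schemes), whereas $\Sym^e(X)$ may fail to be irreducible or reduced even when $X$ itself is an integral projective variety (for instance when $X$ is not geometrically integral). I plan to handle this by passing to the reduced irreducible components: since $\tilde f_e$ is surjective it permutes the components of $\Sym^e(X)$, so some iterate $\tilde f_e^N$ stabilizes each integral component $V\subset \Sym^e(X)$ and restricts to a surjective self-morphism $V\to V$, to which \cref{que:PIQ} then applies with the invariant closed subscheme $(\tilde Y_e\cap V)_{\mathrm{red}}$. Combining the finitely many bounds so obtained, and using that PIQ for $\tilde f_e$ is equivalent to PIQ for $\tilde f_e^N$ (as recorded in the remark following \cref{que:PIQ}), produces the required uniform $s_0$ and finishes the reduction.
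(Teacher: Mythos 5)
Your proposal is correct and is essentially the paper's own argument: encode points of degree at most $d$ as $K$-rational points of a symmetric power of $X$, transfer $Y$ to an invariant closed subset there, and apply \cref{que:PIQ} over $K$. The remaining differences are only bookkeeping: you use the locus where at least one coordinate lies in $Y$ instead of the image of $Y^{(d)}$, you treat each degree $e \leq d$ and arbitrary $L$ directly rather than reducing to Galois extensions of degree $d$, and you handle possible reducibility of $\Sym^e(X)$ by passing to components instead of first making $X$ geometrically irreducible (you should still record, as in \cref{rem:fingrpqt_is_proj}, that $\Sym^e(X)$ is projective so that \cref{que:PIQ} applies).
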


\begin{remark}
    We do not claim the equivalence of two questions for individual dynamical systems $(X,f, Y)$.
\end{remark}

\begin{proof}
    It is clear that \cref{que:PIQ_fin_ext} implies \cref{que:PIQ}.
    Assume \cref{que:PIQ} is true for all $K$, $X$, $f$, and $Y$.
    Let $K$, $X$, $f$, $Y$, and $d$ as in \cref{que:PIQ_fin_ext}.
    By replacing $K$ with a finite extension and replacing $f$ with iterates,
    we may assume $X$ is geometrically irreducible.
    It is enough to show the statement for $L$ that is Galois over $K$ with $[L: K] = d$.
    Now let $X^{(d)} := X^d / \mathfrak{S}_d$ be the symmetric product of $X$.
    Note that $X^{(d)}$ is projective over $K$ (cf.\ \cref{rem:fingrpqt_is_proj}).

    Let us consider the following commutative diagram
    \begin{equation}
        \begin{tikzcd}
            Y^{d} \arrow[r,hookrightarrow] \arrow[d] & X^d \arrow[r, "f \times \cdots \times f"] \arrow[d,"\pi"] & X^d \arrow[d,"\pi"]  \\
            Y^{(d)} \arrow[r] \arrow[rd,twoheadrightarrow] & X^{(d)} \arrow[r, "g"]& X^{(d)}\\[-1em]
            & Z \arrow[u, phantom, "\subset",sloped] \arrow[r] & Z \arrow[u, phantom, "\subset",sloped]
        \end{tikzcd}
    \end{equation}
    where $\pi$ is the quotient morphism, $Z$ is the scheme theoretic image of $Y^{(d)} \longrightarrow X^{(d)}$,
    and $g$ is the induced surjective morphism.
    Let $L$ be any Galois extension of $K$ of degree $d$ with Galois group
    $G = \{\s_1, \dots ,\s_d\}$.
    For any point $x \in X(L)$, we define $\widetilde{x} \in X^d(L)$ by
    \begin{equation}
        \begin{tikzcd}
            \Spec L \arrow[r,"\widetilde{x}"] \arrow[d,"\s_i^*"] & X^d \arrow[d,"\pr_i"] \\
            \Spec L \arrow[r,"x"]& X
        \end{tikzcd}
    \end{equation}
    for all $i=1,\dots,d$.
    Then we have $\pi \circ \widetilde{x}$ factors through $\Spec K$ and 
    defines a $K$-point $\overline{x} \in X^{(d)}(K)$.
    It is easy to see that
    \begin{align}
        x \in Y(L) \iff \overline{x} \in Z(K).
    \end{align}

    Now let $s_0 \geq 0$ be as in \cref{que:PIQ} for $(X^{(d)}, g, Z)$.
    If $x \in X(L)$ satisfies $f^s(x) \in Y(L)$ for some $s \geq 0$, 
    then $\overline{f^s(x)} \in Z(K)$.
    Since $\overline{f^s(x)} = g^s(\overline{x})$,
    we have $g^{s_0}(\overline{x}) \in Z(K)$ and therefore $f^{s_0}(x) \in Y(L)$.
    Since $s_0$ depends only on $X^{(d)}, g, Z$, and $K$, we are done.
    
\end{proof}

\begin{remark}\label{rem:fingrpqt_is_proj}
    To see $X^{(d)}$ is projective, it is enough to prove that
    $X^{(d)}_{\overline{K}} = X_{\overline{K}}^d / \mathfrak{S}_d$ is projective.
    In general, finite group action on a projective variety over an algebraically closed field 
    is linearizable by a very ample line bundle (cf.\ \cite[Proposition 3.4.5]{Br18}).
    Thus, the standard construction of quotient in GIT shows that
    the quotient $X_{\overline{K}}^d / \mathfrak{S}_d$ is projective (cf.\ \cite[Proposition 8.1, Exercise 8.5]{Dol03}).
\end{remark}

\section{Preimages Question for a fixed point}\label{sec:Preimages Question for a fixed point}

The simplest case of \cref{que:PIQ} is when the invariant subset $Y$
consists of a single rational point $y \in X(K)$.
If, for example, $f$ is polarized, it is easy to see that 
the set $\bigcup_{s \geq 0}f^{-s}(y)(\overline{K})$ is a set of bounded height and
hence the answer to \cref{que:PIQ}, as well as \cref{que:PIQ_fin_ext}, is affirmative.
In general, we cannot use such a height argument, but the answer is still affirmative.

\begin{proposition}\label{prop:PIQ_fixedpoint}
    Let $K$ be a number field.
    Let $X$ be a projective variety over $K$ and $f \colon X \longrightarrow X$
    a surjective morphism.
    Let $Y \subset  X$ be a $0$-dimensional $f$-invariant closed subscheme.
    Then the answer to \cref{que:PIQ_fin_ext} is affirmative, i.e.\ 
    for $d \in \Z_{\geq 1}$, there is $s_0 \in \Z_{\geq 0}$ such that
    for all $s \geq s_0$, we have
    \begin{align}
        \left(f^{-s-1}(Y) \smallsetminus f^{-s}(Y) \right)(L) =  \emptyset
    \end{align}
    for all field extensions $K \subset L$ with $[L:K] \leq d$.
\end{proposition}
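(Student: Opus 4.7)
My plan is to reduce the finite-extension version to the no-extension \cref{que:PIQ} in the $0$-dimensional case, and then prove the latter by specializing at a prime of good reduction.

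\textbf{Reduction step.} I would adapt the symmetric-product construction from the proof of \cref{prop:twoPIQequiv}: given $(K, X, f, Y, d)$ with $\dim Y = 0$, I pass to the triple $(X^{(d)}, g, Z)$ on the symmetric product (projective by \cref{rem:fingrpqt_is_proj}), where $g$ is the induced endomorphism and $Z$ is the scheme-theoretic image of $Y^{(d)}$. Crucially $\dim Z = 0$ because $\dim Y = 0$, so the Galois-orbit calculation of \cref{prop:twoPIQequiv} translates the bound for $(X^{(d)}, g, Z)$ directly into the finite-extension bound for $(X, f, Y)$. Hence it suffices to establish \cref{que:PIQ} in the $0$-dimensional case.

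\textbf{Simplifications for \cref{que:PIQ}.} Since $f \colon X \to X$ is a surjective endomorphism of a projective variety, $f$ is finite, so $f^{-s}(Y)$ remains $0$-dimensional for every $s$ and $f$ restricts to a self-map of the finite set $Y(\overline{K})$. Replacing $f$ by an appropriate iterate (harmless for PIQ, by the remark preceding the proposition) and enlarging $K$ to a finite extension (also harmless, since the statement over a larger field implies it over $K$), I may assume every point of $Y(\overline{K})$ is a $K$-rational fixed point of $f$. Splitting $Y$ into its finitely many $K$-rational fixed points reduces the problem to showing, for a single $K$-rational fixed point $y$ of $f$, that the set
\begin{align*}
    T(K) := \{x \in X(K) : f^s(x) = y \text{ for some } s \geq 0\}
\end{align*}
is finite.

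\textbf{Specialization at a good prime.} To establish finiteness of $T(K)$ I would choose a prime $\p$ of $\O_K$ of good reduction for $(X, f, y)$ and let $(\X, \mathcal{F}, \bar y)$ be an integral model over $\O_{K,(\p)}$. The reduction map $X(K) \hookrightarrow X(K_\p) \to \X(\F_\p)$ is $f$-equivariant, and $\X(\F_\p)$ is finite, so every $\bar x$ coming from $T(K)$ has tail length at most $|\X(\F_\p)|$ in the finite $\mathcal{F}$-orbit structure on $\X(\F_\p)$. Iterating $f$ by this uniformly bounded amount reduces us to the case where $x \in T(K)$ has reduction $\bar y$, i.e.\ $x$ lies in the residue disc $D_{\bar y} \subset X(K_\p)$ containing $y$.

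\textbf{Main obstacle: the local analytic step.} The hard part will be to show that $T(K) \cap D_{\bar y}$ is finite. The plan is to choose $\p$ so that the algebraic eigenvalues of $df_y$ (elements of $\overline{K}$) all have $\p$-adic absolute value $\leq 1$, a condition which holds for all but finitely many $\p$. When $f$ is furthermore \'etale at $y$ (so that $|\det df_y|_\p = 1$ for almost all $\p$), the map $f|_{D_{\bar y}}$ becomes an analytic automorphism of the disc, forcing $y$ to be the unique preimage of itself inside $D_{\bar y}$ under every $f^s$, hence $T(K) \cap D_{\bar y} = \{y\}$. The principal remaining difficulty is the ramified case $\det df_y = 0$, where no prime makes $f$ \'etale at $\bar y$ and the local map $f|_{D_{\bar y}}$ has positive-degree ramification at $y$; here one must prevent $K$-rational backward-orbit points from accumulating at $y$ in the $p$-adic topology by a more refined analytic argument on the polydisc, exploiting that $K$-rational (rather than merely $K_\p$-rational) candidates are highly constrained.
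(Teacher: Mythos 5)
Your reductions are fine: the symmetric-product trick to pass from degree-$\le d$ points to $K$-points (parallel to \cref{prop:twoPIQequiv}, and legitimate here since $\dim Z=0$), the passage to an iterate and a finite extension so that $Y$ is a single $K$-rational fixed point $y$, and the observation that \cref{que:PIQ} for $(X,f,\{y\})$ amounts to finiteness of $T(K)$. The genuine gap is the final, decisive step. Your single-prime strategy only handles the case $\det df_y\neq 0$: there, for almost all $\p$ the map is an analytic bijection of the residue disc and $T(K)\cap D_{\bar y}=\{y\}$. In the ramified case $\det df_y=0$ you explicitly defer to ``a more refined analytic argument on the polydisc,'' and no such argument is given; nor is it clear one exists at this level of generality. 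Controlling rational preimages near a ramified fixed point at a single place is precisely the hard analytic content of this circle of problems -- it is what forces the machinery of \cref{sec:Preliminaries on polydisc algebras,sec:padic uniformization,sec:piq P1P1} even in the two-dimensional case of \cref{thm:piq_P1P1} -- so as written your proposal proves \cref{prop:PIQ_fixedpoint} only under the extra hypothesis that $f$ is unramified at the points of $Y$.

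The paper's proof sidesteps local dynamics entirely by a global argument. Assuming the conclusion fails, the sets $A_{s}$ of points of $f^{-s}(y)\smallsetminus f^{-s+1}(y)$ of residue degree $\le d$ are nonempty finite sets with $f(A_{s+1})\subset A_s$, so an inverse limit yields a coherent backward orbit $(x_s)_{s\ge 0}$ with $f(x_{s+1})=x_s$, $x_0=y$, all defined over one finite extension of $K$ (the residue fields $\k(x_s)$ increase but have degree $\le d$). Spreading out over $\O_{K,S}$ and reducing at \emph{every} place $v\notin S$, the finiteness of $X_{\F_v}(\F_v)$ and the fixedness of $x_0(v)$ give $x_m(v)=f_{\F_v}^{n_v}(x_{m+n_v}(v))=x_0(v)$ for all $m$; since this holds at all $v\notin S$, one gets $x_m=y$ for every $m$, contradicting $x_m\notin f^{-(m-1)}(y)$. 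The key structural difference from your plan is that the paper applies the reduction argument at all places simultaneously to a single inverse-limit point, rather than at one fixed place to all of $T(K)$; this is exactly what makes the ramified case invisible. To complete your proposal you would need to replace the missing polydisc analysis by this (or an equivalent) global mechanism.
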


\begin{proof}
    By taking base change to a finite extension of $K$,
    we may assume $Y$ is a finite set of $K$-rational points.
    Then, replacing $f$ with an iterate, we may further assume that
    $Y = \{y\}$ where $y \in X(K)$ is a fixed point of $f$.
    
    Suppose for any $s_0 \geq 0$, there are $s \geq s_0$ and $K \subset L$ with $[L:K] \leq d$
    such that
    \begin{align}
        \left(f^{-s-1}(y) \smallsetminus f^{-s}(y) \right)(L) \neq \emptyset.
    \end{align}
    Then for all $s \geq 0$, 
    \begin{align}
        A_{s+1} := \{ z \in f^{-s-1}(y) \smallsetminus f^{-s}(y) \mid [\k(z) : K] \leq d  \} \neq \emptyset
    \end{align}
    where $\k(z)$ is the residue field of $z$.
    We set $A_0=\{y\}$.
    If $z \in A_{s+1}$, then $f(z) \in f^{-s}(y) \smallsetminus f^{-(s-1)}(y)$
    and $[\k(f(z)) : K] \leq [\k(z) : K] \leq d$.
    Thus we have $f(A_{s+1}) \subset A_s$.
    Thus we can take a $(x_s)_{s \geq 0} \in \varprojlim A_s$.
    By replacing $K$ with a finite extension, we may regard $x_s$ as $K$-rational points of $X$
    for all $s$. Note that $x_0=y$.

    Let us take a model $\X$ and $\widetilde{f}$ of $X$ and $f$ 
    over the ring of $S$-integers $\O_{K,S}$ of $K$.
    For a finite place $v \not\in S$, $\F_v$ denotes the residue field of $\O_{K,S}$ at $v$,
    and we write $X_{\F_v} = \X \times_{\O_{K,S}} \F_v$, 
    $f_{\F_v} = \widetilde{f} \times_{\O_{K,S}} \F_v$.
    Every $x_s$ corresponds to $\widetilde{x_s} \in \X(\O_{K,S})$.
    Its reduction to $\F_v$ is denoted by $x_s(v)$.
    \begin{equation}
        \begin{tikzcd}
            &[-3em] & \widetilde{f} \arrow[d,phantom,"\uacts"] & &[-3em] \\[-1em]
            f \arrow[r,phantom,"\acts"] & X \arrow[d] \arrow[r] & \X \arrow[d] & X_{\F_v} \arrow[l] \arrow[d] & f_{\F_v} \arrow[l,phantom, "\racts"]\\
            &\Spec K \arrow[r] \arrow[u,bend left=3em, "x_s",pos=0.45] & \Spec \O_{K,S} \arrow[u,bend left=3em, "\widetilde{x_s}",pos=0.45] & \Spec \F_v \arrow[l] \arrow[u,bend right=3em, "x_s(v)",swap,pos=0.5] &
        \end{tikzcd}
    \end{equation}
    Since $X_{\F_v}(\F_v)$ is a finite set and $f_{\F_v}(x_0(v))=x_0(v)$,
    there is $n_v \geq 0$ with the following property:
    any point $a \in X_{\F_v}(\F_v)$ with $f_{\F_v}^n(a) = x_0(v)$
    for some $n\geq 0$, we have $f_{\F_v}^{n_v}(a) = x_0(v)$.
    Since $f_{\F_v}^n(x_n(v))=x_0(v)$ for all $n \geq 0$,
    we have for all $m \geq 0$
    \begin{align}
         x_m(v) = f_{\F_v}^{n_v}(x_{m+n_v}(v)) = x_0(v).
    \end{align}
    Since this holds for all $v \notin S$, we get $x_m = x_0$ for all $m \geq 0$.
    This contradicts to $x_m \notin f^{-(m-1)}(y)$.
\end{proof}

\section{Preimages Question when the subvariety is not invariant}\label{sec:Preimages Question when the subvariety is not invariant}

Let $f \colon X \longrightarrow X$ be a surjective morphism on a projective variety.
Consider an arbitrary subvariety $Y \subset X$, not necessarily $f$-invariant.
One might expect that the preimages $f^{-s}(Y)$ are typically geometrically 
complicated and admit few rational points.
Stemming from this intuition, the following question 
(PIQ without the constraint of $Y$ being invariant) naturally arises.

\begin{question}\label{que:PIQ_non_invariant}
    Let $K$ be a number field.
    Let $X$ be a projective variety over $K$ and 
    $f \colon X \longrightarrow X$ a surjective morphism over $K$.
    Let $Y \subset X$ be a closed subscheme.
    Then is there $s_0 \in \Z_{\geq 0}$ such that for all $s \geq s_0$, 
    we have
    \begin{align}
        \biggl(f^{-s-1}(Y) \smallsetminus \bigcup_{0 \leq i \leq s} f^{-i}(Y) \biggr)(K) = \emptyset\ ?
    \end{align}
\end{question}

There are situations where the answer is negative by trivial reason, e.g., \ 
$f$ is an automorphism and $Y$ is a single rational point with infinite $f$-orbit.
It was somewhat surprising that
the question has a negative answer even if $f$ is polarized.
The following is a recipe for constructing counterexamples.

\begin{proposition}\label{prop:gpiq_ce_general_recipe}
    Let $K$ be a number field and
    $X$ a projective variety over $K$.
    Let $f,g \colon X \longrightarrow X$ be surjective morphisms over $K$
    and $Y \subset X$ be a closed subvariety.
    Suppose 
    \begin{enumerate}
        \item $f \circ g = g \circ f$;
        \item $f(g(Y))=Y$;
        \item $Y \cap g^n(Y) \subsetneq Y$ for all $n \geq 1$;
        \item $Y(K)$ is Zariski dense in $Y$.
    \end{enumerate}
    Then the answer to \cref{que:PIQ_non_invariant} for $X, f, Y$ 
    is negative.
\end{proposition}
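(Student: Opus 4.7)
The plan is to exhibit, for arbitrarily large $n$, a point $x_n \in X(K)$ lying in $f^{-n}(Y)$ but in none of $f^{-i}(Y)$ for $0\le i\le n-1$. A natural candidate is $x_n := g^n(y)$ for a carefully chosen $y\in Y(K)$: by commutativity $f^n\circ g^n=(fg)^n$, and by hypothesis (2) we have $(fg)(Y)=Y$, so $(fg)^n(Y)=Y$ and therefore $f^n(x_n)=(fg)^n(y)\in Y$ automatically. The real task is to arrange that $f^i(x_n)\notin Y$ for $i<n$. Again using commutativity, $f^i g^n=g^{n-i}(fg)^i$, so the required condition is
\begin{equation}
    g^{n-i}(\phi^i(y))\notin Y\qquad\text{for }i=0,\dots,n-1,
\end{equation}
where $\phi:=(fg)|_Y\colon Y\to Y$ is a surjective self-morphism.

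The key preliminary step, and the one I expect to be the main obstacle because hypothesis (3) is phrased as $Y\not\subseteq g^n(Y)$ rather than the superficially different $g^n(Y)\not\subseteq Y$, is to prove
\begin{equation}
    g^m(Y)\not\subseteq Y\qquad \text{for every } m\ge 1.
\end{equation}
The bridge is a dimension count. From $f^m(g^m(Y))=(fg)^m(Y)=Y$ we get $\dim g^m(Y)\ge \dim Y$, while $\dim g^m(Y)\le \dim Y$ holds for trivial reasons, so $\dim g^m(Y)=\dim Y$. Since $Y$ is an irreducible subvariety and $g^m(Y)$ is irreducible of the same dimension, $g^m(Y)\subseteq Y$ would force $g^m(Y)=Y$, hence $Y\cap g^m(Y)=Y$, contradicting (3).

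Granting this, for each $m\ge 1$ the closed subset $\{y'\in Y:g^m(y')\in Y\}=Y\cap (g^m)^{-1}(Y)$ is properly contained in $Y$, so its complement $U_m\subset Y$ is a nonempty Zariski open subset. Because $\phi^i\colon Y\to Y$ is surjective (iterate of a surjection), each $(\phi^i)^{-1}(U_{n-i})$ is a nonempty open subset of $Y$; since $Y$ is irreducible, the finite intersection
\begin{equation}
    V_n:=\bigcap_{i=0}^{n-1}(\phi^i)^{-1}(U_{n-i})
\end{equation}
is still a nonempty open subset of $Y$. Hypothesis (4), the Zariski density of $Y(K)$ in $Y$, then produces $y\in V_n(K)$.

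Setting $x_n:=g^n(y)\in X(K)$, we verify the two conclusions. First, $f^n(x_n)=(fg)^n(y)=\phi^n(y)\in Y$ because $\phi\colon Y\to Y$. Second, for $0\le i\le n-1$, the commutation relation $f^i g^n=g^{n-i}(fg)^i$ gives $f^i(x_n)=g^{n-i}(\phi^i(y))$; and $y\in (\phi^i)^{-1}(U_{n-i})$ means exactly $\phi^i(y)\in U_{n-i}$, i.e.\ $g^{n-i}(\phi^i(y))\notin Y$. Therefore $x_n\in \bigl(f^{-n}(Y)\smallsetminus \bigcup_{0\le i\le n-1}f^{-i}(Y)\bigr)(K)$. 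Since $n$ can be chosen arbitrarily large, no $s_0$ can work in \cref{que:PIQ_non_invariant}, proving the answer is negative.
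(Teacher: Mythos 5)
Your proof is correct, and it follows the paper's overall strategy: use commutativity in the form $f^i\circ g^n=g^{n-i}\circ(f\circ g)^i$ so that $g^n(Y)\subset f^{-n}(Y)$, then use irreducibility of $Y$ and density of $Y(K)$ to find a rational point escaping $\bigcup_{0\le i\le n-1}f^{-i}(Y)$. Where you genuinely diverge is in the non-containment step. The paper works downstream: a point of $g^{s+1}(Y)$ lying in $f^{-i}(Y)$ must lie in $f^{-i}\bigl(Y\cap g^{s+1-i}(Y)\bigr)$, and the paper rules out containment of $g^{s+1}(Y)$ in this set by a dimension count that invokes the finiteness of $f^i$ and $g^{s+1}$ (a true but not-proved-there fact about surjective endomorphisms of projective varieties). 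You work upstream on $Y$: you first convert hypothesis (3), which literally says $Y\not\subseteq g^m(Y)$, into $g^m(Y)\not\subseteq Y$ via the equality $\dim g^m(Y)=\dim Y$ forced by the surjection $f^m(g^m(Y))=(f\circ g)^m(Y)=Y$ --- this bridge is necessary, since the two non-containments are not formally the same, and your argument for it is valid because $g^m(Y)$ is closed and irreducible --- and then you pull the nonempty opens $U_{n-i}\subset Y$ back along the surjective restriction $\phi^i$ of $(f\circ g)^i$ to $Y$ before choosing $y\in V_n\cap Y(K)$. Your route buys the elimination of any finiteness hypothesis on $f$ and $g$: the only dimension input you need comes from hypothesis (2). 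Indeed, applying $f^i$ shows the paper's claimed non-containment $g^{s+1}(Y)\not\subseteq f^{-i}\bigl(Y\cap g^{s+1-i}(Y)\bigr)$ is equivalent to your statement $g^{s+1-i}(Y)\not\subseteq Y$, so your lemma could be substituted into the paper's proof to remove the appeal to finiteness. The cost is only the extra bridging lemma and the slightly more elaborate generic choice of $y$, both of which you carry out correctly.
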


\begin{proof}
    For all integers $n \geq m \geq 0$, we have
    \begin{align}
        f^m(g^n(Y)) = g^{n-m} \left((f\circ g)^m(Y)\right) = g^{n-m}(Y).
    \end{align}
    Thus for all $s \geq 0$, we have
    \begin{align}\label{ineq:rat_point_preimages_ce}
        &\biggl(f^{-s-1}(Y) \smallsetminus \bigcup_{0 \leq i \leq s} f^{-i}(Y) \biggr)(K)\\
        &\supset
        \biggl(g^{s+1}(Y) \smallsetminus \bigcup_{0 \leq i \leq s} f^{-i}(Y \cap g^{s+1-i}(Y)) \biggr) (K).
    \end{align}
    By assumption, $Y \cap g^{s+1-i}(Y)$ is a proper closed subset of $Y$.
    Since $Y$ is irreducible and $f^i, g^{s+1}$ are finite, $f^{-i}(Y \cap g^{s+1-i}(Y))$ does not contain $g^{s+1}(Y)$.
    Since $g^{s+1}(Y)(K)$ is Zariski dense in $g^{s+1}(Y)$,
    the right hand side of \cref{ineq:rat_point_preimages_ce} is non-empty.
\end{proof}

\begin{example}\label{ex:ce_to_gpiq_elliptic}
    There is a polarized endomorphism for which \cref{que:PIQ_non_invariant}
    is not true.
    Let $E$ be an elliptic curve over $\Q(i)$ such that
    $E_{\C} \simeq \C/(\Z + i\Z)$ 
    (cf.\ \cite[II, Example 1.3.1]{Silbook94} or \cite[I, Proposition 4.5]{Silbook94}).
    The endomorphisms defined by multiplication by $1+2i$ and $1-2i$ on $E_{\C}$ is defined over $\Q(i)$ (cf.\ \cite[II, Theorem 2.2 (b)]{Silbook94}). 
    Let $\f$, $\psi$ denote those endomorphisms on $E$ respectively.
    Then we have
    \begin{align}
        &\f \circ \psi = \psi \circ \f = [5]\\
        & \deg \f = \deg \psi = 5
    \end{align}
    where $[5]$ stands for the multiplication by $5$.
    We are going to apply \cref{prop:gpiq_ce_general_recipe}.
    Set $X = E \times E$, $f = \f \times \psi$, $g = \psi \times \f$,
    and $Y = \Delta \subset X$, the diagonal subvariety.
    Then, it is easy to see that
    \begin{align}
        &f \circ g = g \circ f\\
        &f(g(Y)) = Y.
    \end{align}
    There is a finite extension $K$ of $\Q(i)$ such that $Y(K) = E(K)$
    is infinite. We then replace everything with the base change to $K$.
    So far, we confirmed conditions (1)(2)(4).
    To see (3), it is enough to show that $\psi^n \neq \f^n$
    for all $n \geq 1$. 
    This follows from the fact that
    \begin{align}
        \frac{1+2i}{1-2i}
    \end{align}
    is not a root of $1$ as the unit group of $\Q(i)$
    is generated by $i$.
    We have proven that the morphism 
    $f \colon E \times E \longrightarrow E \times E$
    and the diagonal $\Delta \subset E \times E$ give a counterexample to
    \cref{que:PIQ_non_invariant}.
    Note that $f$ is polarized and \'etale.
\end{example}


\begin{example}
    In \cref{ex:ce_to_gpiq_elliptic}, $\f$ and $\psi$ commute with
    $[-1] \colon E \longrightarrow E$.
    Hence they induce endomorphisms on $\P^1_K = E/ \langle [-1] \rangle$.
    Let $\widetilde{\f}, \widetilde{\psi} \colon \P^1_K \longrightarrow \P^1_K$ be the induced endomorphisms.
    They commute and $\deg \widetilde{\f} = \deg \widetilde{\psi} = 5$.
    Set $X = \P^1_K \times \P^1_K$, $f = \widetilde{\f} \times \widetilde{\psi}$, $g =  \widetilde{\psi} \times \widetilde{\f}$,
    and $Y = \Delta \subset X$, the diagonal subvariety.
    Then we have
    \begin{align}
        f \circ g = g \circ f,\ 
        f(g(Y)) = Y,\ 
        \text{$Y(K)$ is Zariski dense in $Y$}.
    \end{align}
    Moreover, 
    \begin{align}
        g^n(Y) \subset Y \iff  \f^n = \pm\psi^n
        \iff \left(\frac{1+2i}{1-2i}\right)^n = \pm1
    \end{align}
    and hence $g^n(Y) \not\subset Y$ for $n \geq 1$.
    Therefore, by \cref{prop:gpiq_ce_general_recipe},
    $f \colon \P^1_K \times \P^1_K \longrightarrow \P^1_K \times \P^1_K$
    and $\Delta \subset \P^1_K \times \P^1_K$
    give a counterexample to \cref{que:PIQ_non_invariant}.
    Note that $f$ is polarized.
\end{example}

The above construction of counterexamples uses two mutually commutative 
self-morphisms.
Such a situation is somewhat special, and it may be reasonable to expect 
\cref{que:PIQ_non_invariant} holds for ``general" self-morphisms.
The following is one attempt in this direction.

\begin{theorem}\label{thm:GPIQ_P1P1}
    Let $K$ be a number field.
    Let $f,g \colon \P^1_K \longrightarrow \P^1_K$ be polynomial maps of the same degree $d \geq 2$.
    Then the answer to \cref{que:PIQ_non_invariant} for $f \times g \colon \P^1_K \times \P^1_K \longrightarrow \P^1_K \times \P^1_K$ and the diagonal subvariety
    $\Delta \subset \P^1_K \times \P^1_K$ is affirmative, i.e.\ 
    there is $s_0\in \Z_{\geq 0}$ such that for all $s \geq s_0$, we have
    \begin{align}
        \biggl((f\times g)^{-s-1}(\Delta) \smallsetminus \bigcup_{0 \leq i \leq s} (f \times g)^{-i}(\Delta) \biggr)(K) = \emptyset.
    \end{align}
\end{theorem}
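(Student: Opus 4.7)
The plan is to suppose for contradiction that $A_s := ((f \times g)^{-s-1}(\Delta) \setminus \bigcup_{i \le s}(f \times g)^{-i}(\Delta))(K)$ is nonempty for infinitely many $s$, pick $(x_s, y_s) \in A_s$, and split on the size of $h(x_s)$. When $f = g$ the diagonal is already $(f \times g)$-invariant and the conclusion follows from \cref{thm:main_piq_P1P1}, so the essential case is $f \neq g$. Since $\infty$ is totally invariant under each polynomial, $(\infty,\infty) \in \Delta$ is a fixed point with trivial preimage tree, and any candidate $(x_s, y_s)$ lies in $\A^1(K) \times \A^1(K)$. Setting $z_s := f^{s+1}(x_s) = g^{s+1}(y_s) \in K$ and using that $\hat h_f - \hat h_g$ is bounded on $\P^1(\overline K)$, one obtains $\hat h_f(x_s) = \hat h_g(y_s) + O(d^{-s-1})$.

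If $h(x_s)$ stays bounded along a subsequence, then $h(y_s)$ is also bounded via the height comparison above, and Northcott forces the pairs into a finite subset of $K \times K$; since each $K$-rational pair belongs to at most one $A_s$ by the definition of ``new'' points, only finitely many $s$ can contribute, so after passing to a further subsequence one has $h(x_s), h(y_s) \to \infty$.

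In the remaining regime I would invoke the $p$-adic B\"ottcher coordinates afforded by the uniformization framework of \cref{sec:Preliminaries on polydisc algebras,sec:padic uniformization}: by the product formula there is a non-archimedean place $v$ of good reduction for $f$ and $g$ along which $|x_s|_v \to \infty$, and on the corresponding attracting basin of $\infty$ rigid-analytic B\"ottcher coordinates $\psi_{f,v}, \psi_{g,v}$ conjugate $f, g$ to $z \mapsto z^d$. The relation $f^{s+1}(x_s) = g^{s+1}(y_s) = z_s$ translates into
\[
\bigl(\psi_{f,v}(x_s)/\psi_{g,v}(y_s)\bigr)^{d^{s+1}} = \psi_{f,v}(z_s)/\psi_{g,v}(z_s) = \lambda_v \bigl(1 + O(|z_s|_v^{-1})\bigr),
\]
where $\lambda_v \in \C_v^\times$ depends only on the leading coefficients of $f$ and $g$. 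Extracting $d^{s+1}$-th roots shows $\psi_{f,v}(x_s)/\psi_{g,v}(y_s)$ approaches a $d^{s+1}$-th root of $\lambda_v$, and combining this at every place with the finiteness of roots of unity in $K$ confines $(x_s, y_s)$ to a zero-dimensional locus for $s$ large, contradicting distinctness. The main obstacle will be precisely this globalization step -- rigorously converting the asymptotic $v$-adic B\"ottcher estimate into a uniform algebraic constraint simultaneously at all places -- which is where the polydisc-algebra formalism of the preceding sections is essential.
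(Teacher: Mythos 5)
Your sketch handles only the easy half of the problem and the remaining steps have genuine gaps. First, the place-selection step fails: from $h(x_s)\to\infty$ you cannot conclude that there is a single non-archimedean place $v$ of good reduction with $|x_s|_v\to\infty$. The places contributing to the height may vary with $s$ (e.g.\ $x_s=1/(p_1p_2\cdots p_s)$ has height tending to infinity while $|x_s|_v$ is bounded at every fixed finite place), the escape may occur only at archimedean places or at places of bad reduction, and even when $|x_s|_v>1$ at a good place you still need both $x_s$ and $y_s$ to lie in the $v$-adic basin of infinity for the B\"ottcher coordinates to be defined at them. Second, the endgame is not a contradiction as stated: the locus cut out by ``$\psi_{f,v}(x_s)/\psi_{g,v}(y_s)$ is close to a $d^{s+1}$-th root of $\lambda_v$'' depends on $s$, so a zero-dimensional (even finite) such locus can perfectly well contain one new point for each $s$; you would need a finiteness statement uniform in $s$, and you yourself flag that this globalization is missing. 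Note also that the polydisc-algebra and uniformization material of \cref{sec:Preliminaries on polydisc algebras,sec:padic uniformization} is used in the paper only for the invariant case (\cref{thm:piq_P1P1}), after a specialization argument reduces everything to a single $p$-adic model; it does not supply the all-places bookkeeping your outline requires. Your correct preliminary observations (the reduction of the case $f=g$ to \cref{thm:main_piq_P1P1}, the fact that candidate points lie in $\A^1\times\A^1$, the estimate $\hh_f(x_s)=\hh_g(y_s)+O(d^{-s-1})$, and Northcott in the bounded-height regime) dispose only of the trivial part of the argument.

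The paper's actual route is quite different and entirely geometric: assuming failure, it shows each set of new $K$-points is infinite, organizes them into an inverse system of irreducible components $C_m$ of $(f\times g)^{-m}(\Delta)$ with $\#C_m(K)=\infty$, proves $\deg C_m\to\infty$ using the canonical height of subvarieties for an equivariant embedding (Hutz), and then, via \cref{lem:etale_over_infty} (the normalization of any component of $(P\times Q)^{-1}(\Delta)$ is \'etale over $\infty$ --- this is exactly where the polynomial hypothesis enters) combined with the Riemann--Hurwitz-type bound of \cite[Proposition 4.1]{BMS23}, shows that the genus of $C_m$ is at least $2$ for $m$ large, contradicting Faltings' theorem. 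If you want to salvage a B\"ottcher-coordinate proof, you would need to replace your pointwise asymptotics by some uniform algebraic or geometric constraint on the curves $(f\times g)^{-s-1}(\Delta)$ themselves, which is in effect what the paper's genus-growth argument accomplishes.
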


The proof of this theorem is similar to the proof of
\cite[Theorem 1.6]{BMS23}.
We use the following straightforward generalization of
\cite[Lemma 4.3]{BMS23}.

\begin{lemma}\label{lem:etale_over_infty}
    Let $k$ be an algebraically closed field of characteristic zero.
    Let $P,Q \in k[x]$ be non-constant polynomials 
    such that $\deg Q \geq 2$ and $\deg Q \mid \deg P$.
    We use the same symbols $P, Q$ to denote the self-morphisms on $\P^1_k$ defined by $P, Q$.
    Consider the following diagram
    \begin{equation}
        \begin{tikzcd}
            \widetilde{X} \arrow[d,"\nu"] \arrow[ddd, "\pi_1", bend right=4em, swap] & \\
            X \arrow[r] \arrow[d,phantom, "\subset", sloped] & \Delta \arrow[d,phantom, "\subset", sloped] \\[-1em]
            \P^1_k \times \P^1_k \arrow[r,"P\times Q"] \arrow[d,"\pr_1"] & \P^1_k \times \P^1_k \arrow[d,"\pr_1"]\\
            \P^1_k \arrow[r,"P"] & \P^1_k
        \end{tikzcd}
    \end{equation}
    where  $\Delta$ is the diagonal subvariety,
    $X$ is an arbitrary irreducible component of $(P \times Q)^{-1}(\Delta)$ with reduced structure,
    $\nu$ is the normalization of $X$, and $\pi_1$ is the composite of $\nu$ and the first projection $\pr_1$.
    Then $\pi_1$ is \'etale over $\infty$.
\end{lemma}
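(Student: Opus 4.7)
The plan is to work in formal/analytic coordinates at $(\infty,\infty) \in \P^1_k \times \P^1_k$ and explicitly describe the formal branches of $(P\times Q)^{-1}(\Delta)$ over $\infty$. Since $P$ and $Q$ are polynomials, both $\infty$'s are totally ramified preimages of $\infty$, and one checks set-theoretically that $\pi_1^{-1}(\infty) \subset \{(\infty,\infty)\}$: indeed a point $(x,y)$ with $P(x) = Q(y)$ and $x = \infty$ forces $Q(y) = \infty$, hence $y = \infty$. So the only question is the shape of $X$ at the single point $(\infty,\infty)$.

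Introduce local coordinates $u = 1/x$ and $v = 1/y$ at $\infty$, and corresponding coordinates $s, t$ at $\infty$ in the target. Writing $p = \deg P$, $q = \deg Q$, standard computation shows
\begin{align}
    s \circ (P \times Q) = u^p g(u), \qquad t \circ (P \times Q) = v^q h(v),
\end{align}
where $g \in k\lb u \rb$ and $h \in k\lb v \rb$ are units (with $g(0) = 1/\mathrm{lead}(P)$, $h(0) = 1/\mathrm{lead}(Q)$). Since the diagonal $\Delta$ is cut out by $s - t$ in a neighborhood of $(\infty,\infty)$, the formal germ of $(P \times Q)^{-1}(\Delta)$ at $(\infty,\infty)$ is defined by
\begin{align}
    u^p g(u) - v^q h(v) = 0.
\end{align}
Using the divisibility hypothesis $p = mq$ and the fact that $k$ has characteristic zero and is algebraically closed, the units $g$ and $h$ admit $q$-th roots $G \in k\lb u \rb$, $H \in k\lb v \rb$, and the equation factors as
\begin{align}
    \bigl(u^m G(u)\bigr)^q - \bigl(v H(v)\bigr)^q = \prod_{\zeta^q = 1} \bigl(u^m G(u) - \zeta\, v H(v)\bigr) = 0.
\end{align}

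Each of the $q$ formal factors defines a smooth branch at $(\infty,\infty)$: since $H(0) \neq 0$, the implicit function theorem applied to $\zeta v H(v) = u^m G(u)$ solves $v$ as a formal power series $v = v_\zeta(u) \in u^m\, k\lb u \rb^\times$, so $u$ is a uniformizer on the branch. Therefore every point of $\nu^{-1}(\infty,\infty)$ in $\widetilde{X}$, which by the universal property of normalization corresponds to exactly one of these branches lying in the chosen component, has local ring $k\lb u \rb$ with $\pi_1^\ast u = u$. This is the condition for $\pi_1$ to be étale at that point, and since these are all the points of $\widetilde{X}$ over $\infty$, the morphism $\pi_1$ is étale over $\infty$.

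The only mildly delicate step is the factorization, which needs the hypothesis $q \mid p$ (so that $u^p g(u)$ can be rewritten as a $q$-th power up to the unit $G$) together with characteristic zero (to extract $q$-th roots of units in $k\lb u \rb$); without $q \mid p$ the formal germ would generally have a non-smooth branch at $(\infty,\infty)$ and the conclusion would fail.
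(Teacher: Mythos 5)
Your argument is correct, and it takes a genuinely different route from the paper's. You work directly with the formal germ of $(P\times Q)^{-1}(\Delta)$ at $(\infty,\infty)$: writing the map in the coordinates $u,v$ as $u^p g(u)$, $v^q h(v)$ with $g,h$ units, you use $q\mid p$ and characteristic zero to extract $q$-th roots and factor the local equation as $\prod_{\zeta^q=1}\bigl(u^mG(u)-\zeta vH(v)\bigr)$, so that every branch through $(\infty,\infty)$ is formally smooth with $u$ as a parameter, and the induced map $k\lb u\rb\to k\lb u\rb$ at each point of $\widetilde X$ over $\infty$ is an isomorphism. The paper instead argues by contradiction on the normalization: assuming $\pi_1$ has ramification index $e\geq 2$ at a point over $\infty$, it compares ramification indices to get $e_{\pi_2}=de$, chooses a uniformizer $u$ with $\pi_1^*(1/x)=u^e$, and uses the relation $P(1/\f)=Q(1/\psi)$ together with a lowest-degree-term analysis to show $\pi_2^*(1/x)$ is a power series in $u^e$; this contradicts the fact that $\f$ and $\psi$ generate the function field of $\widetilde X$, since then every rational function of them has valuation divisible by $e$. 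Both proofs rest on the same ingredients (formal expansion at infinity plus the divisibility $\deg Q\mid\deg P$ and characteristic zero), but yours is direct and constructive, actually exhibiting the $q$ smooth branches and hence also the local structure of the fibre over $\infty$, whereas the paper's avoids the explicit factorization and the branch analysis at the cost of an indirect argument. The only point you should phrase more carefully is the passage from formal branches to points of the normalization: this is not literally the universal property of normalization, but the standard fact that for an excellent reduced curve the points of $\nu^{-1}(\infty,\infty)$ correspond to the minimal primes of $\widehat{\O}_{X,(\infty,\infty)}$, with $\widehat{\O}_{\widetilde X,\tilde\a}$ the normalization of the corresponding branch (here already regular, so equal to $k\lb u\rb$); with that reference supplied, the proof is complete.
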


\begin{proof}
    The proof is a straightforward modification of the proof of \cite[Lemma 4.3]{BMS23}.
    Let $\pi_2 = \pr_2 \circ \nu$ and $y = 1/x \in \O_{\P^1_k, \infty} \subset k(\P^1_k)$.
    Suppose there is a closed point $\a \in \widetilde{X}$ such that $\pi_1(\a) = \infty$
    and $\pi_1$ is ramified at $\a$.
    Let $e = e_{\pi_1,\a} (\geq 2)$ be the ramification index at $\a$.
    (We use this notation $e_{{\rm map},{\rm point}}$ for the ramification index in the following.)
    Since $P \circ \pi_1 = Q \circ \pi_2$, we have
    $Q(\pi_2(\a)) = P(\pi_1(\a)) = P(\infty) =\infty$.
    Thus $\pi_2(\a) = \infty$.
    Moreover, we have $e_{P,\infty}e_{\pi_1, \a} = e_{Q,\infty}e_{\pi_2,\a}$ and hence 
    \begin{align}
        e_{\pi_2, \a} = \frac{\deg P}{\deg Q} e = de
    \end{align}
    where $d := \deg P / \deg Q$.

    Let 
    \begin{align}
        \f := \pi_1^* y,\ \psi := \pi_2^* y. 
    \end{align}
    Note that we have $\f, \psi \in \O_{\widetilde{X},\a}$ and
    $v_{\a}(\f) = e$, $v_{\a}(\psi) = de$. Here $v_{\a}$ is the discrete valuation of
    $\O_{\widetilde{X},\a}$.
    Note also that $\f$ and $\psi$ generate $k(\widetilde{X})$ over $k$ as a field.
    By the relation $P\circ \pi_1 = Q \circ \pi_2$, we have
    \begin{align}
        &\pi_1^*(P^*y) = \pi_2^*(Q^*y),\ \text{i.e.}\\
        &\pi_1^*(1/P(1/y)) = \pi_2^*(1/Q(1/y)),\ \text{i.e.}\\
        &P(1/\f) = Q(1/\psi). \label{eq:Pf_equal_Qpsi}
    \end{align}

    Now, by Hensel, there is a uniformizer $u \in \widehat{\O}_{\widetilde{X},\a}$
    such that $\f = u^e$.
    We identify $\widehat{\O}_{\widetilde{X},\a} = k\lb u \rb$ and write
    \begin{align}
        \frac{1}{\psi} 
        = \sum_{\substack{n \geq -de \\ e\mid n}} a_n u^n 
        + \sum_{\substack{n \geq -de \\ e \not\mid n}} a_n u^n 
        \quad (a_n \in k,\ a_{-de} \neq 0).
    \end{align}
    We set
    \begin{align}
        f = \sum_{\substack{n \geq -de \\ e\mid n}} a_n u^n, \quad 
        g = \sum_{\substack{n \geq -de \\ e \not\mid n}} a_n u^n.
    \end{align}
    We claim $g = 0$.
    Indeed, assume $g \neq 0$ and let $m_0$ be the degree of the lowest degree term of $g$.
    By the definition of $g$, we have $m_0 \geq -de + 1$ and $e \not\mid m_0$.
    By \cref{eq:Pf_equal_Qpsi}, we have
    \begin{align}
        P(1/u^e) = Q(f + g) 
        = Q(f) + \sum_{i = 1}^{\deg Q} \frac{Q^{(i)}(f)}{i!} g^i.
    \end{align}
    The lowest degree of $\frac{Q^{(i)}(f)}{i!} g^i$ is $(-de)(\deg Q - i) + im_0$.
    Since $m_0 \geq -de +1$, we have
    \begin{align}
        (-de)(\deg Q - i) + im_0 > (-de)(\deg Q - 1) + m_0
    \end{align}
    for all $i \geq 2$.
    Thus, the lowest degree of 
    \begin{align}
        \sum_{i = 1}^{\deg Q} \frac{Q^{(i)}(f)}{i!} g^i
    \end{align}
    is $ (-de)(\deg Q - 1) + m_0$, which is not divisible by $e$.
    But this is a contradiction because 
    \begin{align}
        \sum_{i = 1}^{\deg Q} \frac{Q^{(i)}(f)}{i!} g^i = P(1/u^e) - Q(f)
    \end{align}
    is a Laurent series of $u^e$. This proves $g=0$ and, in particular,
    $\psi$ is a power series of $u^e$.

    Finally, since $\f$ and $\psi$ generate $k(\widetilde{X})$, there is a rational function 
    of $F(\f, \psi)$ of $\f$ and $\psi$ with coefficient in $k$ such that $F(\f, \psi)$ is a 
    uniformizer of $\O_{\widetilde{X},\a}$.
    But $F(\f, \psi)$ is a Lauren series of $u^e$ in $\widehat{\O}_{\widetilde{X},\a} = k\lb u \rb$,
    and hence, its valuation is divisible by $e$.
    This is a contradiction, and we complete the proof.
\end{proof}

\begin{proof}[Proof of \cref{thm:GPIQ_P1P1}]
    Let $\f = f \times g$.
    By \cite[Proposition 2.1]{Fak03}, there is $\d \in \Z_{\geq 1}$ such that the closed immersion 
    $i\colon \P^1_K \times \P^1_K \hookrightarrow \P^N_K$ defined by
    $\O_{\P^1_K \times \P^1_K}(\d,\d)$ is equivariant, i.e.\ 
    there is $F \colon \P^N_K \longrightarrow \P^N_K$ such that
    $F \circ i = i \circ \f$.
    The height $h$ of subvarieties of $\P^N_K$
    and the canonical height $\hh_F$ are defined as in 
    \cite[section 4]{Hut18}.
    The restriction of $h$ and $\hh_F$ to subvarieties of $\P^1_K \times \P^1_K$
    are denoted by $h$ (abuse of notation) and $\hh_\f$.
    We refer \cite[section 4]{Hut18} or \cite[Lemma 5.1]{BMS23} 
    for the properties of $h$ and $\hh_\f$ that we need for this proof.

    Now suppose \cref{que:PIQ_non_invariant} does not hold for
    $\f$ and $\Delta$.
    Then for all $m \in \Z_{\geq 0}$, the set
    \begin{align}
        \Sigma_{m+1} := \biggl(\f^{-m-1}(\Delta) \smallsetminus \bigcup_{0 \leq i \leq m} \f^{-i}(\Delta) \biggr)(K)
    \end{align}
    are infinite.
    Indeed, if there is $m \in \Z_{\geq 0}$ such that $\Sigma_{m+1}$ is finite,
    then the set
    \begin{align}
        \Sigma := \{ x \in (\P^1 \times \P^1)(K) \mid \text{$\f^l(x) \in \Sigma_{m+1}$ for some $l \in \Z_{\geq 0}$}  \}
    \end{align}
    is finite because it is a set of bounded heights.
    But we have
    \begin{align}
        \bigcup_{n \geq m+1} \Sigma_n \subset \Sigma
    \end{align}
    and the left-hand side is an infinite set by our hypothesis. This is a contradiction.

    For all $m \in \Z_{\geq 0}$, let
    \begin{align}
        S_{m+1} := 
        \left\{ C \ \middle|\ 
        \parbox{20em}{
        $C$ is an irreducible component of $\f^{-m-1}(\Delta)$
        which is not contained in  $\bigcup_{0 \leq i \leq m}\f^{-i}(\Delta)$
        and $C(K)$ is infinite
        }
        \right\}.
    \end{align}
    We set $S_0 = \{\Delta\}$.
    By the above argument, $S_m \neq \emptyset$ for all $m \geq 0$.
    Also $\f$ maps members of $S_m$ to members of $S_{m-1}$.
    Thus, they form an inverse system of non-empty finite sets.
    Let us take an element $(C_m)_{m \geq 0} \in \varprojlim S_m$.
    By the construction, $C_0 = \Delta$, 
    $C_{m+1} \not \subset \bigcup_{0 \leq i \leq m}\f^{-i}(\Delta)$,
    $\#C_m(K) = \infty$ for all $m \geq 0$,
    and we have the following commutative diagram:
    \begin{equation}
        \begin{tikzcd}
            \cdots \arrow[r]& C_{m+1} \arrow[r] \arrow[d,phantom,"\subset",sloped] & C_m \arrow[r] \arrow[d,phantom,"\subset",sloped] & \quad\cdots\quad \arrow[r] & \Delta \arrow[d,phantom,"\subset",sloped] \\[-1em]
            \cdots \arrow[r,"\f"]& \P^1_K \times \P^1_K \arrow[r,"\f"] & \P^1_K \times \P^1_K \arrow[r,"\f"] & \quad\cdots\quad \arrow[r,"\f"] & \P^1_K \times \P^1_K
        \end{tikzcd}
    \end{equation}
    We note that $C_m$'s are geometrically integral because $C_m(K)$ is dense in $C_m$.

    Let $\deg C_m := \deg \O_{\P^1 \times \P^1}(1,1)|_{C_m}$, i.e.\ the degree of $C_m$ with respect to
    the ample line bundle $\O_{\P^1 \times \P^1}(1,1)$.
    We claim that $\lim_{m \to \infty}\deg C_m = \infty$.
    Indeed, by \cite[Theorem 4.10]{Hut18} (cf.\ \cite[Lemma 5.1]{BMS23}),
    we have 
    \begin{align}
        \frac{\hh_\f(\Delta)}{\deg \O(\d,\d)|_{\Delta}} 
        = \frac{\hh_\f(\f^m(C_m))}{\deg \O(\d,\d)|_{\f^m(C_m)}}
        = d^m \frac{\hh_\f(C_m)}{\deg \O(\d,\d)|_{C_m}}.
    \end{align}
    Here $\f^m(C_m)$ is equipped with the reduced structure.
    This implies 
    \begin{align}
        \hh_\f(C_m) = \frac{\hh_\f(\Delta)}{\deg \O(\d,\d)|_{\Delta}} \frac{\d\deg C_m}{d^m}.
    \end{align}
    In particular, if there is a sequence $0 \leq m_1 < m_2 < \cdots$
    such that $\deg C_{m_i}$ is bounded, then $\hh_\f(C_{m_i})$ is also bounded.
    This implies the set of subvarieties $\{C_{m_i} \mid i \geq 1 \}$ is a finite set
    (cf.\ \cite{Hut18} or \cite[Lemma 5.1]{BMS23}).
    But this contradicts our construction of $C_m$.

    To end the proof, we prove that the genus of the normalization of
    $C_m \times_K \overline{K}$ is at least two if $m$ is large enough.
    We get a contradiction with the fact that $\# C_m(K)= \infty$ and the Faltings' theorem.

    We replace everything with the base change to $\overline{K}$ and 
    we work over $\overline{K}$.
    Let us decompose 
    \begin{align}
        f = \a_1 \circ \b_1,\ g = \a_2 \circ \b_2
    \end{align}
    where $\a_1$ and $\a_2$ are indecomposable polynomial maps of degree at least two and 
    $\b_1$ and $\b_2$ are polynomial maps.
    Here a surjective morphism $\P^1 \longrightarrow \P^1$ is said to be indecomposable if
    the induced function field extension admits no non-trivial intermediate fields.
    Consider the following commutative diagram.
    \begin{equation}
        \begin{tikzcd}
            C_{m+1} \arrow[r] \arrow[d,phantom,"\subset",sloped]  & X \arrow[r] \arrow[dd] & C_m \arrow[d,phantom,"\subset",sloped] \arrow[r] \arrow[dd,bend right=5em, "\nu_i",swap] & \quad \cdots \quad \arrow[r]& \Delta \arrow[d,phantom,"\subset",sloped] \\[-1em]
            \P^1 \times \P^1 \arrow[d,"\pr_i"]& & \P^1 \times \P^1 \arrow[d,"\pr_i"] \arrow[r, "\f"] & \quad \cdots \quad \arrow[r, "\f"]& \P^1 \times \P^1\\
            \P^1 \arrow[r, "\b_i"] & \P^1 \arrow[r, "\a_i"] & \P^1 &
        \end{tikzcd}
    \end{equation}
    Here $i$ is $1$ or $2$, $\nu_i$ is the composition of the inclusion and the projection, and
    $X$ is the reduced scheme of the fiber product of 
    $C_m \xlongrightarrow{\nu_i} \P^1$ and $\P^1 \xlongrightarrow{\a_i} \P^1$.
    The fiber product's universal property induces the morphism $C_{m+1} \longrightarrow X$ is induced by the fiber product's universal property.
    Let $\mu \colon \widetilde{C}_m \longrightarrow C_m$ be the normalization.
    By \cref{lem:etale_over_infty}, $\nu_i \circ \mu$ is \'etale over $\infty$.
    Thus by \cite[Proposition 4.1]{BMS23}, $X$ is irreducible and
    \begin{align}\label{ineq:genus_lower_bound}
        2 g_X - 2 \geq \deg \a_i (2 g_{C_m} - 2) + (\deg \a_i - 1) \deg \nu_i
    \end{align}
    where $g_X$ and $g_{C_m}$ are the genus of the normalization of $X$ and $C_m$ respectively.
    Now note that $\deg C_m = \deg \nu_1 + \deg \nu_2$.
    Since $\deg C_m \to \infty$, we may take $m$ large enough so that
    $\deg \nu_i \geq 5$ for $i=1$ or $2$.
    Then by \cref{ineq:genus_lower_bound},
    \begin{align}
        2 g_X - 2 \geq -2 \deg \a_i + 5(\deg \a_i -1) = 3 \deg \a_i - 5 \geq 1. 
    \end{align}
    Therefore $g_{C_{m+1}} \geq g_X \geq 2$ and we are done.
\end{proof}

\section{\'Etale morphisms}\label{sec:Etale morphisms}

\begin{theorem}\label{thm:PIQ-etale}
Let $K$ be a number field.
Let $X$ be a projective variety over $K$ and 
$f \colon X \longrightarrow X$ be an \'etale morphism over $K$.
Let $Y \subset X$ be a closed subscheme such that $f|_{Y}$ factors through $Y \subset X$.
Let $d \in \Z_{\geq 1}$.
Then there exists a non-negative integer $s_{0}$ such that 
\[
(f^{-s-1}(Y)\smallsetminus f^{-s}(Y))(L)=\emptyset
\]
for all field extension $K \subset L$ with $[L:K] \leq d$ and $s\geq s_{0}$.
\end{theorem}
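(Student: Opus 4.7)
The plan is to adapt the proof of the $K$-rational point version of PIQ for \'etale morphisms established in \cite{BMS23} so that it remains uniform over all extensions $L/K$ with $[L:K] \leq d$. The overall strategy combines the symmetric-product reduction used in the proof of \cref{prop:twoPIQequiv} with a careful re-examination of the $p$-adic techniques that underlie the \'etale case, while noting a subtlety: applying \cref{prop:twoPIQequiv} blindly reduces the problem to PIQ for the induced morphism $g$ on $X^{(d)}$, but this $g$ is typically \emph{not} \'etale at the image of the big diagonal, so \cite{BMS23} cannot be invoked as a black box for $(X^{(d)}, g, Z)$.

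First I would perform the standard reductions: by base changing $K$ to a finite extension (which only multiplies the degree bound by a constant factor) and replacing $f$ with a suitable iterate, I may assume $X$ is geometrically irreducible and $f(Y) = Y$ as reduced closed subschemes. After spreading out, fix a projective model $\X$ of $X$ over $\O_{K,S}$ for a sufficiently large finite set of places $S$, together with an \'etale model $\widetilde{f}\colon \X \to \X$ of $f$ and a closed subscheme $\Y \subset \X$, flat over $\O_{K,S}$, with $\widetilde{f}(\Y) \subset \Y$.

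Assume for contradiction that no uniform $s_0$ exists. Mimicking the inverse-limit construction in the proof of \cref{prop:PIQ_fixedpoint}, I would extract a compatible sequence of closed points $(x_m)_{m \geq 0}$ of $X$ satisfying $f(x_{m+1}) = x_m$, $[\kappa(x_m) : K] \leq d$ for all $m$, together with $f^m(x_m) \in Y$ but $f^{m-1}(x_m) \notin Y$. For any $v \notin S$ and any place $w$ of $\kappa(x_m)$ above $v$, one has $[\kappa(w) : \kappa(v)] \leq d$, so the reductions $x_m(w)$ lie in the finite set $\bigcup_{[\F' : \F_v] \leq d} \X_{\F_v}(\F')$. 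A pigeonhole then yields indices $m < m'$ with $x_m(w) = x_{m'}(w)$, both mapping under $\widetilde{f}^m$ into $\Y_{\F_v}$.

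The heart of the argument is to leverage the \'etale hypothesis to promote such a reduction-theoretic coincidence into an assertion at the level of $Y$ itself. Here I would invoke the key local input behind the \'etale case of \cite{BMS23}: for an \'etale morphism, the formal completion at a reduction point is an isomorphism, so backward \'etale branches over a given residual point are uniquely determined by their reductions, and membership in the $\widetilde{f}$-invariant subscheme $\Y$ is detected by the reduction. Combining this with $f(Y) \subset Y$ and running the argument over infinitely many primes $v \notin S$ simultaneously, exactly as in the proof of \cref{prop:PIQ_fixedpoint}, should force $f^{m-1}(x_m) \in Y$ for some $m$, contradicting the exceptional choice of $x_m$. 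The main obstacle I anticipate is verifying that the local analytic estimates of \cite{BMS23} remain valid uniformly over \emph{all} completions $L_w$ with $[L_w : K_v] \leq d$; this should follow from the uniform bounds $e(w/v), f(w/v) \leq d$ together with the fact that $\widetilde{f}$ remains \'etale after base change to $\O_{L_w}$ for every such $w$, so that the inverse-function theorem applies with parameters controlled only by $d$ and the chosen model.
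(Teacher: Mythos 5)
Your proposal breaks down at the very first step of the contradiction argument. In \cref{prop:PIQ_fixedpoint} the compatible backward orbit $(x_s)_s$ exists because $Y$ is $0$-dimensional and $f$ is finite, so each set $A_{s}$ of degree-$\leq d$ points of $f^{-s-1}(Y)\smallsetminus f^{-s}(Y)$ is a nonempty \emph{finite} set and the inverse limit of such sets is nonempty. Once $\dim Y>0$ these sets are in general infinite, and an inverse limit of nonempty infinite sets along non-surjective transition maps can perfectly well be empty; you have no finiteness or compactness to invoke, so the sequence $(x_m)$ with $f(x_{m+1})=x_m$, $[\kappa(x_m):K]\leq d$, $f^m(x_m)\in Y$, $f^{m-1}(x_m)\notin Y$ need not exist. (Also, replacing $f$ by an iterate does not achieve $f(Y)=Y$.)

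Even granting such a sequence, the ``heart'' of your sketch is not a valid mechanism. Membership in $\Y$ is not detected by reduction at a prime: a section can reduce into $\Y_{\F_v}$ without its generic fibre lying on $Y$, and the fixed-point proof only globalizes because the per-prime stabilization exponent $n_v$ yields the $v$-independent conclusion $x_m(v)=x_0(v)$, which can then be tested at all $v\notin S$ simultaneously; for positive-dimensional $Y$ the conclusion at $v$ concerns $f^{n_v}(x_m)$ with $n_v$ depending on $v$, and there is no way to extract a single uniform $s_0$ from this. The actual proof uses the \'etale hypothesis in a different way and involves no local analytic or formal-completion estimates at all (so there is nothing from \cite{BMS23} to make ``uniform in $L_w$''): the key input is \cref{lem:inverse-images-by-etale}, which says that over an integral model the preimages $F^{-s}(\Y)$ of a \emph{normal} $\Y$ under the \'etale $F$ are disjoint unions of normal integral schemes, and that by extending degree-$\leq d$ points to integral points and reducing at one fixed prime one bounds, uniformly in $s$, the number of irreducible components of $f^{-s}(Y)$ carrying points of degree at most $d$. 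The theorem is then deduced by induction on $\dim Y$, passing to the normalization $\widetilde{Y_1}\subset\P^N\times X$ with $g=\id\times f$, applying the component bound there, and handling $\Sing Y_1$ and the lower-dimensional components by the inductive hypothesis. Your sketch supplies neither this component count nor a substitute for it; the one point you do get right, and which agrees with the paper's remark, is that the symmetric-product reduction of \cref{prop:twoPIQequiv} cannot be combined with \cite{BMS23} because the induced morphism on $X^{(d)}$ need not be \'etale.
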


\begin{remark}
    Let us remark that \cref{thm:PIQ-etale} does not automatically follow from
    (the proof of) \cref{prop:twoPIQequiv} and \cite[Theorem 2.1]{BMS23}, which is $d=1$ version of
    \cref{thm:PIQ-etale}.
    Indeed, in the proof of \cref{prop:twoPIQequiv}, we apply PIQ (with fixed number field) to
    the self-morphism induced on the symmetric product $X^{(d)}$.
    But this induced morphism could be non-\'etale. 
\end{remark}

The following lemma is the key in the proof of \cref{thm:PIQ-etale}.

\begin{lemma}\label{lem:inverse-images-by-etale}
Let $f \colon X \longrightarrow X$ be an \'etale morphism 
of a projective variety $X$ over a number field $K$.
Let $Y \subset X$ be a normal closed subscheme.
Let $d\in \Z_{\geq 1}$.
Then there is a non-negative integer $t$ such that for all $s \geq 0$, we have
\begin{align}
    \# \left\{ Z\  \middle|\
    \parbox{20em}{$Z$ is an irreducible component of $f^{-s}(Y)$ such that $Z(L) \neq  \emptyset$ for some $L$ with $[L:K] \leq d$.}  \right\}
    \leq t
\end{align}

\end{lemma}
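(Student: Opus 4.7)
The plan is to combine reduction modulo a prime of good reduction with the fact that étaleness of $f$, combined with normality of $Y$, forces the irreducible components of $f^{-s}(Y)$ to be pairwise disjoint. The bound will then come from the finiteness of $\F_{v^a}$-points on the special fiber of $X$ for $a \leq d$.

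First, spread out the data to a projective model $\mathfrak{X} \to \Spec \O_{K,S}$ of $X$, a closed subscheme $\mathcal{Y} \subset \mathfrak{X}$ extending $Y$, and a morphism $\tilde f \colon \mathfrak{X} \to \mathfrak{X}$ extending $f$, for a suitable finite set of places $S$. After enlarging $S$ we may assume that $\tilde f$ is étale and that $\mathcal{Y}$ is normal and flat over $\O_{K,S}$: since the generic fiber $Y$ is normal, the non-normal locus of the closure $\mathcal{Y}$ has empty generic fiber, hence is supported over finitely many primes which we throw into $S$. Then $\tilde f^{-s}(\mathcal{Y}) \to \mathcal{Y}$ is finite étale, so $\tilde f^{-s}(\mathcal{Y})$ is normal, and its irreducible components are therefore pairwise disjoint. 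Moreover, since each component is flat over $\O_{K,S}$, its generic fiber is a non-empty integral closed subscheme of $f^{-s}(Y)$, giving a bijection between the irreducible components of $\tilde f^{-s}(\mathcal{Y})$ and those of $f^{-s}(Y)$ via the generic fiber.

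Next, fix any place $v \notin S$ and set $X_v := \mathfrak{X} \times_{\O_{K,S}} \F_v$. The reductions $\mathcal{Z}_v$ of the components $\mathcal{Z}$ of $\tilde f^{-s}(\mathcal{Y})$ are pairwise disjoint closed subschemes of $X_v$. Suppose the generic fiber $Z$ of such a $\mathcal{Z}$ admits an $L$-point for some $K \subset L$ with $[L:K] \leq d$. Since $\mathcal{Z} \subset \mathfrak{X}$ is proper over $\O_{K,S}$, the valuative criterion of properness extends this point to a morphism $\Spec \O_{L,v'} \to \mathcal{Z}$ for every place $v' \mid v$ of $L$, and reduction modulo $v'$ produces a point of $\mathcal{Z}_v(\F_{v'})$ with $[\F_{v'} : \F_v] \leq [L:K] \leq d$. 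Thus $\mathcal{Z}_v$ meets $\bigcup_{a=1}^d X_v(\F_{v^a})$, where $\F_{v^a}$ denotes the degree-$a$ extension of $\F_v$.

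Since $X_v$ is a projective variety over the finite field $\F_v$, this union has cardinality independent of $s$, and the disjointness of the $\mathcal{Z}_v$ then bounds the number of relevant components by $t := \sum_{a=1}^d |X_v(\F_{v^a})|$. The essential step requiring care is arranging, in the spread-out, that $\mathcal{Y}$ (and hence $\tilde f^{-s}(\mathcal{Y})$) is normal and flat, so that components are clean and in bijection with those of $f^{-s}(Y)$; once this is set up, the counting argument is immediate and uniform in $s$.
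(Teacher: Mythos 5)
Your proposal is correct and follows essentially the same route as the paper's proof: spread out $X$, $f$, $Y$ to a model over $\O_{K,S}$ with $\widetilde f$ \'etale and $\mathcal{Y}$ normal, so that $\widetilde f^{-s}(\mathcal{Y})$ is a disjoint union of normal integral components, then extend any point of degree $\leq d$ to an integral point by properness and reduce at a fixed place $v \notin S$, bounding the number of relevant components by the number of points of the special fiber over extensions of $\F_v$ of degree $\leq d$. Your additional care about flatness of $\mathcal{Y}$ (to match components of $\widetilde f^{-s}(\mathcal{Y})$ with those of $f^{-s}(Y)$) is a harmless refinement of a step the paper leaves implicit.
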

\begin{proof}
As in \cite[Lemma 2.2]{BMS23},
let $S$ be a sufficiently large finite set of primes of $K$ so that we have models $\X, F$, and $\Y$
over $\O_{K,S}$ of $X, f$, and $Y$ respectively with the following properties:
\begin{enumerate}
\item $\X$ is an integral projective scheme over $\O_{K,S}$;
\item $F \colon \X \longrightarrow \X$ is an \'etale surjective morphism over $\O_{K,S}$;
\item $\Y$ is a closed subscheme of $\X$ and normal.
\end{enumerate}

Since $F$ is \'etale and $\Y$ is normal, $F^{-s}(\Y)$ is a disjoint union of normal integral schemes for 
any $s \geq 0$:
\begin{align}
F^{-s}(\Y) = \coprod_{i=1}^{r_s} \mathcal{Z}_{i}^{(s)}.
\end{align}
Let $Z_{i}^{(s)}$ be the generic fiber of $ \mathcal{Z}_{i}^{(s)}$. 
Then $Z_{i}^{(s)}$'s are the irreducible components of $f^{-s}(Y)$:
\begin{align}
f^{-s}(Y) = \coprod_{i=1}^{r_s} Z_{i}^{(s)}.
\end{align}
By relabeling if necessary, we may assume that the first $l$ components admit rational points of 
degree at most $d$ over $K$. That is, for $i = 1,\dots ,l$, there are finite extensions $K \subset L_i$
with $[L_i : K] \leq d$ and
\begin{align}
    Z_i^{(s)}(L_i) \neq \emptyset.
\end{align}
Let $R_i$ be the integral closure of $\O_{K,S}$ in $L_i$.
Then any point of $Z_i^{(s)}(L_i)$ extends to a point of $\mathcal{Z}_i^{(s)}(R_i)$
because $\mathcal{Z}_i^{(s)}$ is proper over $\O_{K,S}$.
Let us take $x_i \in \mathcal{Z}_i^{(s)}(R_i)$ for each $i = 1,\dots, l$.
Pick a maximal ideal $\p \subset \O_{K,S}$ and take maximal ideals 
$\q_i \subset R_i$ containing $\p R_i$.
Then we get the following diagram:
\begin{equation}
    \begin{tikzcd}
        \Spec R_i \arrow[d,"x_i"] & \Spec R_i/\p R_i \arrow[l] \arrow[d] & \Spec R_i/\q_i \arrow[l] \arrow[ld,"\xi_i",pos=0.4] \\
        \X \arrow[d] & \X \times_{\O_{K,S}} \O_{K,S}/\p \mathrlap{=:\X_{\p}} \arrow[l] \arrow[d]\\
        \Spec \O_{K,S} & \Spec \O_{K,S}/\p \arrow[l]
    \end{tikzcd}
\end{equation}
where the squares are fiber products and $\xi_i$ is the induced morphism.
We have
\begin{align}
    [R_i/\q_i : \O_{K,S}/\p] \leq [L_i : K] \leq d.
\end{align}
In particular, if $\# \O_{K,S}/\p = q$, then $\# R_i/\q_i = q^j$ for some $1 \leq j \leq d$.
Also the images of $\xi_i$'s are disjoint because $\mathcal{Z}_i^{(s)}$ are disjoint.
Therefore, we get
\begin{align}
    l \leq \# \bigcup_{j=1}^d \Hom(\Spec \F_{q^j}, \X_{\p})
\end{align}
where the $\Hom$ in the right-hand side stands for the set of morphisms of schemes.
We are done since the right-hand side is finite and independent of $s$.
\end{proof}

\begin{proof}[Proof of \cref{thm:PIQ-etale}]
The proof is the same as that of \cite[Theorem 2.1]{BMS23}.
First, we may assume $Y$ is reduced.
If $Y$ is normal, the statement follows from \cref{lem:inverse-images-by-etale}.

For the general case, we proceed by induction on $\dim Y$.
If $\dim Y=0$, then $Y$ is normal, and we are done.

Suppose $\dim Y > 0$.
Let us write $Y = Y_{1} \cup Y_{2}$ where
$Y_{1}$ is the union of all maximal dimensional irreducible components and
$Y_{2}$ is the union of other components.
We endow $Y_{1}, Y_{2}$ with the reduced scheme structure. 
Then as in the proof of \cite[Theorem 2.1]{BMS23}, we have 
\begin{align}
f(Y_{1}) \subset Y_{1}, \quad 
f(\Sing Y_{1}) \subset \Sing Y_{1}
\end{align}
where $\Sing Y_{1}$ is the non-regular locus of $Y_{1}$.

Let $\nu \colon \widetilde{Y_{1}} \longrightarrow Y_{1}$ be the normalization,
i.e.\ $ \widetilde{Y_{1}}$ is the union of the normalization of irreducible components of $Y_{1}$.
Take a closed immersion $i \colon \widetilde{Y_{1}} \longrightarrow \P^{N}_{K}$ and
we regard $ \widetilde{Y_{1}}$ as a closed subscheme of $\P^{N} \times X$ via
\[
\widetilde{Y_{1}} \xrightarrow{(i, \nu)} \P^{N} \times Y_{1} \subset \P^{N} \times X.
\]
Set $g=\id \times f \colon \P^{N}\times X \longrightarrow \P^{N}\times X$.
Since $g$ is \'etale and $ \widetilde{Y_{1}}$ is normal, by \cref{lem:inverse-images-by-etale},
there exists a positive integer $t$ such for all $s$, at most $t$ irreducible components of $g^{-s}(\widetilde{Y_{1}})$
admit $L$-points for some $L$ with $[L:K] \leq d$. 

Note that the diagram 
\begin{equation}
    \begin{tikzcd}
        g^{-s-1}(\widetilde{Y_{1}}) \arrow[r,"g^{s+1}"] \arrow[d]& \widetilde{Y_{1}} \arrow[d,"\nu"]\\
        f^{-s-1}(Y_{1}) \arrow[r,"f^{s+1}",swap] & Y_{1}
    \end{tikzcd}
\end{equation}
is cartesian and $\nu$ is an isomorphism over $Y_{1} \smallsetminus \Sing Y_{1}$.

We apply the induction hypothesis to $f$ and $\Sing Y_1$.
Then there exists a non-negative integer $s_{0}$ such that 
\begin{align}\label{indhyp-to-sing}
    (f^{-s-1}(\Sing Y_{1}) \smallsetminus f^{-s}(\Sing Y_{1}))(L) = \emptyset
\end{align}
for all $L$ with $[L:K] \leq d$ and $s \geq s_0$.
We claim that 
\begin{align}
    (f^{-s_0-t-1}(Y_{1}) \smallsetminus f^{-s_0-t}(Y_{1}))(L) = \emptyset
\end{align}
for all $L$ with $[L:K] \leq d$.
Indeed, suppose there is $L$ with $[L:K] \leq d$ and
$x \in (f^{-s_0-t-1}(Y_{1}) \smallsetminus f^{-s_0-t}(Y_{1}))(L)$.
Then $x, f(x), \dots, f^t(x)$ are contained in different irreducible components of
$f^{-s_0-t-1}(Y_1)$.
Moreover, they are not mapped into $\Sing Y_1$ by $f^{s_0+t+1}$ because 
$x \not\in f^{-s_0-t}(Y_1)$ and \cref{indhyp-to-sing}.
Thus $x, f(x), \dots, f^t(x)$ are contained in the open subset over which 
$g^{-s_0-t-1}(\widetilde{Y}_1) \longrightarrow f^{-s_0-t-1}(Y_1)$ is isomorphic.
Thus $x, f(x), \dots , f^t(x)$ produce $L$-points of $g^{-s_0-t-1}(\widetilde{Y}_1)$
that are on distinct irreducible components. This contradicts the definition of $t$.

Now we have proven that
\[
(f^{-s-1}(Y_{1}) \smallsetminus f^{-s}(Y_{1}))(L)= \emptyset
\]
for all $L$ with $[L:K] \leq d$ and $s \geq s_0 + t$.

Let $Y_{3}\subset Y_{2}$ be the union of all irreducible components, which are eventually mapped into $Y_{1}$ and
let $Y_{4} \subset Y_{2}$ be the union of all other components.
Then we have $f(Y_{4}) \subset Y_{4}$.
Take $r>0$ so that $f^{r}(Y_{3})\subset Y_{1}$.
Then for $s \geq s_{0}+t$ and for all $L$ with $[L:K] \leq d$,
\begin{align}
&\hphantom{=} (f^{-s-1}(Y)\smallsetminus f^{-s}(Y))(L) \\
&\subset (f^{-s-1}(Y_{1})\smallsetminus f^{-s}(Y_{1}))(L) \cup (f^{-s-1}(Y_{3})\smallsetminus f^{-s}(Y))(L) \\
& \qquad \qquad \qquad \qquad \qquad \qquad \qquad \qquad \cup (f^{-s-1}(Y_{4})\smallsetminus f^{-s}(Y_{4}))(L)\\
& = (f^{-s-1}(Y_{3})\smallsetminus f^{-s}(Y))(L) \cup (f^{-s-1}(Y_{4})\smallsetminus f^{-s}(Y_{4}))(L)\\
& \subset (f^{-s-1 -r}(Y_{1})\smallsetminus f^{-s}(Y))(L) 
\cup (f^{-s-1}(Y_{4})\smallsetminus f^{-s}(Y_{4}))(L)\\
&=(f^{-s-1}(Y_{4})\smallsetminus f^{-s}(Y_{4}))(L).
\end{align}
Applying the induction hypothesis to $Y_{4}$ and $f$, we are done.
\end{proof}

\section{Uniformity of \texorpdfstring{$s_0$}{s0} for abelian varieties}\label{sec:uniformity of s0 ab var}

In this section, we prove \cref{thm:main_uniformity_ab_var}.
At its core, the uniformity arises from the finiteness of torsion points 
defined over a number field, and 
a finiteness results on abelian subvarieties due to Lenstra and Oort \cite{Len96}.
Let us start with a linear algebra lemma.

\begin{lemma}\label{lem:period-of-subspaces}
    Let $V$ be a finite-dimensional $\Q$-vector space.
    Then there exists $n_0 \in \Z_{\geq 1}$ that depends only on $\dim V$
    with the following property.
    For any $\Q$-linear automorphism $f \colon V \longrightarrow V$ and 
    any $f$-periodic $\Q$-subspace $W \subset V$, we have
    \begin{align}
        f^{n_0}(W) = W.
    \end{align}
    Here a subspace $W$ is said to be $f$-periodic if
    $f^n(W) = W$ for some $n \in \Z_{\geq 1}$.
\end{lemma}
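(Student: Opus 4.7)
The plan is to reduce to the case $\dim W = 1$ via the Plücker embedding, and then bound the period of a periodic line by the order of the group of roots of unity in a number field whose degree is controlled by $\dim V$. First, if $W \subset V$ has dimension $k$ and is $f$-periodic with period $n$, then the line $\Lambda^{k} W \subset \Lambda^{k} V$ is periodic under $\Lambda^{k} f$ with the same period $n$, since the Plücker embedding $\mathrm{Gr}(k,V)(\Q) \hookrightarrow \P(\Lambda^{k} V)(\Q)$ is injective and equivariant. Because $\dim \Lambda^{k} V = \binom{\dim V}{k} \leq 2^{\dim V}$, it suffices to prove the lemma for one-dimensional periodic subspaces, allowing the ambient dimension to grow to at most $2^{\dim V}$.

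For a one-dimensional $f$-periodic subspace $W = \Q v$ of period $n$ in some $\Q$-vector space $V'$, I would restrict to the $\Q[f]$-cyclic submodule $U := \Q[f] \cdot v$. Since $f$ is invertible on $V'$ and $U$ is finite-dimensional and $f$-stable, $f|_{U}$ is invertible, and $U \cong \Q[t]/(m(t))$ as $\Q[t]$-modules, where $t$ acts as $f$, $v$ corresponds to $1$, and $m \in \Q[t]$ is monic of degree $\leq \dim V'$ with $m(0) \neq 0$. Under this identification the periodicity condition $f^{n}(v) \in \Q^{*} v$ becomes $m(t) \mid t^{n} - \lambda$ in $\Q[t]$ for some $\lambda \in \Q^{*}$. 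Since $t^{n} - \lambda$ has $n$ distinct roots over $\bar{\Q}$, $m$ must be squarefree; let $L$ be its splitting field over $\Q$. The divisibility condition is equivalent to saying that all roots of $m$ have the same $n$-th power $\lambda$, which forces every ratio $\alpha/\beta$ of two roots of $m$ to be an $n$-th root of unity in $L$; conversely, if every such ratio is an $n$-th root of unity, then the common value $\alpha^{n}$ is automatically Galois-fixed, hence rational. Therefore the period equals $\mathrm{lcm}\{\mathrm{ord}(\alpha/\beta) : \alpha, \beta \text{ roots of } m\}$, and in particular divides the order $|\mu(L)|$ of the group of roots of unity in $L$.

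Finally, I would bound $|\mu(L)|$ uniformly in $\dim V$. Since $\deg m \leq 2^{\dim V}$, we have $[L : \Q] \leq (\deg m)! \leq (2^{\dim V})!$, and for any primitive $k$-th root of unity $\zeta_{k} \in L$, $\phi(k) \leq [L : \Q]$; because $\phi(k) \to \infty$, this bounds $|\mu(L)|$ explicitly in terms of $\dim V$. One may then take $n_{0}$ to be the least common multiple of all $k \geq 1$ with $\phi(k) \leq (2^{\dim V})!$. The main conceptual point is the recognition that the period of a line in $\Q[t]/(m(t))$ is governed by the roots of unity in the splitting field of $m$; the main practical drawback is the astronomically large bound produced by the Plücker reduction, which could presumably be sharpened substantially by an argument that treats higher-dimensional subspaces directly, although such a refinement is not needed for the intended application.
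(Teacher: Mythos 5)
Your proof is correct and follows essentially the same route as the paper: reduce to one-dimensional periodic subspaces via exterior powers (so the ambient dimension is bounded by $2^{\dim V}$), then show the period is controlled by roots of unity whose degree over $\Q$ is bounded in terms of $\dim V$. The only real difference is in the execution of the one-dimensional step, which you phrase via the divisibility $m(t)\mid t^{n}-\lambda$ and ratios of roots in the splitting field, whereas the paper diagonalizes $f|_U$ and normalizes the eigenvalues by a positive real $n$-th root of the scalar $f|_U^{n}$, arriving at the same bounded-degree roots of unity.
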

\begin{proof}
    Let $W$ be an $f$-periodic $\Q$-subspace of $V$ and let
    $d = \dim W$.
    If $d = 0$, then $f(W) = W$.
    Suppose $d \geq 1$.
    Consider the $\Q$-linear automorphism 
    \begin{align}
        \wedgeop{d} f \colon \wedgeop{d} V \longrightarrow \wedgeop{d} V.
    \end{align}
    Then, it is easy to see that
    \begin{align}
        f^n(W) = W \iff 
        (\wedgeop{d} f)^n(\wedgeop{d} W) = \wedgeop{d} W
    \end{align}
    for all $n \in \Z_{\geq 1}$.
    Therefore, it is enough to show the statement for one-dimensional subspaces $W$.

    Now, suppose $W \subset V$ is an $f$-periodic one-dimensional subspace.
    We will show that there is $n_0 \in \Z_{\geq 1}$ depending only on $\dim V$
    such that $f^{n_0}(W) = W$.
    Take $n \in \Z_{\geq 1}$ such that $f^n(W) = W$.
    Let $w \in W \smallsetminus \{0\}$ and set
    \begin{align}
        U = \langle w, f(w), \dots, f^{n-1}(w) \rangle_{\Q},
    \end{align}
    the subspace of $V$ generated by $w, \dots, f^{n-1}(w)$.
    We have $f(U)= U$ and there is $a \in \Q^{\times}$ such that
    $f|_U^n$ is multiplication by $a$.
    In particular, $f|_U$ is diagonalizable over $\C$.
    Replacing $n$ with $2n$ if necessary, we may assume $a >0$.
    Take $b \in \R_{>0}$ such that $b^n = a$.
    Let $\l_1,\dots, \l_r$ be all the roots of the characteristic polynomial 
    of $f|_U$ (possibly repeated).
    Note $r = \dim U \leq \dim V$.
    Then we have $\l_i^n = a$, and hence we can write 
    \begin{align}
        \l_i = \zeta_i b
    \end{align}
    where $\zeta_i \in \C$ with $\zeta_i^n=1$.
    Then
    \begin{align}
        \Q \ni \l_1 \cdots \l_r = \zeta_1 \cdots \zeta_r b^r
    \end{align}
    implies $b^r \in \Q$.
    Thus 
    \begin{align}
        [\Q(\zeta_i) : \Q] = [\Q(b^{-1}\l_i) : \Q] \leq r^2 \leq (\dim V)^2.
    \end{align}
    Therefore, there is $n_0 \in \Z_{\geq 1}$ depending only on $\dim V$
    such that 
    \begin{align}
        \l_1^{n_0} = \cdots = \l_r^{n_0} \in \Q. 
    \end{align}
    Remembering $f|_U$ is diagonalizable, we can deduce that $f|_U^{n_0}$
    is a multiplication by a non-zero rational number, and hence 
    $f^{n_0}(W) = W$.
\end{proof}

We can uniformly bound the period of periodic abelian subvarieties.

\begin{lemma}\label{lem:period-of-absubvar}
    Let $X$ be an abelian variety over a number field $K$.
    Then, there exists $n_0 \in \Z_{\geq 1}$ that depends only on 
    $X$ with the following property.
    For any isogeny $\f \colon X \longrightarrow X$ 
    and any $\f$-periodic abelian subvariety $A \subset X$,
    we have
    \begin{align}
        \f^{n_0}(A) = A.
    \end{align}
    Here $A$ being $\f$-periodic means that 
    $\f^n(A) = A$ for some $n \in \Z_{\geq 1}$.
\end{lemma}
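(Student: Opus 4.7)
The plan is to reduce the statement to the linear-algebra result \cref{lem:period-of-subspaces}, applied to the Betti homology of $X$. Fix an embedding $K \hookrightarrow \C$ and set $V := H_1(X(\C), \Q)$, which is a $\Q$-vector space of dimension $2\dim X$. Any isogeny $\f \colon X \longrightarrow X$ induces a $\Q$-linear automorphism $\f_* \colon V \longrightarrow V$, and for any abelian subvariety $A \subset X_\C$, I set $V_A := H_1(A(\C), \Q) \subset V$. Then $V_{\f(A)} = \f_*(V_A)$, so $\f$-periodicity of $A$ translates into $\f_*$-periodicity of $V_A$.

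The first key step is to check that the assignment $A \mapsto V_A$, from the set of abelian subvarieties of $X_\C$ into the set of $\Q$-subspaces of $V$, is injective. This reduces to the standard fact that $H_1(A, \Z)$ is a primitive sublattice of $H_1(X, \Z)$ --- which holds because $X/A$ is itself an abelian variety, so $H_1(X/A, \Z)$ is torsion free --- from which $A$ can be recovered as the complex subtorus $H_1(A, \R)/H_1(A, \Z)$ inside $X(\C)$.

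With this correspondence in hand, \cref{lem:period-of-subspaces} applied to $V$ supplies an integer $n_0 \in \Z_{\geq 1}$ depending only on $\dim V = 2\dim X$, and hence only on $X$, with the following property: for any $\Q$-linear automorphism $f$ of $V$ and any $f$-periodic subspace $W \subset V$, one has $f^{n_0}(W) = W$. Applying this with $f = \f_*$ and $W = V_A$ yields $\f_*^{n_0}(V_A) = V_A$, so $V_{\f^{n_0}(A)} = V_A$, and the injectivity of $A \mapsto V_A$ then forces $\f^{n_0}(A) = A$, as required.

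The only delicate point I anticipate is the choice of linearization. \cref{lem:period-of-subspaces} genuinely requires a $\Q$-vector space structure --- its proof exploits that the eigenvalues of $f|_U$ lie in an extension of $\Q$ of bounded degree --- so replacing Betti homology with, for instance, the rational $\ell$-adic Tate module $V_\ell(X)$ would not suffice without additional input. Passing to $\C$ via a complex embedding is the cleanest way to manufacture the required $\Q$-structure; once it is in place, the argument is purely linear algebra combined with the subtorus-recovery step of the second paragraph.
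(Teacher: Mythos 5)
Your proof is correct, but it takes a genuinely different route from the paper. The paper first invokes Lenstra--Oort \cite{Len96} (finitely many abelian subvarieties up to automorphism) to reduce to the case where every abelian subvariety has a Zariski dense set of rational points after a finite extension $L/K$, and then applies \cref{lem:period-of-subspaces} to the Mordell--Weil space $X(L)\otimes_{\Z}\Q$, recovering $\f^{n_0}(A)=A$ by taking Zariski closures; as a result its $n_0$ depends on the Mordell--Weil rank of $X(L)$ (a point the authors themselves flag in the closing remark of the section). You instead apply the same linear-algebra lemma to the Betti homology $H_1(X(\C),\Q)$ via a complex embedding, using the injective, $\f$-equivariant dictionary $A\mapsto H_1(A(\C),\Q)$; this avoids both \cite{Len96} and the Mordell--Weil theorem, needs nothing about $K$ beyond characteristic zero, and yields the stronger conclusion that $n_0$ depends only on $\dim X$ (via $\dim_\Q V=2\dim X$). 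Your two supporting points check out: $\f_*$ is an automorphism of $V$ because an isogeny has a quasi-inverse up to $[\deg\f]$, and $A\mapsto V_A$ is injective --- in fact the primitivity detour is not even needed, since $A(\C)$ is recovered directly as the image of $\mathrm{Lie}(A)=V_A\otimes_\Q\R$ under the exponential of $X(\C)$; equality $\f^{n_0}(A)_\C=A_\C$ then descends to $K$ by faithfully flat base change. Your caveat about $\ell$-adic Tate modules is also apt, since \cref{lem:period-of-subspaces} as stated exploits the $\Q$-structure.
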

\begin{proof}
    By \cite{Len96}, the action of group automorphisms of $X$ defined over $K$
    on the set of abelian subvarieties of $X$ has only finitely many orbits.
    Let $A_1, \dots , A_n \subset X$ be abelian subvarieties that represent 
    these finitely many orbits.
    Then there is a finite extension $L$ of $K$ such that $A_i(L)$ is Zariski dense 
    in $(A_i)_L$ for all $i=1,\dots ,n$.
    Since every abelian subvarieties of $X$ are isomorphic to one of $A_i$
    over $K$, it follows that every abelian subvariety has densely many
    $L$-rational points.
    Replacing $K$ with $L$ and $X$ with $X_L$, it is enough to prove the statement 
    under the additional assumption that $A(K)$ is Zariski dense in $A$.
    
    Let $V = X(K) \otimes_{\Z} \Q$.
    Then, $V$ is a finite-dimensional $\Q$-vector space.
    Let $\f$ and $A$ be as in the statement.
    Then $\f$ induces an $\Q$-linear automorphism $\f_\Q \colon V \longrightarrow V$,
    and $A(K) \otimes_{\Z} \Q \subset V$ is a $\f_\Q$-periodic subspace.
    By \cref{lem:period-of-subspaces}, there is $n_0 \in \Z_{\geq 1}$
    depending only on $\dim V$ such that 
    \begin{align}
        \f_{\Q}^{n_0}(A(K) \otimes_{\Z} \Q) = A(K) \otimes_{\Z} \Q.
    \end{align}
    This implies there is $m \in \Z_{\geq 1}$ such that
    \begin{align}
        m \f^{n_0} (A(K)) \subset A(K),
    \end{align}
    and therefore $\f^{n_0}(mA(K)) \subset A(K)$.
    Since $A(K)$ is Zariski dense in $A$,
    $mA(K)$ is also Zariski dense in $[m](A) = A$.
    (Here $[m]$ stands for the multiplication by $m$ map acting on the 
    scheme $A$.)
    Taking Zariski closure, we get
    \begin{align}
        \f^{n_0}(A) = \overline{\f^{n_0}(mA(K))} \subset A.
    \end{align}
    Looking at the dimension, we get $\f^{n_0}(A) = A$ and we are done.
\end{proof}

We summarize some facts on minimal polynomials of endomorphisms on abelian varieties
that we use later.
These are well-known, but we give proofs for completeness.

\begin{lemma}\label{lem:min-poly}
    Let $k$ be a field and $X$ be an abelian variety over $k$.
    For a group endomorphism $\f \colon X \longrightarrow X$,
    the minimal polynomial of $\f$ is denoted by $P_\f$.
    ($P_\f$ is a monic polynomial with integer coefficients.)
    \begin{enumerate}
        \item For a group endomorphism $\f \colon X \longrightarrow X$ and 
        $n \in \Z_{\geq 1}$, 
        $P_\f$ has a cyclotomic polynomial as a factor if and only if 
        so  does $P_{\f^n}$.

        \item There are $n_0, m_0 \in \Z_{\geq 1}$ depending only on $\dim X$
        such that for every group endomorphism $\f \colon X \longrightarrow X$
        whose minimal polynomial is
        a product of cyclotomic polynomials, we have $(\f^{n_0} - \id)^{m_0}=0$.

        \item There is $n_0 \in \Z_{\geq 1}$ depending only on $\dim X$
        such that for every group endomorphism $\f \colon X \longrightarrow X$,
        the minimal polynomial of $\f^{n_0}$ is of the form
        \begin{align}
            P_{\f^{n_0}}(t) = (t-1)^m Q(t)
        \end{align}
        for some $m \in \Z_{\geq 0}$ and $Q(t) \in \Z[t]$ such that
        $Q(t)$ does not have cyclotomic polynomials as a factor.
    \end{enumerate}
\end{lemma}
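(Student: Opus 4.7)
The plan is to reduce all three parts to elementary facts about the eigenvalues of $\f$ acting on the rational Tate module. Fix any prime $\ell$ different from the characteristic of $k$, and write $V_\ell X = T_\ell X \otimes_{\Z_\ell} \Q_\ell$. The representation $\End^0(X) := \End(X) \otimes_{\Z} \Q \hookrightarrow \End_{\Q_\ell}(V_\ell X)$ is faithful, so $P_\f$ coincides with the minimal polynomial of the induced endomorphism of $V_\ell X$. In particular, the roots of $P_\f$ in $\QQ$ are exactly the distinct eigenvalues of this action, and each irreducible factor in $\Z[t]$ divides the characteristic polynomial, which has degree $2\dim X$; hence every irreducible factor of $P_\f$ has degree at most $2\dim X$.

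For (1), a monic polynomial in $\Z[t]$ has a cyclotomic factor if and only if some root in $\QQ$ is a root of unity. The eigenvalues of $\f^n$ on $V_\ell X$ are the $n$-th powers of the eigenvalues of $\f$, and $\l^n$ is a root of unity if and only if $\l$ is; so (1) is immediate.

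For (2) and (3), set $g = \dim X$ and
\[ n_0 := \mathrm{lcm}\{ d \in \Z_{\geq 1} \mid \phi(d) \leq 2g \}, \]
a well-defined integer depending only on $g$. The key observation is that every root-of-unity eigenvalue $\l$ of $\f$ on $V_\ell X$, being an algebraic number of degree at most $2g$ over $\Q$, has order $d$ satisfying $\phi(d) \leq 2g$, hence $d \mid n_0$, so $\l^{n_0} = 1$. Under the hypothesis of (2), every eigenvalue of $\f$ is a root of unity, so every eigenvalue of $\f^{n_0}$ on $V_\ell X$ equals $1$; the characteristic polynomial of $\f^{n_0}$ is therefore $(t-1)^{2g}$, and Cayley--Hamilton yields $(\f^{n_0} - \id)^{2g} = 0$ on $V_\ell X$, and hence in $\End(X)$ by faithfulness. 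Thus (2) holds with $m_0 = 2g$. For (3), any eigenvalue $\mu = \l^{n_0}$ of $\f^{n_0}$ that is a root of unity forces $\l$ to be a root of unity, so $\mu = \l^{n_0} = 1$ by the key observation. Hence the only cyclotomic irreducible factor of $P_{\f^{n_0}}$ in $\Z[t]$ is $t - 1$, and collecting factors yields the desired factorization $P_{\f^{n_0}}(t) = (t-1)^m Q(t)$ with $Q$ free of cyclotomic factors.

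The only delicate point of the argument is the identification of $P_\f$ with the minimal polynomial of $\f$ on $V_\ell X$, together with the implication that a polynomial identity valid on $V_\ell X$ also holds in $\End(X)$; this is precisely the classical faithfulness of the Tate module representation. Once this is granted, the rest is purely elementary manipulation of eigenvalues and cyclotomic polynomials.
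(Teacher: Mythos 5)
Your argument is correct, but it follows a genuinely different route from the paper. You work throughout in the $\ell$-adic Tate module (with $\ell\neq\operatorname{char}k$, which you rightly need since $k$ is arbitrary), identify $P_\f$ with the minimal polynomial of $\f$ on $V_\ell X$ via faithfulness of $\End(X)\otimes_\Z\Q\hookrightarrow\End_{\Q_\ell}(V_\ell X)$, and read off all three statements from the eigenvalues. The paper instead stays inside the endomorphism algebra: part (1) is proved by the criterion that $P_\f$ has a cyclotomic factor if and only if $\f^m-\id$ fails to be surjective for some $m\geq 1$; part (2) uses the embedding $\Q[t]/(P_\f)\hookrightarrow\End(X)_{\Q}$ to get $\deg P_\f\leq 4(\dim X)^2$, hence a bound on the order of root-of-unity roots; and part (3) splits $X$ up to isogeny as $A\times B$ with $A=Q(\f)(X)$, $B=R(\f)(X)$ for the factorization $P_\f=QR$ into the non-cyclotomic and cyclotomic parts, and then multiplies the minimal polynomials of $\f_A$ and $\f_B$. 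What your approach buys: explicit and sharper constants, namely $n_0=\operatorname{lcm}\{d\mid\phi(d)\leq 2\dim X\}$ and $m_0=2\dim X$ (versus bounds of size $4(\dim X)^2$ in the paper), a single uniform eigenvalue argument covering (1)--(3), and no need for the isogeny decomposition in (3). What the paper's route buys: it avoids invoking the Tate-module representation and its faithfulness, using only the finite-dimensionality of $\End(X)_\Q$ and elementary manipulations with surjectivity and isogenies. One cosmetic remark: where you say each irreducible factor of $P_\f$ divides the characteristic polynomial in $\Z[t]$, all you actually need is $\deg P_\f\leq 2\dim X$, which already follows from the minimal polynomial dividing the characteristic polynomial of $\f$ on $V_\ell X$ over $\Q_\ell$; the integrality and $\ell$-independence of the characteristic polynomial, while classical, are not needed for your bound.
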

\begin{proof}
    (1)
    Note that $P_\f$ has a cyclotomic factor if and only if
    there is $m \in \Z_{\geq 1}$ such that
    $\f^m -\id$ is not surjective. Indeed, let us write $P_\f = Q(t) R(t)$
    where $Q, R \in \Z[t]$ are monic and $Q$ has no cyclotomic factor and 
    $R$ is a product of cyclotomic polynomials (possibly repeated).
    Suppose $R \neq 1$.
    Take $l, m \in \Z_{\geq 1}$ such that $R(t) \mid (t^m -1)^l$.
    Since $R \neq 1$, $R(\f)$ is not surjective and hence 
    $(\f^m - \id)^l$ is not surjective.
    This implies $\f^m - \id$ is not surjective.
    Conversely, suppose $\f^m -\id$ is not surjective for some $m \in \Z_{\geq 1}$.
    If $R = 1$, then $P_\f(t)$ and $t^m - 1$ are coprime.
    This implies $\f^m - \id$ is surjective and contradiction.

    Now, the statement easily follows.

    (2)
    Since 
    \begin{align}
        \Q[t]/(P_\f(t)) \longrightarrow \End(X)_{\Q}
    \end{align}
    is an injection of $\Q$-vector spaces, we have
    \begin{align}
        \deg P_\f \leq \dim \End(X)_{\Q} \leq 4(\dim X)^2.
    \end{align}
    Thus there exists $n_0 \in \Z_{\geq 1}$ that is determined by 
    $\dim X$ such that all the roots of $P_\f$ are $n_0$-th root
    of $1$.
    Hence $P_\f(t) \mid (t^{n_0} - 1)^{4(\dim X)^2}$
    and we are done.

    (3)
    Let us write $P_\f = Q R$ as in the proof of $(1)$.
    Let $A = Q(\f)(X)$ and $B = R(\f)(X)$.
    Then $A, B \subset X$ are $\f$-invariant abelian subvarieties.
    Moreover, we have the commutative diagram
    \begin{equation}
        \begin{tikzcd}
            A \times B \arrow[r,"\f_A \times \f_B"] \arrow[d] 
            & A \times B \arrow[d]\\
            X \arrow[r,"\f",swap] & X
        \end{tikzcd}
    \end{equation}
    where the vertical arrows are additions and $\f_A$ and $\f_B$
    are restrictions of $\f$ on $A$ and $B$ respectively.
    By construction, the vertical arrow is an isogeny.
    (cf.\ \cite[section 3]{Sil17}.)

    The minimal polynomial of $\f_A$ is $R(t)$.
    By (2), there are $n_0, m_0$ depending only on $\dim X$ such that
    $(\f_A^{n_0} - 1)^{m_0}=0$.
    Thus $P_{\f_A^{n_0}}(t) \mid (t - 1)^{m_0}$ and hence
    $P_{\f_A^{n_0}}(t) = (t-1)^m$ for some $m \in \Z_{\geq 1}$.
    On the other hand, the minimal polynomial of $\f_B$ is $Q$,
    which does not have cyclotomic factors.
    By (1), $P_{\f_B^{n_0}}$ has no cyclotomic factor as well.
    Therefore $P_{\f_A^{n_0}}$ and $P_{\f_B^{n_0}}$ are coprime.
    Thus 
    \begin{align}
        P_{\f^{n_0}} = P_{(\f_A \times \f_B)^{n_0}} 
        = P_{\f_A^{n_0}} P_{\f_B^{n_0}} = (t-1)^mP_{\f_B^{n_0}}
    \end{align}
    and we are done.
\end{proof}

Now we are ready to prove \cref{thm:main_uniformity_ab_var}.

\begin{theorem}[\cref{thm:main_uniformity_ab_var}]\label{thm:uniform-PIQ-ab-var}
    Let $X$ be an abelian variety over a number field $K$.
    Then there exists $s_0 \in \Z_{\geq 0}$ depending only on $X$ with the 
    following property.
    For any surjective morphism $f \colon X \longrightarrow X$ and 
    any reduced closed subscheme $Y \subset X$ with $f(Y) \subset Y$, we have 
    \begin{align}\label{eq:uniform_PIQ_ab_var_diff_set}
        (f^{-s_0 -1}(Y) \smallsetminus f^{-s_0}(Y))(K) = \emptyset.
    \end{align}
\end{theorem}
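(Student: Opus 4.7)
The plan is to reduce the problem, via \cref{lem:period-of-absubvar} and a uniform version of \cref{prop:PIQ_fixedpoint}, to a $0$-dimensional PIQ on the quotient abelian varieties $X/H$ for the relevant abelian subvarieties $H \subset X$. Write $f = \phi + c$ with $\phi(x) := f(x) - f(0)$ the induced surjective isogeny and $c = f(0)$.

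\emph{Structural reduction on $Y$.} Replacing $Y$ by $Y_\infty := \bigcap_{n \geq 0} f^n(Y)$---which stabilizes by Noetherianity and satisfies $f(Y_\infty) = Y_\infty$---one may assume $f(Y) = Y$: any $x$ with $f^s(x) \in Y$ satisfies $f^{s+n}(x) \in f^n(Y) = Y_\infty$ for $n$ large, so the same $s_0$ works. Decompose $Y$ (possibly after a bounded base change to handle Galois orbits of geometric components) into irreducible components $Y = Y_1 \cup \cdots \cup Y_k$. By a dynamical structural result of Ueno-type for abelian varieties---an $f$-periodic irreducible closed subvariety of an abelian variety, for a surjective self-morphism $f$, is a translate of an abelian subvariety---each component has the form $Y_i = a_i + H_i$ for an abelian subvariety $H_i \subset X$. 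Since $f$ is finite, $f(Y_i) = Y_{\sigma(i)}$ for some permutation $\sigma$, and this forces $\phi(H_i) = H_{\sigma(i)}$, so each $H_i$ is $\phi$-periodic. By \cref{lem:period-of-absubvar}, there is $n_0 \in \Z_{\geq 1}$ depending only on $X$ with $\phi^{n_0}(H_i) = H_i$ for every $i$, hence $\sigma^{n_0} = \id$ and $f^{n_0}(Y_i) = Y_i$. Because $Y$ is $f$-invariant, a uniform $s_0'$ for $(X, f^{n_0}, Y)$ yields $s_0 = n_0 s_0'$ for $(X, f, Y)$; so replace $f$ by $f^{n_0}$ and assume $f(Y_i) = Y_i$ for every $i$.

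\emph{Reduction to a fixed point on the quotient.} For each $i$, since $\phi(H_i) = H_i$, the morphism $f$ descends to a surjective $\bar f_i \colon X/H_i \longrightarrow X/H_i$, and $\pi_i(Y_i) = \{\bar a_i\}$ is a $K$-rational fixed point of $\bar f_i$. Then $f^s(x) \in Y_i$ is equivalent to $\bar f_i^s(\pi_i(x)) = \bar a_i$, so PIQ for $(X, f, Y_i)$ reduces to $0$-dimensional PIQ on $X/H_i$. Following the proof of \cref{prop:PIQ_fixedpoint}: writing $\bar f_i = \bar\phi_i + \bar c_i$ with $\bar f_i(\bar a_i) = \bar a_i$, the identity $\bar c_i = (1 - \bar\phi_i)(\bar a_i)$ gives $\bar f_i^s(x) = \bar\phi_i^s(x - \bar a_i) + \bar a_i$, so $\bar f_i^{-s}(\bar a_i)(K) = \bar a_i + (\ker \bar\phi_i^s)(K)$ is an ascending chain of subgroups of the finite torsion group $(X/H_i)(K)_{\mathrm{tors}}$, which stabilizes in at most $\log_2|(X/H_i)(K)_{\mathrm{tors}}|$ steps. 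By \cite{Len96}, the abelian subvarieties of $X$ form finitely many orbits under $\Aut(X)$, so there are only finitely many isomorphism classes of quotients $X/H_i$, whence $|(X/H_i)(K)_{\mathrm{tors}}|$ is uniformly bounded. Taking the maximum and multiplying by $n_0$ yields the desired uniform $s_0$ depending only on $X$.

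The main technical obstacle is the Ueno-type structural claim invoked at the start of Step~1---that $f$-periodic irreducible subvarieties of an abelian variety are translates of abelian subvarieties---together with carefully threading the uniformity through the Galois action on geometric components and through the descent to the quotients $X/H_i$, ensuring that all constants depend only on $X$ and not on $f$ or $Y$.
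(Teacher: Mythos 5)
The decisive step in your argument is false as stated. You invoke an ``Ueno-type'' claim that every $f$-periodic irreducible closed subvariety of an abelian variety, for an arbitrary surjective self-morphism $f$, is a translate of an abelian subvariety. Taking $f = \id_X$ (a perfectly admissible surjective morphism in the theorem) makes every irreducible subvariety $f$-periodic, so the claim would force every subvariety of $X$ to be a translate of an abelian subvariety, which is absurd; a less degenerate counterexample is $f = [-1]$ on the Jacobian $J$ of a genus-two curve $C$ embedded via a Weierstrass point, where $C \subset J$ satisfies $f(C) = C$ but is not a translate of an abelian subvariety. The true content of Ueno's theorem (\cite[(3.7) Theorem]{Mo85}, which the paper uses) is only that $\pi(Y_1) \subset X/\Stab(Y_1)^{\circ}$ is of general type after quotienting by the identity component of the stabilizer; it does not say $\pi(Y_1)$ is a point. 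Consequently everything after your structural claim---the reduction of each component to a $K$-rational fixed point on $X/H_i$ and the kernel-chain/torsion count---only treats the special case in which the general-type part is trivial, and the remaining (and harder) case is simply absent from your proposal.

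For comparison, the paper's proof quotients by $A = \Stab(Y_1)^{\circ}$ (whose $\f$-period is bounded by \cref{lem:period-of-absubvar}, as in your Step 1), uses Ueno's theorem to conclude that $\pi(Y_1)$ is of general type and hence that some iterate of $\overline{f}$ restricts to the identity on it, and then, after normalizing the cyclotomic part of the minimal polynomial of $\f$ via \cref{lem:min-poly}, splits into two cases according to whether $B = (X/A)/(\overline{\f}-\id)(X/A)$ is positive-dimensional. When $\dim B \geq 1$ it restricts $f^{m_0}$ to the fibers $x + C$ of $X \to B$ (translates of a strictly smaller abelian variety) and runs an induction on $\dim X$, with Lenstra's finiteness \cite{Len96} keeping the constants dependent only on $X$; only when $B = 0$ does $\pi(Y_1)$ collapse to a torsion situation resembling your fixed-point argument. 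So the induction on dimension and the minimal-polynomial case analysis are not optional refinements: they are exactly what replaces your unprovable structural claim. Your reduction $Y \rightsquigarrow \bigcap_n f^n(Y)$ and the uniform bound on $\#(X/H)(K)_{\rm tors}$ via \cite{Len96} are fine, but they do not bridge this gap.
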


\begin{proof}
    We prove the statement by induction on $\dim X$.
    When $\dim X = 0$, there is nothing to prove.
    Suppose $\dim X \geq 1$.

    Let $f$ and $Y$ be as in the statement.
    If $Y(K) = \emptyset$, then \cref{eq:uniform_PIQ_ab_var_diff_set} holds for all $s_0 \in \Z_{\geq 0}$.
    Suppose $Y(K) \neq \emptyset$.
    Let $Z$ be the Zariski closure of $Y(K)$.
    Then we have $f(Z) \subset Z$ and 
    \begin{align}
        \left( f^{-s -1}(Y) \smallsetminus f^{-s}(Y) \right)(K) 
        = \left( f^{-s -1}(Z) \smallsetminus f^{-s}(Z) \right)(K)
    \end{align}
    for all $s \in \Z_{\geq 0}$.
    Thus replacing $Y$ with $Z$, we may assume $Y(K)$
    is Zariski dense in $Y$.
    Note that in this case, every irreducible component of $Y$
    has Zariski dense set of $K$-rational points and in particular
    geometrically irreducible.
    Let $Y = Y_1 \cup \cdots \cup Y_r$ be the irreducible decomposition 
    so that $Y_1, \dots, Y_{r'}$ are $f$-periodic and
    $Y_{r'+1},\dots , Y_r$ are mapped into periodic ones by some iterates of $f$. Take $m \in \Z_{\geq 1}$ so that for all 
    $j \in \{r'+1,\dots, r \}$, $f^m(Y_j) \subset Y_i$ for some 
    $i \in \{1, \dots, r'\}$.
    Let $Y' = Y_1 \cup \cdots \cup Y_{r'}$.
    Then we have
    \begin{align}
        f^{-s-1}(Y)(K)
        \subset
        f^{-s-1-m}(Y')(K)
    \end{align}
    for all $s \in \Z_{\geq 0}$.
    Thus, we may replace $Y$ with $Y'$ to prove the statement.
    In particular, we may assume every irreducible component of $Y$
    is $f$-periodic.
    Moreover, since it is enough to prove the existence of $s_0$ for each
    $f$-cycles of irreducible components,
    we may further assume that irreducible components of $Y$
    form a cycle under $f$, say, $f(Y_i) = Y_{i+1}$ for $i=1,\dots , r-1$
    and $f(Y_r) = Y_1$.
    Since $Y_1(K) \neq \emptyset$, picking a point $y \in Y_1(K)$
    and taking conjugate by $T_{y} \colon X \longrightarrow X$,
    we may assume $0 \in Y_1$.
    Finally, let us write $f = T_a \circ \f$ with $a \in X(K)$
    and a surjective group endomorphism $\f$. 
    By \cref{lem:min-poly}(3), replacing $f$ with iterates depending only
    on $\dim X$, we may assume the cyclotomic factor of the
    minimal polynomial of $\f$ is of the form $(t-1)^d$ for some $d\in \Z_{\geq 0}$.

    In summary, it is enough to prove the existence of $s_0$ in the following setting:
    \begin{itemize}
        \item $f = T_a \circ \f$ where $P_\f(t) = (t-1)^dQ(t)$ for some 
        $d \in \Z_{\geq 0}$ and $Q$ having no cyclotomic factors;
        \item $Y(K) \subset Y$ is Zariski dense;
        \item We have an irreducible decomposition $Y = Y_1 \cup\cdots \cup Y_r$ such that
        $0 \in Y_1$ and $f(Y_i) = Y_{i+1}$ for $i=1, \dots, r-1$ and $f(Y_r)= Y_1$.
    \end{itemize}
    
    Let $A := \Stab(Y_1)^{\circ}$ be the identity component of the stabilizer of
    $Y_1$ in $X$. This is an abelian subvariety.
    Applying $f$ to the equality $Y_1 + A = Y_1$, we get $Y_2 + \f(A) = Y_2$.
    Repeating this, we get $Y_1 + \f^r(A) = Y_1$.
    In particular, we get $\f^r(A) = A$.
    By \cref{lem:period-of-absubvar}, there exists $n_0 \in \Z_{\geq 1}$ depending 
    only on $X$ such that $\f^{n_0}(A) = A$.

    We claim that we may replace $f$ with $f^{n_0}$ and 
    $Y$ with the $f^{n_0}$-orbit of $Y_1$.
    Indeed, let $W$ be the $f^{n_0}$-orbit of $Y_1$ and suppose that
    there is $s_0 \in \Z_{\geq 0}$ depending only on $X$ such that
    \begin{align}
        ((f^{n_0})^{-s_0 -1}(W) \smallsetminus (f^{n_0})^{-s_0}(W))(K) = \emptyset.
    \end{align}
    Take $x \in X(K)$ such that $f^s(x) \in Y$ for some $s\in \Z_{\geq 0}$.
    Then $f^s(x) \in W$ for some $s\in \Z_{\geq 0}$ as well.
    Write $s = q n_0 + l$ with $q \in \Z_{\geq 0}$ and $0 \leq l \leq n_0 -1$.
    Then $(f^{n_0})^q(f^l(x)) \in W$ implies $(f^{n_0})^{s_0}(f^l(x)) \in W$
    and therefore $f^{n_0s_0 + n_0 -1}(x) \in Y$.

    Now replacing $f$ with $f^{n_0}$ and $Y$ with $W$,
    we may assume $\f(A) = A$.
    Note that in this case, we have $\Stab(Y_i)^{\circ} = A$ for all $i=1,\dots, r$.
    Consider 
    \begin{equation}
        \begin{tikzcd}
            X \arrow[r,"f"] \arrow[d, "\pi", swap] &[1em] X \arrow[d,"\pi"] \\
            X / A 
            \arrow[r,"\overline{f} := T_{\pi(a)} \circ \overline{\f}" {yshift=-1ex},swap] & 
            X/A
        \end{tikzcd}
    \end{equation}
    where $\pi$ is the quotient morphism and
    $\overline{\f}$ is the group endomorphism induced by $\f$.
    Since $\pi(Y_1)(K)$ is Zariski dense in $\pi(Y_1)$, $\pi(Y_1)$
    is geometrically irreducible (cf.\ \cite[Proposition 2.3.26]{Poonen_book}).

    We also have $\Stab(\pi(Y_1)_{\overline{K}})^{\circ} = \Stab(\pi(Y_1))^{\circ}_{\overline{K}} = 0$.
    Thus by \cite[(3.7) Theorem]{Mo85}, $\pi(Y_1)_{\overline{K}}$
    is a variety of general type (possibly a single point).
    As $\pi(Y_1)$ is $\overline{f}$-periodic, there is $m \in \Z_{\geq 1}$
    such that $\overline{f}^m|_{\pi(Y_1)} = \id_{\pi(Y_1)}$.
    Applying this to $0 \in \pi(Y_1)$, we get
    \begin{align}\label{eq:sum-of-fpii}
        \sum_{i=0}^{m-1}\overline{\f}^i(\pi(a)) = 0.
    \end{align}
    In particular, we get $\overline{\f}^m|_{\pi(Y_1)} = \id_{\pi(Y_1)}$.
    Let $H \subset X/A$ be the abelian subvariety generated by
    the $\overline{\f}$-orbit of $\pi(Y_1)$.
    Then $H$ is $\overline{\f}$-invariant and
    $\overline{\f}|_H^m = \id_H$.
    Thus $P_{\overline{\f}|_H}(t) \mid t^m - 1$ and we also have
    $P_{\overline{\f}|_H} \mid P_{\overline{\f}} \mid P_{\f}$.
    Since the cyclotomic factor of $P_\f$ is $(t-1)^d$, we get 
    $P_{\overline{\f}|_H}(t) = t-1$, in other words, 
    $\overline{\f}|_H = \id_H$.

    Now let $B = (X/A) / (\overline{\f} -\id)(X/A)$.
    
    {\bf Case 1}. Suppose $\dim B \geq 1$.
    Let $\nu \colon X/A \longrightarrow B$ be the quotient morphism.
    The morphism $\overline{\f}$ descends to $\id$ on $B$ and thus 
    by \cref{eq:sum-of-fpii}, $\nu(\pi(a)) \in B(K)_{\rm tors}$.
    Note that we have $B = X/C$ where $C = A + (\f -\id)(X)$ 
    is an abelian subvariety.
    By \cite{Len96}, the isomorphic classes of the quotient of $X$ by an abelian subvariety are finite.
    Thus there is $m_0 \in \Z_{\geq 1}$ depending only on $X$ such that
    $m_0 \nu(\pi(a)) = 0$.
    Thus, we have the commutative diagram
    \begin{equation}
        \begin{tikzcd}
            X \arrow[rr,"f^{m_0}"] \arrow[dr,"\mu",swap] & & X 
            \arrow[ld, "\nu \circ \pi =: \mu"] \\
            & B &
        \end{tikzcd}
    \end{equation}
    where $\mu$ is the quotient morphism of $X$ by $C$.
    Now let $x \in X(K)$ be such that $f^{s}(x) \in Y$ for some $s \in \Z_{\geq 0}$.
    Then $(f^{m_0})^{s}(x) \in Y$ for some $s \in \Z_{\geq 0}$ as well.
    Note that $f^{m_0}$ induces
    \begin{align}
        f^{m_0} \colon \mu^{-1}(\mu(x)) = x+C \longrightarrow x+C.
    \end{align}
    Note also that $x+C$ can be equipped with an abelian variety structure 
    so that $x$ is the identity element so that it is isomorphic to $C$
    as abelian varieties.
    As $\dim B \geq 1$, we have $\dim C < \dim X$, 
    we can apply the induction hypothesis to
    $f^{m_0}|_{x+C}$ and $Y \cap (x+C)$.
    Then there is $s_0$ depending only on $C$
    such that $(f^{m_0})^{s_0}(x) \in Y$.
    Since $C$ is an abelian subvariety of $X$, by \cite{Len96} again, 
    we can take $s_0$ depending only on $X$.

    {\bf Case 2}.
    Suppose $B=0$.
    Then $\overline{\f} - \id$ is an isogeny and hence $H=0$.
    In particular, we have $\pi(Y_1) = 0$.
    Moreover, $P_{\overline{\f}}$ has no cyclotomic factor because 
    if it has, it must be the power of $t-1$.

    By \cref{eq:sum-of-fpii}, $\overline{f}^m = \overline{\f}^m$.
    Noting $\pi(Y_2) = \overline{f}(\pi(Y_1)) = \pi(a)$, we have
    $\overline{\f}^m(\pi(a))= \pi(a)$.
    Hence $\pi(a) \in (X/A)(K)_{\rm tors}$.

    Now let $x \in X(K)$ be such that $f^{s}(x) \in Y$ for some $s \in \Z_{\geq 0}$.
    Then $f^s(x) \in Y_1$ for some $s \in \Z_{\geq 1}$ as well.
    Then $\overline{f}^s(\pi(x)) = 0$ and this implies 
    \begin{align}
        \overline{\f}^s(\pi(x)) = -\sum_{i=0}^{s-1} \overline{\f}^i(\pi(a)).
    \end{align}
    The right-hand side is a torsion point, and thus $\pi(x) \in (X/A)(K)_{\rm tors}$.
    Since $\pi(a)$ is a torsion point, 
    we have $O_{\overline{f}}(\pi(x)) \subset (X/A)(K)_{\rm tors}$.
    Again by \cite{Len96}, there is $m_0 \in \Z_{\geq 1}$ depending only on $X$
    such that $\# X/A)(K)_{\rm tors} \leq m_0$.
    Then we have $\overline{f}^{m_0}(\pi(x)) \in \pi(Y)$ and therefore
    $f^{m_0}(x) \in \pi^{-1}(\pi(Y)) = Y + A = Y$.
    This completes the proof.
\end{proof}

In the proof of \cref{thm:uniform-PIQ-ab-var}, we use several finiteness 
properties of abelian varieties, namely finiteness of the set of torsion points,
finiteness of Mordell-Weil rank, and finiteness of abelian subvarieties \cite{Len96}.
As for the first one, there is a following strong uniformity conjecture. 

\begin{conjecture}\label{conj:general_tors_conj}
    Let $g \in \Z_{\geq 1}$ and $d \in \Z_{\geq 1}$.
    Then there exists $C \geq 1$ depending only on $g$ and $d$ such that
    for all abelian variety $X$ of dimension $g$ defined over a number field 
    $K$ with $[K : \Q] \leq d$, we have 
    \begin{align}
        \# X(K)_{\rm tors} \leq C.
    \end{align}
\end{conjecture}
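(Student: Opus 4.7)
The plan is constrained by the fact that \cref{conj:general_tors_conj} is the strong uniform boundedness conjecture for torsion points of abelian varieties, which is open in general and settled only for $g=1$ by Merel's theorem (extending Mazur's original result for $K=\Q$). The honest strategy is to imitate the Mazur--Kamienny--Merel approach and isolate the steps that currently break down for $g \geq 2$.

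First, I would reformulate the statement as an assertion about non-cuspidal $K$-rational points on moduli spaces with level structure, namely the Siegel modular varieties $\mathcal{A}_g(N)$. A uniform torsion bound is equivalent to saying that, for $N$ sufficiently large depending only on $g$ and $d$, $\mathcal{A}_g(N)(K)$ contains no non-cuspidal point whenever $[K:\Q]\leq d$. Next, I would attempt Kamienny's formal immersion argument: for each large $N$, construct a morphism $\mathcal{A}_g(N)^{(d)} \longrightarrow J$ where $J$ is an abelian variety over $\Q$ with $\mathrm{rank}\, J(\Q)=0$, and verify that this morphism is a formal immersion at a cusp modulo a suitable prime. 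In the $g=1$ case, the role of $J$ is played by the winding quotient of $J_1(N)$, whose Mordell--Weil rank over $\Q$ vanishes by the work of Kolyvagin--Logachev combined with the Gross--Zagier formula.

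The main obstacle, and the reason the conjecture remains open for $g\geq 2$, is the absence of an analog of the winding quotient for Siegel modular Jacobians. One needs abelian quotients of the Albanese of $\mathcal{A}_g(N)$ over $\Q$ with vanishing Mordell--Weil rank for infinitely many $N$, and producing these seems to require automorphic input for $\mathrm{GSp}_{2g}$ that is not currently available; the $p$-primary part in the supersingular reduction range is especially resistant. A more modest intermediate result is the Cadoret--Tamagawa uniform $\ell$-primary bound across one-parameter families, obtained via $\ell$-adic Galois representations, but extending it to the full torsion across all families and all primes appears to require fundamentally new ideas.
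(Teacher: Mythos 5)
You have correctly recognized that this statement is not a theorem of the paper at all: it is stated as \cref{conj:general_tors_conj}, the general uniform boundedness conjecture for torsion of abelian varieties, and the paper offers no proof, using it only as a hypothesis in \cref{prop:uniform_PIQ_tors_conj}. Your assessment — that it is known only for $g=1$ via Mazur--Kamienny--Merel and that the obstruction for $g\geq 2$ is the lack of a winding-quotient analogue with vanishing Mordell--Weil rank for Siegel modular varieties — is accurate, so there is nothing to compare and no gap to report beyond the fact that neither you nor the paper proves the statement.
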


If we assume this conjecture, we have the following.

\begin{proposition}\label{prop:uniform_PIQ_tors_conj}
    Assume \cref{conj:general_tors_conj}.
    Let $g \in \Z_{\geq 1}$ and $d \in \Z_{\geq 1}$.
    Then there exists $s_0 \geq 0$ depending only on $g$ and $d$ with the following property.
    Let $X$ be an abelian variety of dimension $g$ defined over a number field 
    $K$ with $[K : \Q] \leq d$.
    Let $f \colon X \longrightarrow X$ be a surjective morphism, and 
    let $Y \subset X$ be an $f$-invariant geometrically irreducible closed subvariety.
    Then
    \begin{align}
        ( f^{-s_0 -1}(Y) \smallsetminus f^{-s_0}(Y) )(K) = \emptyset.
    \end{align}
\end{proposition}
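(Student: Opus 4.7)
The plan is to adapt the proof of \cref{thm:uniform-PIQ-ab-var}, replacing every constant that previously depended on $X$ (via Lenstra--Oort \cite{Len96} or Mordell--Weil rank) by one depending only on $g$ and $d$, supplied by \cref{conj:general_tors_conj}. The geometric irreducibility of $Y$ is used to bypass \cref{lem:period-of-absubvar} (the main entry point of Lenstra--Oort in the earlier proof): since $f$ is a finite morphism with $f(Y) \subset Y$ and $Y$ is irreducible, one has $f(Y) = Y$, and a direct dimension count then gives $\f(A) = A$ for $A := \Stab(Y_{\overline K})^\circ$ (defined over $K$ by irreducibility).

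After the standard opening reductions (assume $Y(K) \ne \emptyset$, translate by a $K$-point so that $0 \in Y$, write $f = T_a \circ \f$ with $\f$ an isogeny, and replace $f$ by $f^{n_0}$ with $n_0 = n_0(g)$ from \cref{lem:min-poly}(3) so that the cyclotomic part of $P_\f$ is $(t-1)^m$ for some $m \geq 0$), I would pass to the quotient $\pi \colon X \to X/A$. The image $\pi(Y)$ has trivial connected stabilizer, hence by \cite[(3.7)]{Mo85} it is either a single point or a variety of general type. In the first case $Y = A$, and $f^s(x) \in A$ is equivalent to $\overline f^s(\pi(x)) = 0$; the ascending chain $\ker(\overline f^s) \cap (X/A)(K)$ lies in $(X/A)(K)_{\mathrm{tors}}$, whose size is bounded by some $C = C(g,d)$, so the chain stabilizes in at most $\log_2 C$ steps and yields a uniform $s_0$.

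In the general-type case I would mirror the analysis in the proof of \cref{thm:uniform-PIQ-ab-var}: define $B := (X/A)/(\overline\f - \id)(X/A)$, use the general-type property and the minimal-polynomial reduction to show $\overline\f|_H = \id_H$ on the subgroup $H \subset X/A$ generated by the $\overline\f$-orbit of $\pi(Y)$, and deduce both $\dim B \geq 1$ and $\nu(\pi(a)) \in B(K)_{\mathrm{tors}}$. By the conjecture, this point has order $m_1 \leq C(g,d)$, so $\mu := \nu \circ \pi$ satisfies $\mu \circ f^{m_1} = \mu$. Consequently $f^{m_1}$ preserves each fiber $x + C$ of $\mu$, where $C := A + (\f - \id)(X) \subset X$ is an abelian subvariety of dimension strictly less than $g$, and one hopes to induce on $g$ by restricting the dynamics to such fibers.

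The hard part will be executing this induction. Although $Y$ is geometrically irreducible, $Y \cap (x + C)$ typically is not, so the inductive hypothesis does not apply to the intersection directly. I would decompose $Y \cap (x+C)$ into its geometrically irreducible components, observe that $f^{m_1}$ permutes them, and apply the hypothesis componentwise after replacing $m_1$ by a multiple equal to the permutation period. Making this step uniform requires a bound, depending only on $g$ and $d$, on the number of components of $Y \cap (x+C)$ and hence on the permutation period; this should come from an intersection-theoretic degree computation on $X$ driven by the $\overline\f$-invariance of $H$ and $\pi(Y)$, and supplying such a bound is the principal technical step that remains.
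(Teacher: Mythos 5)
Your opening reductions are sound and essentially coincide with the paper's: irreducibility gives $f(Y)=Y$ and hence $\f(A)=A$ for $A=\Stab(Y)^{\circ}$ without invoking \cref{lem:period-of-absubvar}, and your degenerate case where $\pi(Y)$ is a point is correct (note that there $Y=A$ forces $a\in A$, so $\overline{f}=\overline{\f}$ is a homomorphism, which your kernel-chain argument tacitly uses). The genuine gap is exactly the step you flag at the end. In the general-type case you restrict to the fibers $x+C$ of $\mu$ and want to induct on $\dim X$, but the statement being proved, and hence the only inductive hypothesis available, requires the invariant subvariety to be geometrically irreducible, while $Y\cap(x+C)$ is in general reducible. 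Your proposed remedy, a bound depending only on $(g,d)$ on the number of its components via an intersection-theoretic degree count, cannot work as stated: there is no bound on $\deg Y$ in terms of $(g,d)$ (the invariant subvariety is arbitrary), so any such count grows without bound as $(X,f,Y)$ varies. This is not a cosmetic issue; the remark following \cref{prop:uniform_PIQ_tors_conj} says precisely that the authors do not know how to treat non-irreducible $Y$ uniformly (in \cref{thm:uniform-PIQ-ab-var} this is handled by \cref{lem:period-of-absubvar}, whose bound depends on a Mordell--Weil rank and hence on $X$), so your plan reduces the proposition to an open problem.

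The paper closes this by avoiding the induction altogether. Instead of your $C=A+(\f-\id)(X)$, it factors $P_{\overline{\f}}=Q\cdot R$ with $Q$ a product of cyclotomic polynomials and $R$ cyclotomic-free, sets $B:=R(\overline{\f})(X/A)$, and takes the further quotient $\nu\colon X/A\to (X/A)/B$ with induced map $h=T_{\nu(\pi(a))}\circ\psi$. The minimal polynomial of $\psi$ divides $R$, so $\psi^{n}-\id$ is an isogeny for every $n$; combined with the finite order of $\overline{f}$ on the general-type (or zero-dimensional) variety $\pi(Y)$, this forces $\nu(\pi(Y))$ to be the single $h$-fixed point $\nu(\pi(y))$, whence $\pi(Y)\subset\pi(y)+B$, and $\overline{f}$ restricts to an automorphism of $\pi(y)+B$ because the minimal polynomial of $\overline{\f}|_{B}$ divides $Q$, whose constant term is a unit. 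If $f^{s}(x)\in Y$, the whole $h$-orbit of $\nu(\pi(x))$ up to time $s$ lies in the set $S$ of $K$-points of $(X/A)/B$ eventually hitting $\nu(\pi(y))$; any two elements of $S$ differ by a point killed by an iterate of $\psi$, hence by a torsion point, so $\# S\le \#\bigl((X/A)/B\bigr)(K)_{\mathrm{tors}}\le s_0(g,d)$ by \cref{conj:general_tors_conj}. This yields $h^{s_0}(\nu(\pi(x)))=\nu(\pi(y))$, i.e.\ $\overline{f}^{s_0}(\pi(x))\in\pi(y)+B$, and the automorphism property together with $\overline{f}(\pi(Y))=\pi(Y)$ then gives $\overline{f}^{s_0}(\pi(x))\in\pi(Y)$, hence $f^{s_0}(x)\in\pi^{-1}(\pi(Y))=Y+A=Y$. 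This one-shot argument needs no iterate replacement via \cref{lem:min-poly}, no induction on $g$, and no intersection of $Y$ with fibers; substituting it for your fiber-restriction step is the repair.
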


\begin{proof}
    Let $X, f, Y$ be as in the statement.
    We may assume there exists at least one $K$-rational point $y \in Y(K)$.
    Let $A$ be the identity component of the stabilizer $\Stab(Y)$ of $Y$ in $X$.
    Let us write $f = T_a \circ \f$ with $a \in X(K)$ and a group endomorphism 
    $\f \colon X \longrightarrow X$.
    Then we have $\f(A) = A$.
    Indeed, 
    \begin{align}
        Y + \f(A) = (a + \f(Y)) + \f(A) = a + \f(Y + A) = a + \f(Y) = Y
    \end{align}
    and this implies $\f(A) = A$.

    Consider the following commutative diagram:
    \begin{equation}
        \begin{tikzcd}
            X \arrow[r,"f"] \arrow[d,"\pi",swap] & X \arrow[d,"\pi"] \\
            X/A \arrow[r,"\overline{f}:= T_{\pi(a)}\circ \overline{\f} " {yshift=-1ex},swap] & X/A
        \end{tikzcd}
    \end{equation}
    where $\pi$ is the quotient morphism and $\overline{\f}$ is the group morphism
    induced by $\f$.
    Then $\Stab(\pi(Y)_{\overline{K}}) = \Stab(\pi(Y))_{\overline{K}}$ is finite.
    Since $\pi(Y)$ is geometrically irreducible, by \cite[(3.7) Theorem]{Mo85},
    $\pi(Y)_{\overline{K}}$ is a variety of general type.
    Thus there is $n \in \Z_{\geq 1}$ such that $\overline{f}|_{\pi(Y)}^n = \id_{\pi(Y)}$.

    Let $P_{\overline{\f}}$ be the minimal polynomial of $\overline{\f}$.
    Let us factor $P_{\overline{\f}}(t) = Q(t) R(t)$ such that
    $Q$ is a product of cyclotomic polynomials and $R$ has no cyclotomic factors.
    Then set $B = R(\overline{\f})(X/A)$.
    Since $B$ is $\overline{\f}$-invariant and thus we can form the following commutative diagram:
    \begin{equation}
        \begin{tikzcd}
            X/A \arrow[r,"\overline{f}"] \arrow[d,"\nu",swap] & X/A \arrow[d,"\nu"]\\
            (X/A)/B \arrow[r,"h:= T_{\nu(\pi(a))} \circ \psi"{yshift=-1ex},swap] & (X/A)/B
        \end{tikzcd}
    \end{equation}
    where $\nu$ is the quotient morphism and $\psi$ is the morphism induced by $\overline{\f}$.
    Then, first note that the minimal polynomial $P_{\psi}(t)$ of $\psi$ is equal to $R(t)$.
    In particular, $\psi^n - \id$ is an isogeny.
    As $\overline{f}|_{\pi(Y)}^n = \id_{\pi(Y)}$, we have $h^n = \id$ on $\nu(\pi(Y))$.
    This implies 
    \begin{align}
        (\psi^n - \id)(\nu(\pi(Y))) = - \sum_{i=0}^{n-1}\psi^i(\nu(\pi(a))).
    \end{align}
    Thus we get $\dim \nu(\pi(Y))=0$.
    Since $Y$ is irreducible, so is $\nu(\pi(Y))$ and hence $\nu(\pi(Y)) = \{ \nu(\pi(y))\}$.
    In particular, we have 
    \begin{align}
        &h(\nu(\pi(y))) = \nu(\pi(y)) \\
        &\pi(Y) \subset \pi(y) + B.
    \end{align}
    Moreover $\pi(y) + B$ is invariant under $\overline{f}$ and the induced self-morphism
    on $\pi(y) + B$ is an isomorphism.
    Indeed, we have the following commutative diagram
    \begin{equation}
        \begin{tikzcd}
            \pi(y) + B \arrow[r,"\overline{f}"]  & \pi(y) + B \\
            B \arrow[u,"T_{\pi(y)}"] \arrow[r,"\overline{\f}",swap] & B \arrow[u,"T_{\overline{\f}(\pi(y))+\pi(a)}",swap]
        \end{tikzcd}
    \end{equation}
    and the bottom arrow $\overline{\f}$ action on $B$ is an isomorphism because 
    its minimal polynomial is $Q$.

    Now let $x \in X(K)$ and suppose $f^s(x) \in Y$ for some $s \in \Z_{\geq 0}$.
    Then we have $h^s(\nu(\pi(x))) = \nu(\pi(y))$.
    Let us consider the following set
    \begin{align}
        S = \{ z \in ((X/A)/B)(K) \mid \text{$h^t(z) = \nu(\pi(y))$ for some $t \in \Z_{\geq 0}$}  \}.
    \end{align}
    For any $z_1, z_2 \in S$, we have $\psi^t(z_1 - z_2) = 0$.
    Since $\psi$ is an isogeny, $z_1 - z_2$ is a torsion point and hence 
    $\# S \leq ((X/A)/B)(K)_{\rm tors}$.
    By \cref{conj:general_tors_conj}, there is $s_0 \in \Z_{\geq 1}$
    depending only on $g, d$ such that $\# S \leq s_0$.
    Then we have $h^{s_0}(\nu(\pi(x))) = \nu(\pi(y))$.
    This implies $\overline{f}^{s_0}(\pi(x)) \in \pi(y) + B$.
    Since $\overline{f}^s(\pi(x)) \in \pi(Y)$ and $\overline{f}|_{\pi(y) + B}$ is an isomorphism
    (as well as $\overline{f}(\pi(Y)) = \pi(Y)$), 
    we must have $\overline{f}^{s_0}(\pi(x)) \in \pi(Y)$.
    Therefore we get
    \begin{align}
        f^{s_0}(x) \in \pi^{-1}(\pi(Y)) = Y + A = Y
    \end{align}
    and we are done.
\end{proof}
    
\begin{remark}
    We do not know if we can get rid of the geometric irreducibility of $Y$.
    In the proof of \cref{thm:uniform-PIQ-ab-var}, we use uniform bound of the period
    of periodic abelian subvarieties to deal with possibly non-irreducible $Y$. 
    Our bound of the periods depends on the Mordell-Weil rank
    of $X(L)$ where $L$ is a finite extension of $K$, which also depends on $X$. 
\end{remark}

\section{Preliminaries on polydisc algebras}\label{sec:Preliminaries on polydisc algebras}

We prepare some preliminary results on polydisc algebras,
especially on prime factorization in the polydisc algebras.
The contents of this section might be well-known to experts, but we include them here for completeness.
For basic materials on Tate algebras or strictly affinoid algebras,
see for example \cite{BGR_NA_analysis,Menares_Tate_alg}.

Let $(K,|\ |)$ be a complete Non-archimedean valued field.
For $r = (r_1, \dots, r_n) \in \R_{>0}^n$, the polydisc algebra with radii $r$ is 
\begin{align}
    K\{ r^{-1}T \} &= K\{ r_1^{-1}T_1, \dots, r_n^{-1}T_n \}\\
    & = \biggl\{ \sum_{I \in \Z_{\geq 0}^n} a_I T^I  \in K\lb T \rb \mathrel{\bigg|}  \lim_{I \to \infty}|a_I|r^I =0 \biggr\}.
\end{align}
Here we use the standard multi index notation: for $I = (i_1, \dots, i_n)$,
$T^I := T_1^{i_1}\cdots T_n^{i_n}$ and $r^I = r_1^{i_1}\cdots r_n^{i_n}$.
The Gauss norm on $K\{ r^{-1}T  \}$ is
\begin{align}
    \biggl\|\sum_{I \in \Z_{\geq 0}^n} a_I T^I\biggr\|_{ K\{ r^{-1}T  \}}
    = \max\left\{ |a_I|r^I \ \middle|\  I \in \Z_{\geq 0}^n \right\}.
\end{align}
We drop the subscript and simply write $\|\ \|$ when there should be no confusion.
For $I = (i_1,\dots, i_n)\in \Z_{\geq 0}^n$, we set $|I|= i_1 + \cdots + i_n$.

\begin{definition}
    For $f \in K\{ r^{-1}T \} \smallsetminus \{0\}$, we define 
    \begin{align}
        \ord(f)=\ord_{K\{ r^{-1}T \}}(f) 
        := \max \left\{ |I| \ \middle|\  |a_I|r^I = \|f\| \right\}.
    \end{align}
    When $f=0$, we set $\ord_{K\{ r^{-1}T \}}(0) = \infty$.
\end{definition}

\begin{lemma}\label{lem:property-of-ord}
    Let $K$ be a complete Non-archimedean valued field and $r = (r_1, \dots, r_n) \in \R_{>0}^n$.
    \begin{enumerate}
        \item For $f,g \in  K\{ r^{-1}T \}$, we have $\ord(fg) = \ord(f) + \ord(g)$.
        \item For $f \in  K\{ r^{-1}T \}$, $f$ is unit if and only if $\ord(f) = 0$.
        \item For any finite extension $L$ of $K$ equipped with the uniquely extended absolute value
        and $f \in K\{ r^{-1}T \}$, we have
        \begin{align}
            \ord_{L\{ r^{-1}T \}}(f) = \ord_{K\{ r^{-1}T \}}(f).
        \end{align}
        \item For $f \in K\{ r^{-1}T \} \smallsetminus \{0\}$, the following set is finite:
        \begin{align}
            \I(f) 
            := \left\{ I \in \Z_{\geq 0}^n 
            \ \middle|\  \exists s \in \prod_{i=1}^n (0,r_i], |a_I|s^I = \|f\|_{K\{ s^{-1}T \}} \right\}.
        \end{align}
        In particular, the set of orders
        \begin{align}
            \left\{ \ord_{K\{ s^{-1}T \}}(f) \ \middle|\ s \in \prod_{i=1}^n (0,r_i] \right\}
        \end{align}
        is finite.
    \end{enumerate}
\end{lemma}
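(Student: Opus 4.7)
The plan is to handle (1)--(3) by reducing to standard facts about the Tate algebra, and to handle (4) via Dickson's lemma applied to $\Supp(f)$.

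For (1) and (2), I will first reduce to the case $r = (1, \dots, 1)$. After possibly replacing $K$ by a finite extension (which is harmless by part (3)), I may assume $r_i = |c_i|$ for some $c_i \in K^{\times}$, and then the substitution $T_i \mapsto c_i T_i$ gives an isometric isomorphism $K\{r^{-1}T\} \xrightarrow{\sim} K\{T\}$ preserving both the Gauss norm and $\ord$. In the Tate algebra $K\{T\}$, I will pass to the reduction modulo the unit ball in $K$: for $f$ with $\|f\|=1$ (after rescaling by an element of $K^{\times}$ of the appropriate absolute value, which is possible because $\|f\|$ is actually attained and lies in $|K^{\times}|$), the image $\bar f := \sum_{|a_I|=1} \bar a_I\, T^I$ is a well-defined element of the polynomial ring $\tilde K[T_1, \dots, T_n]$ over the residue field, a finite sum because $|a_I| \to 0$, with total degree equal to $\ord(f)$. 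Since $\tilde K[T_1, \dots, T_n]$ is an integral domain with additive total degree, the identity $\overline{fg} = \bar f \bar g$ simultaneously yields multiplicativity of the Gauss norm and of $\ord$. For (2), $\ord(f)=0$ is equivalent to $\bar f$ being a nonzero constant, i.e.\ $|a_0| = \|f\|$ strictly dominates; then $f = a_0(1+h)$ with $\|h\|<1$ is invertible via the convergent series $\sum_{n \geq 0}(-h)^n$, and the converse follows from multiplicativity of $\ord$ together with $\ord(1)=0$.

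For (3), the unique extension of a non-archimedean absolute value from a complete field $K$ to a finite extension $L$ preserves $|a_I|$ for $a_I \in K$. Hence $\|f\|_{L\{r^{-1}T\}} = \|f\|_{K\{r^{-1}T\}}$, the set $\{I : |a_I|r^I = \|f\|\}$ of dominant indices is identical in both polydisc algebras, and so $\ord_{L\{r^{-1}T\}}(f) = \ord_{K\{r^{-1}T\}}(f)$.

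The main content is (4), which I will prove via Dickson's lemma: the set of minimal elements of $\Supp(f) \subseteq \Z^n_{\geq 0}$ under the componentwise partial order is finite, say $J_1, \dots, J_m$. For any $I \in \I(f)$ with witness $s \in \prod_i(0, r_i]$, we have $I \in \Supp(f)$, so $I \geq J_k$ componentwise for some $k$. The dominance inequality $|a_I|s^I \geq |a_{J_k}|s^{J_k}$ rearranges as
\begin{align*}
|a_I|r^I \;\geq\; |a_{J_k}|r^{J_k} \cdot (r/s)^{I - J_k} \;\geq\; |a_{J_k}|r^{J_k},
\end{align*}
using that $r/s \geq 1$ and $I - J_k \geq 0$ componentwise, so the extra factor is at least $1$. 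Since $|a_I|r^I \to 0$, for each fixed $k$ only finitely many $I$ can satisfy this lower bound, and $\I(f)$ is a finite union of such finite sets. The finiteness of the set of orders then follows immediately, since for every $s$ the value $\ord_{K\{s^{-1}T\}}(f)$ lies in the finite set $\{|I| : I \in \I(f)\}$.

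The main subtlety I anticipate is in (1): if $f$ and $g$ have several dominant indices achieving the maximum $|I|$, the top-degree terms of $fg$ could in principle cancel, and a naive argument on leading coefficients would not directly give $\ord(fg) = \ord(f) + \ord(g)$. Passing to the residue polynomial ring $\tilde K[T_1, \dots, T_n]$ cleanly handles this, because that ring is an integral domain where total degree is strictly additive on products of nonzero elements.
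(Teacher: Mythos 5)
Your part (4) is correct and is in fact a genuinely different (and shorter) argument than the paper's: the paper proves (4) by induction on $n$, using the upward cones $C(I)$ and the Hilbert basis theorem to build an infinite descending chain of cones and derive a contradiction, whereas you apply Dickson's lemma once to $\Supp(f)$ and then use the monotonicity estimate $(r/s)^{I-J_k}\geq 1$ to show that every $I \in \I(f)$ already satisfies $|a_I|r^I \geq \min_k |a_{J_k}|r^{J_k} > 0$, so finiteness follows at once from $|a_I|r^I \to 0$. That chain of inequalities is valid, and it cleanly eliminates the induction; part (3) is the same trivial observation as in the paper.

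There is, however, a genuine gap in your treatment of (1) and (2). The opening reduction ``after possibly replacing $K$ by a finite extension I may assume $r_i = |c_i|$ for some $c_i \in K^{\times}$'' is not available for arbitrary $r \in \R_{>0}^n$: the value group of any finite (indeed any algebraic) extension $L$ of $K$ is contained in the divisible closure of $|K^{\times}|$ inside $\R_{>0}$, so for instance with $K = \Q_p$ and $r_1$ not a rational power of $p$ there is no such extension, and your entire argument for (1)--(2) (rescaling to the unit Tate algebra and reducing modulo the maximal ideal to $\widetilde{K}[T_1,\dots,T_n]$) never reaches this case, while the lemma is stated for all positive real radii. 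The gap is repairable in two ways: either prove (1) directly in $K\{r^{-1}T\}$ for arbitrary radii --- this is what the paper does, splitting $f$ and $g$ into homogeneous components and comparing the degree-$(d+e)$ component of $fg$ with the higher ones, using multiplicativity of the Gauss norm, which holds for arbitrary radii by the usual Gauss-lemma argument --- or base change to a complete valued extension of $K$ whose value group contains the $r_i$ (e.g.\ the completion of a rational function field under a Gauss-type absolute value); but such an extension is not finite, so your part (3) as stated does not cover it, and you would need its (easy, but unproved here) extension to arbitrary complete valued extension fields. As written, the proof of (1)--(2) only establishes the lemma when each $r_i$ lies in the divisible closure of $|K^{\times}|$.
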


\begin{proof}
    (1)
    Let us write $f = \sum_{k \geq 0}f_k$, $g = \sum_{l \geq 0}g_l$
    where $f_k, g_l$ are homogeneous part of degree $k,l$ respectively.
    Let $d = \ord (f), e = \ord (g)$.
    Then, by the standard argument, we have
    \begin{align}
        \left\|\sum_{i=0}^{d+e} f_i g_{d+e -i}\right\| &= \left\| f_d g_e\right\| = \left\|fg\right\|,\\
        \left\|\sum_{i=0}^{m} f_i g_{m -i}\right\| &< \left\| f_d g_e\right\|\quad (\text{for } m > d + e)
    \end{align}
    and thus $\ord(fg) = d + e$.

    (2)
    If $f$ is a unit, then $0=\ord(1)=\ord(f f^{-1}) = \ord(f) + \ord(f^{-1})$.
    This implies $\ord(f) = 0$.
    Suppose $\ord(f) = 0$.
    Then $f(0) \neq 0$ and we can write
    \begin{align}
        f = f(0) \left( 1 + g\right)
    \end{align}
    with $g$ such that $\|g\| < 1$.
    Thus, $f$ is a unit.

    (3) This is obvious from the definition.

    (4)
    For $I = (i_1, \dots, i_n) \in \Z_{\geq 0}^n$, let 
    \begin{align}
        C(I) = \{ (j_1,\dots, j_n) \in \Z_{\geq 0}^n \mid \text{$j_k \geq i_k$ for $k=1,\dots,n$} \}.
    \end{align}
    Note that for any subset $\I \subset \Z_{\geq 0}^n$, there are finitely many elements $I_1, \dots, I_t$ such that 
    \begin{align}
        \bigcup_{I \in \I} C(I) = \bigcup_{k=1}^t C(I_k).
    \end{align}
    To see this, note that $C(I)$ is exactly the set of exponents of monomials
    contained in the ideal generated by $x_1^{i_1}\cdots x_n^{i_n}$ 
    in the polynomial ring $\Q[x_1,\dots,x_n]$.
    The left-hand side is the set of exponents of monomials contained in 
    the ideal generated by $(x^I)_{I \in \I}$, but the Hilbert basis theorem says
    this ideal is generated by finitely many $x^I$'s.

    Now, we go back to our situation. 
    We prove (4) by induction on $n$.
    The statement is clear when $n = 1$.
    Suppose $n \geq 2$ and $\#\I(f)= \infty$.
    By the above discussion, there are $I_1,\dots, I_t \in \I(f)$ such that
    \begin{align}
        \I(f) \subset \bigcup_{I \in \I(f)} C(I) = \bigcup_{k=1}^t C(I_k).
    \end{align}
    Since $\#\I(f)= \infty$, there is $I_k$ such that
    \begin{align}
        \# \I(f) \cap C(I_k) = \infty.
    \end{align}
    Define $J_1 = I_k$.
    Next,
    \begin{align}
         \bigcup_{I \in \I(f) \cap C(J_1) \smallsetminus \{J_1\}}C(I)
    \end{align}
    is also a finite union, and thus there is 
    $J_2 \in \I(f) \cap C(J_1) \smallsetminus \{J_1\}$ 
    such that $\# \left(\I(f) \cap C(J_1) \smallsetminus \{J_1\}\right) \cap C(J_2)=\infty $.
    Repeat this and we get a sequence $(J_k)_{k=1}^\infty$ such that
    \begin{align}\label{seq:Jk}
        &J_k \in \I(f) \ \text{for $k = 1,2,\dots$}\\
        &C(J_1) \supsetneq C(J_2) \supsetneq \cdots.
    \end{align}
    Now let us write $f = \sum_I a_I T^I$ and take $I_0 \in \Z_{\geq 0}^n$ such that 
    \begin{align}
        |a_{I_0}|r^{I_0} = \|f\|_{K\{r^{-1}T\}}\ \text{and}\ 
        |I_0| = \ord_{K\{r^{-1}T\}}(f). 
    \end{align}
    Then we have $\I(f) \cap C(I_0) = \{I_0\}$.
    Indeed, if $J \in C(I_0) \smallsetminus \{I_0\}$ and $s \in \prod_{i=1}^n (0,r_i]$, then
    \begin{align}
        |a_J|s^J = |a_J|r^J \frac{s^J}{r^J} < |a_{I_0}|r^{I_0} \frac{s^J}{r^J}
        = |a_{I_0}|s^{I_0}\frac{r^{I_0}}{s^{I_0}} \frac{s^J}{r^J} \leq |a_{I_0}|s^{I_0}.
    \end{align}
    By \cref{seq:Jk}, we have $J_k \not\in C(I_0)$ for all $k$.
    By replacing it with a subsequence, we may assume that one of the coordinates of 
    $J_k$ are constant. By changing coordinates, we may assume the last coordinates are 
    constant. Let us write $J_k = (J_k', j)$ where $J_k' \in \Z_{\geq 0}^{n-1}$ and
    $j \in \Z_{\geq 0}$.
    Consider 
    \begin{align}
        g = \sum_{k=1}^\infty a_{J_k}T^{J_k} \frac{1}{T_n^j}
        \in K\{r_1^{-1}T_1, \dots, r_{n-1}^{-1}T_{n-1}\}.
    \end{align}
    Since $J_k \in \I(f)$, we have $J_k' \in \I(g)$.
    This implies that $\I(g)$ is infinite, which contradicts the induction hypothesis.
\end{proof}

\begin{corollary}\label{cor:prime-fac-bdd}
    Let $K$ be a complete Non-archimedean valued field 
    and $r = (r_1, \dots, r_n) \in \R_{>0}^n$.
    Let $f \in K\{ r^{-1}T \} \smallsetminus \{0\}$.
    Then there is $C > 0$ with the following property: 
    for all finite extension $L$ of $K$ with the uniquely extended absolute value and
    for all $s = (s_1,\dots , s_n) \in \prod_{i = 1}^n (0,r_i]$ with $s_i \in |L|$ for $i=1,\dots n$, we have
    \begin{align}
        \left(\txt{the number of prime factors counted with multiplicity \\ of $f$ as an element of $L\{s^{-1}T\}$}\right) \leq C.
    \end{align}
    Here note that $L\{s^{-1}T\}$ is UFD, because $s_i \in |L|$
    and hence $L\{s^{-1}T\} \simeq L\{T\}$.
    (cf.\ \cite[(5.2.6) Theorem 1]{BGR_NA_analysis}.)
\end{corollary}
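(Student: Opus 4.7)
The plan is to combine the four parts of \cref{lem:property-of-ord} in essentially the obvious way. Set
\begin{align}
C := \max \left\{ \ord_{K\{s^{-1}T\}}(f) \ \middle| \ s \in \prod_{i=1}^n (0, r_i] \right\},
\end{align}
which is a finite non-negative integer by \cref{lem:property-of-ord}(4). I claim this $C$ works.

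Fix a finite extension $L/K$ with its unique extension of the absolute value, and fix $s \in \prod_{i=1}^n (0, r_i]$ with each $s_i \in |L|$. Under these hypotheses the paper has already noted that $L\{s^{-1}T\}$ is a UFD, so $f$ admits a factorization $f = u \cdot p_1 \cdots p_k$ in $L\{s^{-1}T\}$ where $u$ is a unit and $p_1, \dots, p_k$ are primes (not necessarily distinct), with $k$ equal to the number of prime factors counted with multiplicity. By \cref{lem:property-of-ord}(1),
\begin{align}
\ord_{L\{s^{-1}T\}}(f) = \ord_{L\{s^{-1}T\}}(u) + \sum_{i=1}^k \ord_{L\{s^{-1}T\}}(p_i).
\end{align}
By \cref{lem:property-of-ord}(2), the unit term vanishes and each $\ord_{L\{s^{-1}T\}}(p_i) \geq 1$, so $k \leq \ord_{L\{s^{-1}T\}}(f)$.

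It remains to bound $\ord_{L\{s^{-1}T\}}(f)$ by $C$. \cref{lem:property-of-ord}(3) is stated with the original radii $r$, but its proof (being immediate from the definition of the Gauss norm as a maximum of coefficient absolute values times $s^I$) is insensitive to which radii are used. Hence $\ord_{L\{s^{-1}T\}}(f) = \ord_{K\{s^{-1}T\}}(f) \leq C$ by the definition of $C$, and therefore $k \leq C$. Since the entire argument is routine once the four parts of \cref{lem:property-of-ord} are in hand, I do not expect any real obstacle; the only point that deserves a sentence of justification is the variant of part (3) for general $s$, which one simply reads off from the definition of the Gauss norm.
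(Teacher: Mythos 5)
Your proposal is correct and follows essentially the same route as the paper: factor $f$ in $L\{s^{-1}T\}$, use parts (1)--(3) of \cref{lem:property-of-ord} to bound the number of prime factors by $\ord_{K\{s^{-1}T\}}(f)$, and then use part (4) to bound this order uniformly in $s$. Your side remark about part (3) is fine but not really needed, since that lemma is stated for an arbitrary radius vector and so applies directly with $s$ in place of $r$ (noting $f \in K\{r^{-1}T\} \subset K\{s^{-1}T\}$).
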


\begin{proof}
    Let $L$ and $s$ as in the statement.
    Let $f = P_1 \dots P_l$ be the prime factorization in $L\{s^{-1}T\}$.
    Then, by \cref{lem:property-of-ord} (1), (2), and (3), we obtain
    \begin{align}
        \ord_{K\{s^{-1}T\}}(f) = \ord_{L\{s^{-1}T\}}(f)
        = \sum_{i=1}^l \ord_{L\{s^{-1}T\}}(P_i) \geq l.
    \end{align}
    By \cref{lem:property-of-ord} (4), $\ord_{K\{s^{-1}T\}}(f) $ is bounded 
    when $s$ varies, we are done.
\end{proof}

\begin{corollary}\label{cor:prime-fac-fld-ext}
    Let $K$ be a complete Non-archimedean valued field.
    Let $r = (r_1, \dots, r_n) \in \R_{>0}^n$ with
    $r_i \in |K|$ for $i=1,\dots,n$.
    Let $f \in K\{ r^{-1}T \} \smallsetminus \{0\}$.
    Then there is a finite extension $L$ of $K$ such that 
    for all finite extensions $L'$ of $L$, the prime factorization
    of $f$ in $L\{r^{-1}T\}$ is the prime factorization of
    $f$ in $L'\{r^{-1}T\}$ as well.
\end{corollary}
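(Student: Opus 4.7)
The plan is to deduce this corollary from the uniform bound on the number of prime factors provided by \cref{cor:prime-fac-bdd}, combined with a monotonicity argument that forces the factorization to stabilize. Since the statement is purely existential, I will not try to construct $L$ effectively.

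For any finite extension $M/K$, the hypothesis $r_i \in |K| \subset |M|$ ensures that $M\{r^{-1}T\} \simeq M\{T\}$ is a UFD. Let $n(M)$ denote the number of prime factors of $f$ in $M\{r^{-1}T\}$, counted with multiplicity. By \cref{cor:prime-fac-bdd} applied with $s = r$, there exists a constant $C$, depending only on $f$, such that $n(M) \leq C$ for every finite extension $M/K$.

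The key observation is that $n$ is monotone under further finite extensions. Indeed, if $M \subset M'$ is a finite extension and $f = P_1 \cdots P_l$ is the prime factorization in $M\{r^{-1}T\}$, then each $P_i$ admits a prime factorization $P_i = Q_{i,1} \cdots Q_{i,m_i}$ in $M'\{r^{-1}T\}$, yielding a prime factorization
\begin{align}
    f = \prod_{i=1}^{l} \prod_{j=1}^{m_i} Q_{i,j}
\end{align}
of $f$ in $M'\{r^{-1}T\}$, so that $n(M') = \sum_{i} m_i \geq l = n(M)$. Equality $n(M') = n(M)$ holds exactly when $m_i = 1$ for every $i$, that is, when each $P_i$ remains prime (up to units) in $M'\{r^{-1}T\}$; this is precisely the statement that the prime factorization of $f$ in $M\{r^{-1}T\}$ is also the prime factorization in $M'\{r^{-1}T\}$.

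To conclude, since the set of values $\{n(M) \mid M/K \text{ finite}\}$ consists of non-negative integers bounded by $C$, it admits a maximum, which is realized by some finite extension $L/K$. For every finite extension $L'/L$, monotonicity and maximality give $n(L) \leq n(L') \leq n(L)$, hence $n(L') = n(L)$, and by the preceding characterization the prime factorization of $f$ in $L\{r^{-1}T\}$ coincides with that in $L'\{r^{-1}T\}$. The only non-routine input is \cref{cor:prime-fac-bdd}; once that uniform bound is in place, the monotonicity and the pigeonhole-style argument make the proof essentially immediate, so no serious obstacle is expected.
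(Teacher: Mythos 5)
Your argument is essentially the paper's: bound the number of prime factors of $f$ uniformly over all finite extensions of $K$ by \cref{cor:prime-fac-bdd}, take $L$ attaining the maximum, and observe that factorizations can only refine under further extension, so maximality forces the factorization to persist. The one step you leave unjustified is the monotonicity $n(M') \geq n(M)$: it requires $m_i \geq 1$ for every $i$, i.e.\ that each prime factor $P_i$ of $f$ over $M$ remains a \emph{non-unit} in $M'\{r^{-1}T\}$ (a unit has the empty factorization, so if some $P_i$ became a unit over $M'$ the count could drop and both your monotonicity and your equality criterion would break down). This persistence of non-unit-ness is true, and it is precisely the point the paper makes explicit via \cref{lem:property-of-ord}: the order function is unchanged under finite extension of the ground field and vanishes exactly on units, so $\ord_{M'\{r^{-1}T\}}(P_i) = \ord_{M\{r^{-1}T\}}(P_i) > 0$. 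With that one sentence added, your proof coincides with the paper's.
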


\begin{proof}
    By \cref{cor:prime-fac-bdd}, the number of prime factors of $f$
    as an element of $L\{r^{-1}T\}$ are bounded when $L$ varies finite extensions of $K$.
    Take $L$ such that the number of prime factors attains the maximum.
    Let $f=P_1 \dots P_l$ be the prime factorization in $L\{r^{-1}T\}$.
    For any finite extension $L'$ of $L$, every $P_i$ is not a unit
    in $L'\{r^{-1}T\}$ because $\ord_{L'\{r^{-1}T\}}(P_i) = \ord_{L\{r^{-1}T\}}(P_i) >0$.
    Therefore, $f=P_1 \dots P_l$ must be the prime factorization of $f$
    as an element of $L'\{r^{-1}T\}$.
\end{proof}

{\bf Asymptotic behavior of prime factorization}

Let $K$ be a complete Non-archimedean valued field as before.
Let $r = (r_1, \dots, r_n) \in \R_{>0}^n$ and suppose $r_i \in |K|$ for $i=1,\dots n$.
Fix a sequence of finite extensions of fields
\begin{align}
    K = K_0 \subset K_1 \subset K_2 \subset \cdots
\end{align}
and equip $K_m$ with the uniquely extended absolute values for $m \in \Z_{\geq 1}$.
Fix also a sequence of radii
\begin{align}
    \{ s^{(m)} = (s_1^{(m)}, \dots, s_n^{(m)}) \}_{m=0}^{\infty}
\end{align}
in $\Z_{>0}^n$ such that 
\begin{align}
    &r_j = s_j^{(0)} \geq s_j^{(1)} \geq s_j^{(2)} \geq \cdots \quad (j = 1,2, \ldots, n)\\
    &s_j^{(m)} \in |K_m| \quad (\text{$j = 1,2, \ldots, n$ and  $m\geq 0$}).
\end{align}
We set
\begin{align}
    R_m = K_m \{(s_1^{(m)})^{-1}T_1, \dots, (s_n^{(m)})^{-1}T_n\}.
\end{align}
Note that $R_m$ are UFD.
We have a sequence of inclusions
\begin{align}
    R_0 \xlongrightarrow{i_1} R_1 \xlongrightarrow{i_2} R_2 \xlongrightarrow{i_3} \cdots.
\end{align}
An element $u \in R_m$ for some $m$ is said to be eventually unit if there is some $m' \geq m$
such that $u$ is unit in $R_{m'}$.
In this case, we say $u$ is eventually unit with respect to
$(R_m)_m$.

\begin{lemma}\label{lem:prim-fact-ev-stab}
    Under the setting above, let $f \in R_0\setminus{0}$.
    Let $f_m$ be the image of $f$ in $R_m$.
    Then there is $m_0 \geq 0$ such that
    \begin{align}
        f_{m_0} = u g_1 \cdots g_l
    \end{align}
    for some $u, g_1, \dots, g_l \in R_{m_{0}}$ with the following properties:
    \begin{itemize}
        \item $u$ is eventually unit:
        \item for any $m \geq m_0$, the image of $g_j$ in $R_m$ is of the form
        \begin{align}
            v h
        \end{align}
        where $v \in R_m$ is eventually unit and $h \in R_m$ is a prime element.
    \end{itemize}
\end{lemma}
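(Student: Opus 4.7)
The plan is to find a monotone non-decreasing integer-valued invariant of the prime factorization of $f_m$ in $R_m$ that is uniformly bounded in $m$, and extract the desired factorization at its stabilization point. Since each $R_m$ is a UFD (our hypothesis $s_j^{(m)} \in |K_m|$ puts us in the situation of the parenthetical in \cref{cor:prime-fac-bdd}), we may write $f_m = c_m \prod_i p_{m,i}^{a_{m,i}}$ with $c_m \in R_m^\times$ and each $p_{m,i}$ a prime element. Recall that an element of $R_m$ is \emph{eventually unit} if its image in $R_{m'}$ becomes a unit for some $m'\geq m$. The first technical step is to observe that eventual unit-ness is well-behaved with respect to prime factorization along the tower $(R_m)$: if $p \in R_m$ is prime and its image in $R_{m+1}$ factors (up to a unit) as $q_1^{b_1}\cdots q_k^{b_k}$, then $p$ is eventually unit if and only if every $q_j$ is eventually unit. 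This holds because finitely many eventually unit elements become simultaneously units at a sufficiently large level, and conversely each $q_j$ divides the image of $p$ in every $R_{m'}$.

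Next I would set
\[
 N_m := \sum_{\substack{i \\ p_{m,i}\ \text{not eventually unit}}} a_{m,i}.
\]
By \cref{cor:prime-fac-bdd} applied with $L = K_m$ and $s = s^{(m)}$, the total count $\sum_i a_{m,i}$ is bounded, uniformly in $m$, by a constant $C = C(f)$; in particular $N_m \leq C$. I then claim $N_m \leq N_{m+1}$: by the intrinsic characterization above, each not-eventually-unit prime factor $p_{m,i}$ contributes at least one not-eventually-unit prime factor (with multiplicity) to the $R_{m+1}$-factorization of its image, while each eventually unit prime factor contributes only eventually unit prime factors. Multiplying by $a_{m,i}$ and summing yields the desired inequality. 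Hence $(N_m)$ is a bounded non-decreasing sequence of non-negative integers, and so stabilizes from some index $m_0$ on.

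At $m_0$, group the prime factorization of $f_{m_0}$ as $f_{m_0} = u \cdot g_1 \cdots g_l$, where $u$ collects $c_{m_0}$ together with all eventually unit prime factors taken with multiplicity, and $g_1,\dots,g_l$ are the remaining, not-eventually-unit, prime factors listed with multiplicity; in particular $l = N_{m_0}$, and $u$ is itself eventually unit. For any $m \geq m_0$, each $g_j$ is still not eventually unit, so the image of $g_j$ in $R_m$ contributes, with multiplicity, at least one not-eventually-unit prime factor to $f_m$; but the total such count is $N_m = l$, so each $g_j$ contributes exactly one. Collecting the remaining eventually unit prime factors together with the relevant unit of $R_m$ into a single eventually unit element $v \in R_m$ then gives the required decomposition of the image of $g_j$ as $vh$ with $h \in R_m$ prime. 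The main difficulty is essentially bookkeeping: one must keep eventually units and not-eventually-unit primes separate and track multiplicities consistently, but the essential analytic input, namely the uniform bound from \cref{cor:prime-fac-bdd}, is already in place.
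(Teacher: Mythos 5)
Your proof is correct, and it takes a genuinely different route from the paper's. The paper organizes the argument around an inverse system of finite label sets $S_m$ indexing the prime factors of $f$ at each level $R_m$: it uses \cref{cor:prime-fac-bdd} to bound $\#S_m$, extracts the finitely many ``threads'' in $\varprojlim S_m$, chooses $m_0$ where these threads separate, and then shows the leftover factors are eventually units because their fibers in later $S_m$ become empty (an inverse-limit-of-nonempty-finite-sets argument). You instead work directly with the dichotomy ``eventually unit vs.\ not'': after checking that under refinement of factorizations along the tower an eventually unit prime produces only eventually unit primes while a not-eventually-unit prime produces at least one not-eventually-unit prime, you introduce the invariant $N_m$ counting not-eventually-unit prime factors with multiplicity, observe it is non-decreasing and bounded by the constant of \cref{cor:prime-fac-bdd}, and take $m_0$ at stabilization; the exact count $N_m=l$ for $m\geq m_0$ then forces each $g_j$ to contribute exactly one not-eventually-unit prime, with multiplicity one, which is precisely the decomposition $vh$ required by \cref{lem:prim-fact-ev-stab}. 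Both arguments rest on the same analytic input (\cref{cor:prime-fac-bdd}), but your counting argument replaces the paper's inverse-limit bookkeeping with a monotone bounded integer invariant, which is arguably more elementary; the paper's thread construction gives a bit more structure (an explicit matching of prime factors across levels), though nothing beyond the stated lemma is needed. One small presentational point: you state the refinement equivalence only for consecutive levels $R_m\subset R_{m+1}$ but invoke it between $R_{m_0}$ and an arbitrary $R_m$ with $m\geq m_0$ in the final step; the proof is identical for any pair of levels, so you should simply state it in that generality.
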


\begin{proof}
    Define an inverse system 
    \begin{align}
        \cdots S_m \xlongrightarrow{\f_{m-1}} S_{m-1} \longrightarrow \cdots \longrightarrow S_1 \xlongrightarrow{\f_0} S_0
    \end{align}
    as follows.
    Label the prime factors of $f$ as an element of $R_0$.
    If the same prime factor appears multiple times,
    we attach distinct labels to them.
    Let $S_0$ be the set of the labels. Note that if $f$ is unit, then $S_0 = \emptyset$.
    Next, for each prime factor $P$ of $f$ in $R_0$,
    we similarly label the prime factors of $P$ as an element of $R_1$.
    Then, set $S_1$ as their disjoint union varying $P$.
    Define $\f_0 \colon S_1 \longrightarrow S_0$ so that
    the labels corresponding to factors of $P$ are mapped to the label corresponding to $P$.
    Repeat this, and we get the inverse system.
    For each $i \in S_m$, let us write the corresponding prime factor 
    of $f$ in $R_m$ as $P_{m,i}$.
    In particular we have $f_m = u\prod_{i \in S_m}P_{m,i}$ for some $u \in R_m^{\times}$.
    
    First note that $S_m = \emptyset$ for some $m$ if and only if $f$ is eventually unit.
    In this case, the statement is clear.
    Suppose $S_m \neq \emptyset$ for all $m \geq 0$.
    Then 
    \begin{align}
        S := \varprojlim_{m\geq 0} S_m \neq \emptyset.
    \end{align}
    By \cref{cor:prime-fac-bdd}, there is $C > 0$ such that $\# S_m \leq C$ for all $m$.
    This implies $\# S \leq C$. Let $\# S = l$.
    Let us write 
    \begin{align}\label{eq:members-of-S}
        S = \left\{ (i_1(m))_m, \dots, (i_l(m))_m \right\}.
    \end{align}
    Then there is $m_0$ such that 
    \begin{align}
        i_1(m_0),\dots, i_l(m_0)
    \end{align}
    are distinct.
    Then, we can write 
    \begin{align}
        f_{m_0} = u \prod_{i \in S_{m_0} \smallsetminus \{i_1(m_0),\dots, i_l(m_0) \}} P_{m_0,i}
        \prod_{j=1}^l P_{m_0, i_j(m_0)}
    \end{align}
    for some $u \in R_{m_0}^{\times}$.
    
    By \cref{eq:members-of-S}, for each 
    $i \in S_{m_0} \smallsetminus \{i_1(m_0),\dots, i_l(m_0) \}$
    there is $m \geq m_0$ such that
    \begin{align}
        (S_m \longrightarrow S_{m_0})^{-1}(i) = \emptyset.
    \end{align}
    Indeed, if there is no such $m$, an element of
    \begin{align}
        \varprojlim_{m \geq m_0} (S_m \longrightarrow S_{m_0})^{-1}(i)
    \end{align}
    produces an element of $S$ different from any of $(i_j(m))_m$.
    In particular, we proved that $P_{m_0, i}$ is eventually a unit.
    
    Finally, for every $m \geq m_0$, we can write
    \begin{align}
        P_{m_0, i_j(m_0)} = v P_{m, i_j(m)} 
        \prod_{\substack{k \in S_m \\ k \neq i_j(m)\\ (S_m \longrightarrow S_{m_0})(k)=i_j(m_0)}}P_{m,k}
    \end{align}
    in $R_m$ for some $v \in R_m^{\times}$.
    As before, the last factor is eventually a unit.
    Thus, we are done.
\end{proof}

{\bf Question.}
Is it possible to get rid of ``eventually"?
That is, is there such $m_0$ with
$u$ being unit in $R_{m_0}$ and $g_j$ being prime in $R_m$ for all $m \geq m_0$?

\section{\texorpdfstring{$p$}{p}-adic uniformization}\label{sec:padic uniformization}

In this section, we recall Rivera-Letelier's $p$-adic uniformization theorem and some basic facts on formal power series.

The following is well-known, and the proof is elementary.

\begin{lemma}
    Let $R$ be a ring.
    Let $f = \sum_{n \geq 1}a_n z^n \in R\lb z \rb$ be a formal power series.
    If $a_1 \in R^{\times}$, then there is unique $g \in zR\lb z \rb$ such that
    $f(g(z)) = g(f(z)) = z$.
\end{lemma}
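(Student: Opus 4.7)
The plan is to construct $g$ coefficient by coefficient and then argue that a one-sided inverse is automatically a two-sided inverse.

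First I would set $g = \sum_{n \geq 1} b_n z^n$ as an undetermined power series and try to solve $f(g(z)) = z$. Expanding $f(g(z)) = \sum_{k \geq 1} a_k g(z)^k$ and collecting terms of degree $n$, the coefficient of $z^n$ has the form
\begin{align}
    a_1 b_n + P_n(a_1,\dots,a_n, b_1,\dots, b_{n-1})
\end{align}
for some polynomial $P_n$ with integer coefficients (and $P_1 = 0$). Since $a_1 \in R^\times$, the equations
\begin{align}
    a_1 b_1 = 1, \qquad a_1 b_n + P_n(\cdots) = 0 \quad (n \geq 2)
\end{align}
can be solved uniquely and recursively for $b_1, b_2, \dots$, yielding a unique $g \in zR\lb z \rb$ with $f(g(z)) = z$. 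This simultaneously proves existence and uniqueness of a right inverse of $f$.

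Next I would observe that by construction the leading coefficient of $g$ is $b_1 = a_1^{-1}$, which is itself a unit. Applying the same construction to $g$ produces a unique $h \in zR\lb z \rb$ with $g(h(z)) = z$. Then, using associativity of composition of formal power series without constant term,
\begin{align}
    f = f \circ z = f \circ (g \circ h) = (f \circ g) \circ h = z \circ h = h,
\end{align}
so $g(f(z)) = g(h(z)) = z$ as well. Thus the $g$ already constructed satisfies both $f(g) = z$ and $g(f) = z$.

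For uniqueness of the two-sided inverse, if $g'$ is another element of $zR\lb z \rb$ with $f(g'(z)) = z$, then by the coefficient-by-coefficient argument above $g' = g$. The main (and really only) point to be slightly careful about is the associativity of composition in $zR\lb z \rb$, but since all series involved vanish at $0$ every composition is well-defined in $R\lb z \rb$ and the standard verification by induction on truncations applies. No other obstacle arises.
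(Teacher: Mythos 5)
Your proof is correct, and since the paper omits the argument entirely (stating only that the lemma is well-known and elementary), your coefficient-by-coefficient recursion combined with the associativity trick to upgrade the right inverse to a two-sided inverse is exactly the standard argument the paper has in mind. No gaps: the key points (solvability of $a_1 b_n + P_n = 0$ using $a_1 \in R^{\times}$, and associativity of composition for series in $zR\lb z\rb$) are both handled correctly.
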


Convergent power series that admit compositional inverse induces isomorphism between small discs around $0$.
The following is an algebraic expression of this fact.

\begin{lemma}\label{lem:isom-of-small-discs}
    Let $(K, |\ |)$ be a complete non-archimedean field and $r_0, s_0 >0$.
    Let $u \in K\{r_0^{-1}z\}$ and $v \in K\{s_0^{-1}z\}$ with $u(0)=v(0)=0$. 
    Suppose $u(v(z)) = v(u(z)) = z$ in $K\lb z \rb$.
    Then there is $r_1 \in (0,r_0 ] $ with the following properties.
    For all $r \in (0,r_1]$, 
    the morphism
    \begin{align}
        K\{ (|u'(0)|r)^{-1} z \} \longrightarrow K\{ r^{-1}z \}\ ; z \mapsto u(z)
    \end{align}
    is an isometric isomorphism.
    Moreover, $\|v\|_{|u'(0)|r} = r$ and 
    \begin{align}
          K\{ r^{-1}z \} \longrightarrow K\{ (|u'(0)|r)^{-1} z \} \ ; z \mapsto v(z)
    \end{align}
    is the inverse of the above morphism.
    Here $\|\ \|_r$ denotes the Gauss norm on $K\{r^{-1}z\}$.
\end{lemma}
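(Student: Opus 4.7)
Write $u(z)=\sum_{n\geq 1}a_n z^n$ and $v(z)=\sum_{n\geq 1}b_n z^n$. From the formal identity $u(v(z))=z$, comparing linear coefficients gives $a_1 b_1=1$, so in particular $u'(0)=a_1\neq 0$ and $v'(0)=b_1=1/a_1$. The goal is to choose $r_1$ so that on the discs of radii $r\leq r_1$ and $|a_1|r$, both $u$ and $v$ have Gauss norms dominated by their linear terms; after that, the isometry statement is essentially formal.

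\textbf{Step 1 (choice of $r_1$).} Since $|a_n|r_0^n\to 0$, there exists $N\geq 2$ with $|a_n|r_0^{n-1}\leq |a_1|$ for all $n\geq N$, and for each $2\leq n<N$ with $a_n\neq 0$ the inequality $|a_n|r^{n-1}\leq |a_1|$ holds for all sufficiently small $r$. Similarly, since $|b_n|s_0^n\to 0$, we can choose $s_1\in(0,s_0]$ so that $|b_n|s^{n-1}\leq |b_1|$ for all $n\geq 2$ and all $s\in(0,s_1]$. Now pick $r_1\in(0,r_0]$ small enough that $|a_1|r_1\leq s_1$ and that the condition $|a_n|r^{n-1}\leq |a_1|$ holds for all $n\geq 2$ and all $r\in(0,r_1]$. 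For such $r$, setting $s:=|a_1|r\leq s_1$, we deduce
\begin{align}
\|u\|_r \;=\; |a_1|r, \qquad \|v\|_{|a_1|r} \;=\; |b_1|\cdot|a_1|r \;=\; r,
\end{align}
since in each series the linear term strictly dominates (or at worst ties) every higher term.

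\textbf{Step 2 (substitution as a bounded map).} Because $u(0)=0$ and $\|u\|_r\leq |a_1|r$, the substitution
\begin{align}
\Phi_u\colon K\{(|a_1|r)^{-1}z\}\longrightarrow K\{r^{-1}z\},\qquad f\longmapsto f\circ u
\end{align}
is a well-defined continuous $K$-algebra homomorphism satisfying $\|\Phi_u(f)\|_r\leq \|f\|_{|a_1|r}$; this is the standard estimate $\|\sum c_m u^m\|_r\leq \sup_m|c_m|\|u\|_r^m$. Analogously, since $|a_1|r\leq s_0$ and $\|v\|_{|a_1|r}\leq r$, the substitution $\Phi_v\colon K\{r^{-1}z\}\to K\{(|a_1|r)^{-1}z\}$, $g\mapsto g\circ v$, is well-defined with $\|\Phi_v(g)\|_{|a_1|r}\leq \|g\|_r$.

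\textbf{Step 3 (mutual inverses and isometry).} By the hypothesis $u\circ v=v\circ u=z$ in $K\lb z\rb$, the compatibility of substitution in the polydisc algebras with formal substitution gives $\Phi_u\circ\Phi_v=\id$ and $\Phi_v\circ\Phi_u=\id$. Combining the two norm inequalities,
\begin{align}
\|f\|_{|a_1|r} \;=\; \|\Phi_v(\Phi_u(f))\|_{|a_1|r} \;\leq\; \|\Phi_u(f)\|_r \;\leq\; \|f\|_{|a_1|r},
\end{align}
forcing equality throughout and proving $\Phi_u$ is an isometric isomorphism with inverse $\Phi_v$, as claimed. The only nontrivial point is Step~1; the rest is routine bookkeeping.
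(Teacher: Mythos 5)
Your proof is correct and follows essentially the same route as the paper: shrink $r_1$ so that the linear terms dominate, giving $\|u\|_r=|u'(0)|r$ and $\|v\|_{|u'(0)|r}=r$, conclude both substitution maps are well-defined and contractive, and use the formal identities $u\circ v=v\circ u=z$ to see they are mutually inverse, hence isometric. Your Steps 1--3 just spell out the details the paper leaves implicit.
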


\begin{proof}
    Let us write 
    \begin{align}
        &u(z) = a z + \widetilde{u}(z)\\
        &v(z) = b z + \widetilde{v}(z)
    \end{align}
    for some $a,b \in K \smallsetminus \{0\}$ and  
    $\widetilde{u}(z),\widetilde{v}(z) \in z^2 K \lb z \rb$.
    Then $u'(0) = a$.
    We have $ab = 1$ because $u(v(z)) = z$.
    
    Now, there is $r_1 \in (0, r_0]$ such that
    $\|u\|_r = |a|r$ for all $r \in (0,r_1]$.
    If we further make $r_1$ small, then we also have
    $\|v\|_{\|u\|_r} = \|v\|_{|a|r} = |b||a|r = r$ for all $r \in (0,r_1]$.
    These imply the morphisms in the statements are well-defined and contractive.
    Since $u(v(z)) = v(u(z)) = z$, they are inverse maps of each other.
\end{proof}

Now, we state the $p$-adic uniformization in the form we use later.

\begin{theorem}\label{thm:p-adic-unif}
    Let $r>0$ and $K$ be a finite extension of $\Q_p$
    equipped with the multiplicative absolute value $|\ |$ extending $|\ |_p$.
    Let $f = \sum_{n \geq 1} a_n z^n \in K\{ r^{-1} z\}$
    \begin{enumerate}
        \item Suppose $0 < |a_1| < 1$.
        Then there is $r' \in (0, r]$, $s > 0$, and isometric isomorphism 
        $\f \colon K\{s^{-1}z \} \longrightarrow K\{ {r'}^{-1}z\}$ such that
        \begin{itemize}
            \item $\f(z)(0) = 0$
            \item $\|f\|_{K\{{r'}^{-1}z\}} \leq r'$
            \item the following diagram commutes.
            \begin{center}
            \begin{tikzcd}
            z \arrow[r,mapsto] & f(z)  \\[-2em]
            K\{ {r'}^{-1} z\}  \arrow[r] & K\{ {r'}^{-1} z\} \\
            K\{ s^{-1} z\}  \arrow[r] \arrow[u,"\f"] & K\{ s^{-1} z\} \arrow[u,"\f",swap]\\[-2em]
            z \ar[r,mapsto] & a_1 z&
            \end{tikzcd}
            \end{center}
        \end{itemize}

        \item Suppose $a_1=\cdots = a_{d-1}=0$ and $a_d \neq 0$ for some $d \geq 2$.
        Let $L$ be a finite extension of $K$ containing a $(d-1)$-th root of $a_d$.
        Then there is $r' \in (0, r]$, $s > 0$, and isometric isomorphism 
        $\f \colon L\{s^{-1}z \} \longrightarrow L\{ {r'}^{-1}z\}$ such that
        \begin{itemize}
            \item $\f(z)(0) = 0$
            \item $\|f\|_{L\{{r'}^{-1}z\}} \leq r'$
            \item the following diagram commutes.
            \begin{center}
            \begin{tikzcd}
            z \arrow[r,mapsto] & f(z)  \\[-2em]
            L\{ {r'}^{-1} z\}  \arrow[r] & L\{ {r'}^{-1} z\} \\
            L\{ s^{-1} z\}  \arrow[r] \arrow[u,"\f"] & L\{ s^{-1} z\} \arrow[u,"\f",swap]\\[-2em]
            z \ar[r,mapsto] &  z^d&
            \end{tikzcd}
            \end{center}
        \end{itemize}
    \end{enumerate}
\end{theorem}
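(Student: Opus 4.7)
My plan for both parts is the classical construction of a conjugating power series. In each case I will build $u \in L\lb z\rb$ with $u(0)=0$ and $u'(0)\neq 0$ satisfying the appropriate functional equation, show that $u \in L\{(r')^{-1}z\}$ for some small $r'$, and then invoke \cref{lem:isom-of-small-discs} to obtain the isometric isomorphism $\varphi$ defined by $\varphi(z) = u$. A short diagram chase using $\varphi(h) = h(u)$ shows that the commutativity stated in the theorem is equivalent to the functional equation $u(f(z)) = a_1 u(z)$ in part (1) and $u(f(z)) = u(z)^d$ in part (2).

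For (1) (Koenigs), I will construct $u(z) = z + \sum_{n \geq 2}c_n z^n$ formally by matching coefficients in $u(f(z)) = a_1 u(z)$: at degree $n$ this yields $(a_1^n - a_1)c_n = P_n(a_2,\dots,a_n; c_2,\dots,c_{n-1})$. Since $|a_1|<1$ forces $|a_1^n - a_1| = |a_1|$ for $n\geq 2$, the $c_n$ are uniquely determined. To exhibit convergence I shrink $r'\in(0,r]$ so that $\|f - a_1 z\|_{r'} \leq |a_1|^2 r'$ (possible because $f(z) - a_1 z = O(z^2)$); then $\|f\|_{r'} = |a_1|r' \leq r'$ and $f(\bD(r'))\subset \bD(|a_1|r')$. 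Setting $u_n(z) := a_1^{-n} f^n(z) \in K\{(r')^{-1}z\}$, the telescoping estimate
\begin{align}
    \|u_{n+1} - u_n\|_{r'} = |a_1|^{-(n+1)} \bigl\|f(f^n(z)) - a_1 f^n(z)\bigr\|_{r'} \leq C|a_1|^{n-1}(r')^{2}
\end{align}
forces $(u_n)$ to converge in $K\{(r')^{-1}z\}$ to a $u$ satisfying $u\circ f = a_1 u$ and $u'(0)=1$. Applying \cref{lem:isom-of-small-discs}, possibly after shrinking $r'$ once more, then yields $\varphi$ with $s = r'$.

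For (2) (B\"ottcher), I first reduce to $a_d = 1$ via the isometric change of coordinates $z \mapsto b^{-1}z$ (where $b^{d-1} = a_d$), which converts $f$ into $\tilde f(z) := bf(b^{-1}z) = z^d + O(z^{d+1})$. I then formally construct $u(z) = z + \sum_{n\geq 2}c_n z^n$ satisfying $u(\tilde f(z)) = u(z)^d$: since $\tilde f(z)^j$ begins in degree $jd$, matching the coefficient of $z^{d+k}$ gives a recursion of the form $d\cdot c_{k+1} = Q_k(\ldots; c_2,\dots,c_k)$, uniquely solvable because $\mathrm{char}(K)=0$. For convergence I shrink $r'$ so that $\|\tilde f/z^d - 1\|_{r'} < 1$, giving $|\tilde f(z)| = |z|^d$ on $\bD(r')\smallsetminus\{0\}$; then dynamically
\begin{align}
    u(z) \;=\; z\cdot \lim_{n\to\infty} \bigl(\tilde f^n(z)/z^{d^n}\bigr)^{1/d^n},
\end{align}
where the bracket has the form $1 + O(z)$ and one selects the branch of the $d^n$-th root equal to $1$ at $z=0$ via the binomial series. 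Controlling the Gauss norms shows the limit exists in $L\{(r')^{-1}z\}$. Again $u'(0)=1$, and \cref{lem:isom-of-small-discs} supplies $\varphi$.

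The main obstacle is the convergence analysis in part (2) when the residue characteristic of $K$ divides $d$: there $|1/d^n|$ blows up and the binomial series $(1+w)^{1/d^n}$ converges only for $w$ in a very small disc. Handling this will require an initial shrinking of $r'$ whose size depends on $d$ and the residue characteristic, so that the sequence $\tilde f^n(z)/z^{d^n}$ remains inside the common domain of convergence of all $(1+w)^{1/d^n}$; equivalently, and perhaps more cleanly, one can establish direct inductive bounds on $|c_n|$ from the recursion. Either way such an $r'$ exists, and no further extension of $L$ beyond the $(d-1)$-th root of $a_d$ is required.
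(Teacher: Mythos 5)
The paper offers no argument for \cref{thm:p-adic-unif} beyond citing Rivera-Letelier [RL03, Proposition 3.3] and Herman--Yoccoz [HY83, Theorem 1], so your direct Koenigs/B\"ottcher construction is a genuinely different, self-contained route, and it is the natural one. Part (1) is sound: since $|a_1^n-a_1|=|a_1|$ for $n\ge 2$ there are no small divisors, the telescoping bound on $u_n=a_1^{-n}f^{\circ n}$ gives convergence exactly as you say, and the only step you leave implicit is that the formal compositional inverse of $u$ also converges on some disc, which is required before \cref{lem:isom-of-small-discs} applies (standard: on a small disc $u=z(1+h)$ with $\|h\|<1$, so $u$ is an isometry onto a disc with analytic inverse). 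The same remark applies in part (2).

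In part (2) the construction is right, but the mechanism you offer for the crucial case $p\mid d$ fails as stated. The binomial series $(1+w)^{1/d^{n}}$ converges only for $|w|<|d|^{n}p^{-1/(p-1)}$, so the ``common domain of convergence of all $(1+w)^{1/d^n}$'' is the single point $w=0$: no initial shrinking of $r'$ can place the (generically nonzero) quantities $\tilde f^{\circ n}(z)/z^{d^n}-1$ in such a common disc. Two repairs work. Writing $\tilde f(z)=z^d(1+g(z))$ with $g(0)=0$, replace the limit by the telescoping product $u(z)=z\prod_{k\ge 0}\bigl(1+g(\tilde f^{\circ k}(z))\bigr)^{1/d^{k+1}}$: on a disc of radius $\rho$ the $k$-th argument has Gauss norm $O(\rho^{d^k-1})$, and this doubly exponential decay beats the exponential growth of $|1/d^{k+1}|$ and of the factorials in the binomial coefficients, so after shrinking $\rho$ (depending on $d$ and $p$) the product converges in $L\{\rho^{-1}z\}$, and one checks directly $u\circ\tilde f=u^d$, $u(0)=0$, $u'(0)=1$. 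Alternatively, keep your limit but prove the rate-matching estimate $|\tilde f^{\circ n}(z)/z^{d^n}-1|\le |d|^{\,n+1}p^{-1/(p-1)}$ by induction from $1+w_{n+1}=(1+w_n)^d\bigl(1+g(\tilde f^{\circ n}(z))\bigr)$; then $w_n$ lies inside the radius of the exponent-$1/d^n$ series, but only by a single factor of $|d|$, so the ``direct inductive bounds'' you mention are the actual content of the proof, not optional bookkeeping. Note finally that in this paper the theorem is only ever invoked with $p\nmid d$ (the ramification indices in \cref{prop:PIQ-P1P1-Zp} are prime to $p$); in that case the exponents $1/d^n$ are $p$-adic units, all the binomial series converge on $|w|<1$, and your limit formula goes through as written.
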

\begin{proof}
    See the proof of \cite[Proposition 3.3]{RL03}.
    See also \cite[Theorem 1]{HY83}.
\end{proof}

\section{Preimages Question on \texorpdfstring{$\P^1 \times \P^1$}{P1P1}}\label{sec:piq P1P1}

In this section, the $p$-adic absolute value $|\ |_p$ on $\Q_p$
is simply denoted by $|\ |$.
All algebraic extensions of $\Q_p$ as well as $\C_p$ are equipped with
the multiplicative absolute values extending $|\ |_p$
and they are denoted by $|\ |$.

The following is the main theorem in this section.

\begin{theorem}\label{thm:piq_P1P1}
    Let $K$ be a finitely generated field over $\Q$. 
    Let $\f \colon \P^1_K \times_K \P^1_K \longrightarrow \P^1_K \times_K \P^1_K$
    be a surjective morphism over $K$.
    Let $Y \subset \P^1_K \times_K \P^1_K$ be a $\f$-invariant closed subscheme.
    Then there is $s_0 \geq 0$ such that for all $s \geq s_0$, we have
    \begin{align}
        \left(\f^{-s-1}(Y) \smallsetminus \f^{-s}(Y) \right)(K) = \emptyset.
    \end{align}
\end{theorem}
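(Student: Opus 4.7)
The plan is to argue by contradiction using a $v$-adic local analysis at a fixed point of $\f$ lying on $Y$, combining Rivera-Letelier's uniformization \cref{thm:p-adic-unif} with the prime-factorization stabilization in polydisc algebras \cref{lem:prim-fact-ev-stab}.

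First I would perform standard reductions. Induction on $\dim Y$ together with \cref{prop:PIQ_fixedpoint} reduces to $\dim Y = 1$. By \cref{prop:twoPIQequiv} we may pass to a finite extension of $K$, and by iterating $\f$ we may assume $Y$ is geometrically irreducible and strictly $\f$-invariant. Since $\Pic(\P^1 \times \P^1) = \Z^2$ admits only the swap as a nontrivial automorphism, $\f^2$ is of the product form $f \times g$ for surjective morphisms $f, g \colon \P^1 \to \P^1$, so we may assume $\f = f \times g$. If $Y$ is a horizontal or vertical fiber, PIQ reduces to \cref{prop:PIQ_fixedpoint} applied to $f$ or $g$; thereafter we may assume both projections $Y \to \P^1$ are finite surjective.

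Second, suppose the conclusion fails, so $B_s(K) := (\f^{-s-1}(Y) \smallsetminus \f^{-s}(Y))(K) \ne \emptyset$ for infinitely many $s$. Let $A_s$ be the set of irreducible components $C$ of $\f^{-s-1}(Y)$ with $C \not\subset \f^{-s}(Y)$ and $(C \cap B_s)(K) \ne \emptyset$. Each $A_s$ is finite and nonempty, and $\f$ carries $A_{s+1}$ to $A_s$, so compactness of inverse limits of nonempty finite sets gives a coherent chain $(C_s)_s$ with $\f(C_{s+1}) = C_s$. Next, choose a non-archimedean place $v$ of (a finitely generated ring of definition of) $K$ at which $\f$, $Y$, and the relevant components have good reduction, and admitting a $K_v$-rational fixed point $P = (\alpha, \beta) \in Y(K_v)$ of $\f$ with multipliers $\lambda = f'(\alpha)$, $\mu = g'(\beta)$ satisfying $0 < |\lambda|_v, |\mu|_v < 1$; the degenerate cases in which a multiplier vanishes are handled by the power-map form of \cref{thm:p-adic-unif}. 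Existence is arranged by iterating $\f$ and enlarging $K$ via \cref{prop:twoPIQequiv}. Good reduction and finiteness of the backward $\f_{\F_v}$-orbits of $Y_{\F_v}$ force, for $s \gg 0$, the existence of a point $x_s \in (C_s \cap B_s)(K)$ whose reduction modulo $v$ equals $P$; by Hensel's lemma, $x_s$ lies in a $v$-adic polydisc $D$ around $P$.

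Third, by \cref{thm:p-adic-unif} we may, after a finite extension of $K_v$, transport $\f|_D$ to the linear model $(u, w) \mapsto (\lambda u, \mu w)$ on a standard polydisc in which $Y$ is the zero locus of a nonzero element $F$ of the polydisc algebra. Applying \cref{lem:prim-fact-ev-stab} together with \cref{cor:prime-fac-fld-ext}, after enlarging $K_v$ once more we factor $F = F_1 \cdots F_k$ into primes that remain prime under all further finite extensions. Invariance of $Y$ means $\f$ permutes the branches $\{F_i = 0\}$, and after one more iteration of $\f$ each branch is individually preserved, yielding identities $F_i(\lambda u, \mu w) = c_i(u, w) F_i(u, w)$ with $c_i$ a unit of the polydisc algebra. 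Iterating gives $F_i(\lambda^n u, \mu^n w) = C_{i,n}(u, w) F_i(u, w)$ with $C_{i,n}$ also a unit, so a point $(u_0, w_0) \in D$ satisfies $\f^n(u_0, w_0) \in Y$ if and only if $(u_0, w_0) \in Y$. Taking $n = s+1$ for our $x_s$ gives $x_s \in Y \subset \f^{-s}(Y)$, contradicting $x_s \in B_s$.

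The main obstacle will be the second step: securing the multiplier conditions $0 < |\lambda|_v, |\mu|_v < 1$ at a $K_v$-rational fixed point of $\f$ lying on $Y$, and then showing that a representative $x_s \in (C_s \cap B_s)(K)$ accumulates to $P$ modulo $v$. Both parts require delicate bookkeeping: the multiplier condition is arranged by iterating $\f$ (which powers multipliers) together with a density argument over places of $K$, while the accumulation uses good-reduction control over the finite backward orbits of $Y_{\F_v}$ under $\f_{\F_v}$ combined with Hensel lifting. There is also a subsidiary issue in passing from $K$ finitely generated to a suitable non-archimedean place (spread out to an integral model over $\Z$ and specialize), and in handling the Rivera-Letelier cases with vanishing multipliers, where the local normal form is a power map and the polydisc-algebra invariance equation takes a slightly different shape.
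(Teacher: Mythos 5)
Your overall strategy (localize at a non-archimedean place, uniformize via \cref{thm:p-adic-unif}, and use prime factorization in polydisc algebras) is in the same spirit as the paper's proof, but two steps as you state them have genuine gaps. First, the pivot of your argument --- a place $v$ and a $K_v$-rational fixed point $P=(\alpha,\beta)\in Y(K_v)$ of $\f$ with $0<|\lambda|_v,|\mu|_v<1$, to which the witnesses $x_s$ accumulate $v$-adically --- cannot be ``arranged'' in general. The multipliers at a fixed point are fixed algebraic numbers, hence are $v$-adic units at all but finitely many places (and may be units, even roots of unity, at \emph{every} place), so the condition $0<|\lambda|_v<1$ may be unattainable; iterating $\f$ only powers the multipliers and does not change their being units, and enlarging $K$ does not help either. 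Moreover, even when such a $P$ exists, nothing forces the bad points $x_s$ to reduce to $P$ modulo $v$: they can land in any residue disc whose forward orbit meets the reduction of $Y$. This is exactly why the paper does not look for a single well-behaved fixed point but instead, after making the reduction map idempotent, runs a case analysis over \emph{all} residue discs of a single carefully chosen prime $p$, including the indifferent case $|a_1|=|b_1|=1$ where the local map is an isometric automorphism and no attracting model exists.

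Second, your key mechanism in step three --- invariance of each branch gives $F_i(\lambda u,\mu w)=c_iF_i$ with $c_i$ a unit, whence ``$\f^n(x)\in Y\iff x\in Y$'' --- is false in the superattracting case you dismiss as a ``slightly different shape.'' Composing with a contraction is not an automorphism of the polydisc algebra, so $F_i\mid F_i\circ\f$ does not make the cofactor a unit; concretely, $\Psi=z-w^2$ satisfies $\Psi\mid\Psi(z^d,w^d)$ (so $Y=\{z=w^2\}$ is invariant under $(z,w)\mapsto(z^d,w^d)$), yet $\f^{-1}(Y)\smallsetminus Y$ contains points $z=\zeta w^2$ with $\zeta^d=1$, $\zeta\neq1$, so the stabilization level $s_0$ is genuinely positive and your contradiction does not materialize. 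Handling this case is the heart of the paper's proof (Case 3-2 of \cref{prop:PIQ-P1P1-Zp}): one must show the two local degrees agree, prove that the zero locus of the invariant prime is contained in curves $z^k=w^l$ (via \cref{lem:seq-conv-to-roots}, which needs $p\nmid d$ --- this is why the prime $p$ is chosen, using \cite[Proposition 2.5.3.1]{BGT16}, coprime to all ramification indices of $f$ and $g$, a condition absent from your outline), and then bound $s_0$ by the finiteness of $d$-power roots of unity in a $p$-adic field. Even in the linearizable attracting cases, turning the cofactor into a unit requires the order-counting apparatus (\cref{lem:property-of-ord}, \cref{cor:prime-fac-bdd}) and a radius shrink compensated by the contraction property, and the cases $w\mid\Psi$ must be treated separately; these are fixable bookkeeping points, but the superattracting case and the nonexistent attracting fixed point are structural gaps in the proposal.
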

\begin{proof}
     By replacing $\f$ with $\f^2$, we may write $\f = f \times g$ for some
     finite surjective morphisms $f,g \colon \P^1_K \longrightarrow \P^1_K$.
     Take a subring $R \subset K$ which is finitely generated over $\Z$ so that
     $K = \Frac R$ and there are 
     \begin{itemize}
         \item finite surjective morphisms $f_R,g_R \colon \P^1_R \longrightarrow \P^1_R$;
         \item closed subscheme $Y_R \subset \P^1_R \times_R \P^1_R$
     \end{itemize}
     with the following properties.
     The morphisms $f,g$ are base change of $f_R$ and $g_R$, $Y$ is the base change of $Y_R$,
     and $Y_R$ is $f_R \times g_R$-invariant.
     By \cite[Proposition 2.5.3.1]{BGT16} (cf.\ \cite[Proposition 3.6]{BMS23}),
     there is a prime number $p$ and an embedding $\s \colon K \longrightarrow \Q_p$ such that
     \begin{itemize}
         \item $\s(R) \subset \Z_p$
         \item $p$ does not divide any ramification indices of $f$ and $g$ considered as 
         self-morphisms on $\P^1_{\overline{K}}$.
     \end{itemize}
    Then let us write the base change of $f_R, g_R$, and $Y_R$ to $\Z_p$ (resp. $\Q_p$)
    via $\s$ as $f_{\Z_p}, g_{\Z_p}$, and $Y_{\Z_p}$ 
    (resp. $f_{\Q_p}, g_{\Q_p}$, and $Y_{\Q_p}$), cf.\ the following diagram.
    \if0
    \begin{equation}
        \begin{tikzcd}[row sep=scriptsize, column sep=scriptsize]
            & \P^1_{\Z_p} \times_{\Z_p} \P^1_{\Z_p} \arrow[dl]  \arrow[dd] & & \P^1_{\Q_p} \times_{\Q_p} \P^1_{\Q_p} \arrow[dl] \arrow[dd] \arrow[ll] &  \\ 
            \P^1_R \times_R \P^1_R  \arrow[dd] & &\P^1_K \times_K \P^1_K \arrow[ll, crossing over]  & &\\
            & \Spec \Z_p \arrow[dl]  & & \Spec \Q_p \arrow[dl] \arrow[ll] & \\
            \Spec R  & & \Spec K \arrow[ll] \arrow[from=uu, crossing over] & &\\
        \end{tikzcd}
    \end{equation} 
    \fi
    \begin{center}
    \begin{tikzpicture}[commutative diagrams/every diagram,scale=0.8,transform shape]
        \node (P0) at (3,2.5) {$\P^1_{\Z_p} \times_{\Z_p} \P^1_{\Z_p}$};
        \node (P01) at (3,2.6) {\phantom{P}};
        \node (P1) at (8,2.5) {$\P^1_{\Q_p} \times_{\Q_p} \P^1_{\Q_p}$};
        \node (P11) at (8,2.6) {\phantom{P}};
        \node (P2) at (0,1) {$\P^1_R \times_R \P^1_R$};
        \node (P21) at (-0.75,1) {\phantom{P}};
        \node (P3) at (5,1) {$\P^1_K \times_K \P^1_K$};
        \node (P31) at (5.75,0.9) {\phantom{P}};
        \node (P4) at (3,-0.5) {$\Spec \Z_p$};
        \node (P5) at (8,-0.5) {$\Spec \Q_p$};
        \node (P6) at (0,-2) {$\Spec R$};
        \node (P7) at (5,-2) {$\Spec K$};
        \node (M0) at (-2.1,1) {\text{\scriptsize $f_R \times g_R$}};
        \node (M1) at (3,3.5) {\text{\scriptsize $f_{\Z_p} \times g_{\Z_p}$}};
        \node (M2) at (8,3.5) {\text{\scriptsize $f_{\Q_p} \times g_{\Q_p}$}};
        \node (M3) at (6.8,0.2) {\text{\scriptsize $f_K \times g_K$}};

        \path[commutative diagrams/.cd, every arrow, every label]
        (P0) edge (P2)
        (P0) edge (P4)
        (P1) edge (P0)
        (P1) edge (P3)
        (P1) edge (P5)
        (P2) edge (P6)
        (P3) edge (P2)
        (P3) edge (P7)
        (P4) edge (P6)
        (P5) edge (P4)
        (P5) edge (P7)
        (P7) edge (P6)
        (P21) edge[out=-150,in=150,looseness=4] (P21)
        (P01) edge[out=120,in=60,looseness=4] (P01)
        (P11) edge[out=120,in=60,looseness=4] (P11)
        (P31) edge[out=-70,in=0,looseness=4] (P31);    
    \end{tikzpicture}
    \end{center}
    Then, we have the following commutative diagram.
    \begin{equation}
        \begin{tikzcd}
            (\P^1 \times \P^1)(K) \arrow[r,hookrightarrow] \arrow[d,"f \times g"] & (\P^1 \times \P^1)(\Q_p) \arrow[r,"\sim"] \arrow[d,"f_{\Q_p} \times g_{\Q_p}"] & (\P^1 \times \P^1)(\Z_p) \arrow[d,"f_{\Z_p} \times g_{\Z_p}"]\\
            (\P^1 \times \P^1)(K) \arrow[r,hookrightarrow] & (\P^1 \times \P^1)(\Q_p) \arrow[r,"\sim"] & (\P^1 \times \P^1)(\Z_p)
        \end{tikzcd}
    \end{equation}
    We also have
    \begin{align}
        &Y_{\Q_p}(\Q_p) \cap (\P^1 \times \P^1)(K)  = Y(K)\\
        &Y_{\Q_p}(\Q_p) = Y_{\Z_p}(\Z_p)\ \text{via the above identification.}
    \end{align}
    Therefore, PIQ for $f \times g$ and $Y$ follows from that of
    $f_{\Z_p} \times g_{\Z_p} \colon (\P^1 \times \P^1)(\Z_p) \longrightarrow (\P^1 \times \P^1)(\Z_p) $ and $Y_{\Z_p}(\Z_p)$.
    By the choice of $p$, $p$ does not divide the ramification indices of 
    $f_{\Q_p}$ and $g_{\Q_p}$ considered as self-morphisms on $\P^1_{\overline{\Q}_p}$.
    Thus, the theorem follows from the following \cref{prop:PIQ-P1P1-Zp}.
\end{proof}

\begin{proposition}\label{prop:PIQ-P1P1-Zp}
    Let $f, g \colon \P^1_{\Z_p} \longrightarrow \P^1_{\Z_p}$ be finite surjective morphism over $\Z_p$.
    Suppose $p$ is coprime with every ramification index of $f$ and $g$
    considered as self-morphisms on $\P^1_{\overline{\Q}_p}$.
    Let $Y \subset \P^1_{\Z_p} \times_{\Z_p} \P^1_{\Z_p}$ be an $f \times g$-invariant closed subscheme.
    Then there is $s_0 \geq 0$ such that 
    if $x \in (\P^1 \times \P^1)(\Z_p)$ satisfies $f^s(x) \in Y(\Z_p)$ for some $s \geq 0$,
    then we have $f^{s_0}(x) \in Y(\Z_p)$.
\end{proposition}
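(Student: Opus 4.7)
\emph{Plan.} The plan is to argue by local analytic methods on residue polydiscs in $(\P^1\times\P^1)(\Z_p)$, using the $p$-adic uniformization theorem (\cref{thm:p-adic-unif}) to put $\f := f\times g$ into a normal form on each invariant residue polydisc, and the polydisc-algebra results of \cref{sec:Preliminaries on polydisc algebras} to control how the local defining equation of $Y$ factors under pullback by iterates of $\f$.

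\emph{Reductions and reduction to a single invariant polydisc.} I would first replace $Y$ by its reduced structure, decompose into irreducible components, and pass to an iterate of $\f$ so that every irreducible component of $Y$ is $\f$-invariant. Components of dimension zero reduce to \cref{prop:PIQ_fixedpoint}, whose proof transfers verbatim to the $\Z_p$-setting by working with the residue field $\F_p$ in place of $\O_{K,S}/\p$. So I may assume $Y$ is a $\f$-invariant irreducible curve. Next, partition $(\P^1\times\P^1)(\Z_p)$ into residue polydiscs indexed by $(\P^1\times\P^1)(\F_p)$. The reduction $\bar\f$ permutes these finitely many polydiscs; after replacing $\f$ by a further iterate, every $\bar\f$-periodic residue polydisc becomes $\f$-invariant, and every $\Z_p$-orbit eventually enters one of the finitely many $\f$-invariant residue polydiscs. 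It therefore suffices to prove a uniform bound on the entry time into $Y$ on each invariant polydisc $D$ separately and take the maximum.

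\emph{Local normal form and factorization of $Y$.} Fixing such a $D$, identify it with a standard bidisc over a finite extension of $\Q_p$ and, after a further iterate and (if needed) a further finite extension, place a fixed point of $\f|_D = (\f_1,\f_2)$ at the origin (Hensel-lifting the reduced fixed point, which exists by the good reduction of $\f$). The hypothesis that $p$ is coprime to every ramification index of $f$ and $g$ is precisely what allows \cref{thm:p-adic-unif} to apply to each $\f_i$: after shrinking the radii and conjugating, $\f_i$ becomes either linear, $z\mapsto\lambda_i z$ with $0<|\lambda_i|\leq 1$, or a power map, $z\mapsto z^{d_i}$ with $p\nmid d_i$. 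Let $F$ be a local defining equation of $Y\cap D$ in the polydisc algebra; then $F\circ\f^s$ defines $\f^{-s}(Y)\cap D$, and in the contracting and power cases this substitution effectively shrinks the radii and may require a growing chain of finite extensions. \cref{lem:prim-fact-ev-stab} then applies to the resulting inverse system of polydisc algebras, forcing the prime factorization of $F\circ\f^s$ to stabilize after boundedly many steps, while \cref{cor:prime-fac-bdd} uniformly bounds the number of prime factors that can appear. Each stable prime factor corresponds to a local branch of the iterated preimage meeting $D$; the $\f$-invariance of $Y$ implies these branches are permuted among themselves by $\f$, and explicit analysis in the linear or power model then produces a uniform entry time $s_0(D)$.

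\emph{Main obstacle.} The hardest case will be when a coordinate $\f_i$ is an isometry of the disc, i.e.\ $|\f_i'(0)|=1$: there \cref{thm:p-adic-unif} does not apply, the pullback on the polydisc algebra is an automorphism rather than a radius-shrinking endomorphism, and orbits can wander indefinitely within $D$. In that regime I would argue that $\f|_D$ induces a permutation of the (uniformly bounded, by \cref{cor:prime-fac-bdd}) set of prime divisors of $F$, and that this permutation combined with the invariance $\f(Y)\subset Y$ still forces a uniform bound on how long an orbit can avoid $Y$ before entering it. Managing the mixed situation in which one coordinate is contracting or super-attracting while the other is isometric, and patching the resulting local bounds into a single global $s_0$, is where the main technical difficulty will lie.
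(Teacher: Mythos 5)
Your Step-1 skeleton (reduction to invariant residue discs, local power series $F,G$, a local defining equation of $Y$, and the polydisc-algebra toolbox) matches the paper, but the core of the argument is missing and partly misattributed. You assert that the hypothesis that $p$ is coprime to the ramification indices ``is precisely what allows \cref{thm:p-adic-unif} to apply''; it is not --- Rivera-Letelier's uniformization requires no such hypothesis and is invoked in every attracting or super-attracting case regardless of $p$. The coprimality enters only in the hardest case, where both local coordinates are conjugated to power maps $(z^d,w^e)$: there one must show that $d=e$ (via the action of the roots of unity $\mu_d\times\mu_e$ on the prime factors), that the zero locus of the invariant prime factor $\Psi$ is contained in the coordinate axes together with curves $z^k=w^l$ (\cref{claim:zero-locus-Psi}), which rests on \cref{lem:seq-conv-to-roots} whose binomial estimate needs $|d|=1$, i.e.\ $p\nmid d$, and hence that $\Psi\mid z^k-w^l$; the uniform $s_0$ then comes from the bounded order of roots of unity in a fixed finite extension of $\Q_p$. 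Your plan compresses all of this into ``explicit analysis in the linear or power model then produces a uniform entry time,'' so it contains no mechanism actually producing a uniform bound in the super-attracting case --- which is exactly where the statement can fail without the ramification hypothesis (recall PIQ is false over $p$-adic fields in general).

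You also misidentify the hard case and rely on two false reductions. The isometric case $|\f_i'(0)|=1$, which you flag as the main obstacle, is in fact the easy one: the local map is then an isometric automorphism of the polydisc algebra, so the divisibility $\Phi\mid\Phi\circ(F,G)$ together with equality of the number of prime factors forces $\Phi\circ(F,G)=\gamma\Phi$ with $\gamma$ a unit, i.e.\ the zero set is exactly invariant and PIQ is immediate (Cases 1 and 2 of the paper's proof). Conversely, your normal-form step assumes a fixed point of $\f|_D$ can be Hensel-lifted in every invariant residue disc; this fails precisely in the isometric direction (e.g.\ $z\mapsto z+p$ on $p\Z_p$ has no fixed point, and the reduced fixed-point equation is degenerate there), though no fixed point is needed in that case --- the paper produces one only for a contracting coordinate, by a contraction argument rather than by Hensel at the reduced fixed point. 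Finally, the claim that $0$-dimensional components ``reduce to \cref{prop:PIQ_fixedpoint}, whose proof transfers verbatim to the $\Z_p$-setting'' is unjustified: that proof concludes $x_m=x_0$ from equality of reductions at infinitely many places of a number field, which has no analogue over $\Z_p$ with its single residue field; the paper instead disposes of the singleton case inside the local case analysis (bijectivity of the local map, or contraction to its unique fixed point). As written, the proposal reproduces the reductions of Step 1 but not the arithmetic heart of Step 2.
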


\begin{proof}
    Since we are interested in $\Z_p$-points, we may assume $Y$ is reduced.
    By replacing $f, g$ with some iterates, we may assume $Y$ is irreducible.
    If either $Y = \P^1_{\Z_p} \times_{\Z_p} \P^1_{\Z_p}$ or $Y(\Z_p) = \emptyset$,
    there is nothing to prove.
    Thus, we may assume $\dim Y = 1$ or $2$.
    When $\dim Y = 1$, by replacing $f, g$ with some iterates again,
    we may assume $Y$ is a single $\Z_p$-point $\Spec \Z_p \hookrightarrow \P^1_{\Z_p} \times_{\Z_p} \P^1_{\Z_p}$.
    When $\dim Y = 2$, $Y$ is locally defined by a principal ideal.

    In this proof, we use the following terminology.
    For a triple $(X, f, Y)$ consisting of a set $X$, self-map $f \colon X \longrightarrow X$,
    and subset $Y \subset X$ with $f(Y) \subset Y$, we say PIQ is true for $(X,f,Y)$ if the following holds:
    there is $s_0$ such that if $x \in X$ satisfies $f^s(x) \in Y$ for some $s \geq 0$,
    then $f^{s_0}(x) \in Y$.

    {\bf \fbox{Step 1.}}
    For a morphism $f$ over $\Z_p$ or a $\Z_p$-point $x$,
    their reduction mod $p$ are denoted by $\overline{f}$ and $\overline{x}$.

    Since $\P^1(\F_p) \times \P^1(\F_p)$ is a finite set,
    by replacing $f,g$ with iterates, we may assume $\overline{(f \times g)} = \overline{(f \times g)}^2$
    as self-maps on $\P^1(\F_p) \times \P^1(\F_p)$.
    Let 
    \begin{align}
        \Fix(\overline{f}, \P^1(\F_p)) &= \{ \xi_1,\dots, \xi_{r_1}\},\\ 
        \Fix(\overline{g}, \P^1(\F_p)) &= \{\eta_1,\dots, \eta_{r_2}\}
    \end{align}
    and set
    \begin{align}
        U_{\xi_i} &:= \{ \alpha \in \P^1(\Z_p) \mid \overline{\alpha} = \xi_i\},\\
        V_{\eta_j} &:= \{ \alpha \in \P^1(\Z_p) \mid \overline{\alpha} = \eta_j\}.
    \end{align}
    Then we have $f(U_{\xi_i}) \subset U_{\xi_i}$ and $g(V_{\eta_j}) \subset V_{\eta_j}$.
    Since $\overline{(f \times g)} = \overline{(f \times g)}^2$ on $\P^1(\F_p) \times \P^1(\F_p)$,
    for any point $x \in (\P^1 \times \P^1)(\Z_p)$ we have $(f\times g)(x) \in U_{\xi_i} \times V_{\eta_j}$
    for some $i$ and $j$.
    Therefore, it is enough to show that PIQ is true for 
    \begin{align}
        (f \times g)|_{U_{\xi_i} \times V_{\eta_j}} \colon U_{\xi_i} \times V_{\eta_j} 
        \longrightarrow U_{\xi_i} \times V_{\eta_j}
    \end{align}
    and $Y(\Z_p) \cap (U_{\xi_i} \times V_{\eta_j})$ for every $i$ and $j$.

    Let us fix $i$ and $j$ and simply write $\xi_i$ and $\eta_j$ by $\xi$ and $\eta$, respectively.
    Consider $\xi$ and $\eta$ as closed points of $\P^1_{\Z_p}$ and fix the following
    isomorphisms of $\Z_p$-algebras:
    \begin{align}
        \widehat{\O}_{\P^1_{\Z_p},\xi} \simeq \Z_p \lb z \rb ,\ 
        \widehat{\O}_{\P^1_{\Z_p},\eta} \simeq \Z_p \lb w \rb.
    \end{align}
    By these isomorphisms, we get bijections 
    \begin{align}
        U_{\xi} \simeq p\Z_p ,\ 
        V_{\eta} \simeq p\Z_p  
    \end{align}
    where $a \in p\Z_p$ corresponds to $\Z_p$-algebra homomorphism
    $\Z_p \lb z \rb \longrightarrow \Z_p;\ z \mapsto a$ for example.
    Under these bijections, we get
    \begin{align}
        \xymatrix
        {
        U_{\xi} \times V_{\eta} \ar[d]^{\rotatebox{90}{$\sim$}} \ar[r]^{f\times g}& U_{\xi} \times V_{\eta} \ar[d]^{\rotatebox{90}{$\sim$}} \\
        p\Z_p \times p\Z_p \ar[r]_{F \times G} & p\Z_p \times p\Z_p
        }
    \end{align}
    where $F = (\Z_p \lb z \rb \xlongrightarrow[]{f^*} \Z_p \lb z \rb )(z)$
    and $G = (\Z_p \lb w \rb \xlongrightarrow[]{g^*} \Z_p \lb w \rb )(w)$.
    The image $Z$ of $Y(\Z_p) \cap (U_{\xi} \times V_{\eta})$ to $(p\Z_p)^2$
    is either empty set, singleton, or
    \begin{align}
        \{ (a,b) \in (p\Z_p)^2  \mid \Phi(a,b)=0 \}
    \end{align}
    where $\Phi \in \Z_p \lb z,w \rb$
    is a local defining equation of $Y$ at $(\xi, \eta) \in \P^1_{\Z_p} \times_{\Z_p} \P^1_{\Z_p}$.
    When $Z = \emptyset$, there is nothing to prove.
    When $Z$ is a singleton, it is a fixed point of $F \times G$.
    When $Z$ is the zero set of $\Phi$, we have 
    \begin{align}
        \Phi(z,w) \mid \Phi(F(z), G(w)) \ \text{in $\Z_p \lb z,w \rb$}.
    \end{align}
    Note also that we may assume the constant term of $\Phi$ is zero modulo $p$ since 
    otherwise the zero set of $\Phi$ in $(p\Z_p)^2$ is empty.

    \vspace{3mm}
    
    In summary, we reduced the problem to show PIQ for the map
    \begin{align}\label{map-FcrossG}
        F \times G \colon (p\Z_p)^2 \longrightarrow (p\Z_p)^2
    \end{align}
    and the set $Z \subset (p\Z_p)^2$ where
    \begin{itemize}
        \item $F \in \Z_p \lb z \rb$, $G \in \Z_p \lb w \rb$;
        \item $F,G \neq 0 \mod p$; 
        \item constant terms of $F, G$ are zero modulo $p$;
        \item $Z$ is a fixed point of $F \times G$, or the zero set of $\Phi \in \Z_p \lb z, w \rb \smallsetminus \{0\}$.
        In the latter case, the constant term of $\Phi$ is zero modulo $p$ and
        $\Phi(z,w) \mid \Phi(F(z),G(w))$ in $\Z_p \lb z,w \rb$. 
    \end{itemize}

    {\bf \fbox{Step 2.}}
    Let $F,G$, and $Z$ as above.
    Then $F, G$ define morphisms of strictly $\Q_p$-affinoid algebras
    \begin{align}
        &\Q_p \{ pz \} \longrightarrow \Q_p \{pz\} ; z \mapsto F(z)\\
        &\Q_p \{ pw \} \longrightarrow \Q_p \{pw\} ; w \mapsto G(w)\\
    \end{align}
    because $\|F\|_{\Q_p \{ pz \}} \leq p^{-1}$ and $\|G\|_{\Q_p \{ pw \}} \leq p^{-1}$.
    Their completed tensor product $\f \colon \Q_p\{pz, pw\} \longrightarrow \Q_p\{pz, pw\}$ induces 
    continuous map $\f^* \colon \M(\Q_p\{pz, pw\}) \longrightarrow \M(\Q_p\{pz, pw\})$
    on the Berkovich spectrum. The map \cref{map-FcrossG} is embedded in this map $\f^*$ as maps between type I points:
    \begin{align}
        \xymatrix{
        (p\Z_p)^2 \ar[r]^{F \times G} \ar@{}[d]|{\bigcap} & (p\Z_p)^2 \ar@{}[d]|{\bigcap}\\
        \M(\Q_p\{pz, pw\}) \ar[r]_{\f^*} & \M(\Q_p\{pz, pw\})
        }
    \end{align}
    We also remark that when $Z$ is the zero locus of $\Phi$,
    we have $\Phi \mid \f(\Phi)$ in $\Q_p\{pz, pw\}$.

    Let us write
    \begin{align}
        &F(z) = a_0 + a_1 z + a_2 z^2 + \cdots\\
        &G(w) = b_0 + b_1 w + b_2 w^2 + \cdots
    \end{align}
    where $a_n, b_m \in \Z_p$.
    We assumed $|a_0|, |b_0| < 1 $.
    (Recall that $|\ |$ is the $p$-adic absolute value normalized as $|p|=p^{-1}$. )
    We treat the following cases separately.
    \begin{table}[h]
        \centering
        \begin{tabular}{cl}
            Case 1 &  $|a_1| = |b_1| = 1$. \\
            Case 2 & $|a_1| =1$ and $|b_1| < 1$.\\
            Case 3 & $|a_1| < 1$ and $|b_1| < 1$.
        \end{tabular}
    \end{table}
    
    {\bf \underline{Case 1.}} Suppose $|a_1| = |b_1|=1$.
    In this case, $F(z) - a_0$ and $G(w) - b_0$ have compositional inverse 
    in $z\Z_p\lb z \rb$ and $w \Z_p \lb w \rb$ respectively.
    This implies the morphism
    \begin{align}
        \f \colon \Q_p\{pz, pw\} \longrightarrow \Q_p\{pz, pw\}\ ; z \mapsto F(z), 
        w \mapsto G(w)
    \end{align}
    is an isometric isomorphism.
    In particular, $F \times G \colon (p\Z_p)^2 \longrightarrow (p\Z_p)^2$ 
    is a bijection. This proves PIQ for the case where $Z$ is a fixed point of $F \times G$.
    When $Z = (\Phi=0)$, recall that we have $\Phi \mid \f(\Phi)$ in $\Q_p\{pz, pw\} $.
    Since $\f$ is a ring automorphism, the number of prime factors of $\f(\Phi)$
    counted with multiplicity is equal to that of $\Phi$.
    Hence we get $\f(\Phi) = \g \Phi$ for some $\g \in \Q_p\{pz, pw\}^{\times}$.
    Thus for $(a,b) \in (p\Z_p)^2$, we have
    \begin{align}
        &(F \times G)(a,b) \in Z \iff \Phi(F(a),G(b)) = 0 \iff \f(\Phi)(a,b)=0 \\
        &\iff \g(a,b)\Phi(a,b) = 0 \iff \Phi(a,b) = 0 \iff (a,b ) \in Z. 
    \end{align}
    This proves PIQ.

    {\bf \underline{Case 2.}} Suppose $|a_1| = 1$ and $|b_1| < 1$.
    In this case, $F(z) - a_0$ has compositional inverse in $z\Z_p \lb z \rb$
    and hence 
    \begin{align}
        \Q_p \{ pz \} \longrightarrow \Q_p\{ pz \}\ ; z \mapsto F(z)
    \end{align}
    is an isometric isomorphism.
    We claim that $G \colon p\Z_p \longrightarrow p\Z_p$ has a unique fixed point.
    Indeed, consider $G(w)-w-b_0 = (b_1 -1)w + b_2 w^2 + \cdots \in \Z_p \lb w \rb$. Since $|b_1 - 1| = 1$, this has a compositional inverse in 
    $w \Z_p\lb w \rb$.
    Therefore, $p\Z_p \longrightarrow p \Z_p \ ; w \mapsto G(w)-w-b_0$
    is a bijection and thus $G$ has a unique fixed point in $p\Z_p$.
    
    Let $c \in p\Z_p$ be the fixed point.
    Let $\widetilde{G}(w) = G(w+c)-c \in \Z_p \lb w \rb$.
    Then we have the commutative diagram 
    \begin{equation}\label{diagram:conj-by-translation}
    \begin{tikzcd}
    &[-3em] w \arrow[r,mapsto] & G(w)  \\[-2em]
    w \ar[d, mapsto] &[-3em] \Q_p\{ pw \} \arrow[r] \arrow[d] & \Q_p\{ pw \}  \arrow[d]  &[-3em] w \ar[d, mapsto] \\
    w +c &[-3em] \Q_p\{ pw \}  \arrow[r] & \Q_p\{ pw \}  &[-3em] w + c\\[-2em]
    & w \ar[r,mapsto] & \widetilde{G}(w) &
    \end{tikzcd}
    \end{equation}
    where the vertical arrows are isometric isomorphisms.
    We also have
    \begin{align}
        &\widetilde{G}(0) = 0\\
        &\widetilde{G}'(0) = G'(c) = b_1 + 2b_2c + \cdots, \ \text{and hence}\ 
        |\widetilde{G}'(0)| \leq p^{-1}.
    \end{align}
    Note that for arbitrary $b \in \C_p$ with $|b| \leq p^{-1}$
    (we fix a $\C_p$, completion of the algebraic closure of $\Q_p$, and will work inside it), we have
    \begin{align}
        |\widetilde{G}(b)| \leq \max\{ |\widetilde{G}'(0)|, |b| \}|b| \leq p^{-1}|b|
    \end{align}
    and hence 
    \begin{align}\label{ineq:G-is-contraction-map}
        |\widetilde{G}^{\circ n}(b)| \leq p^{-n-1}
    \end{align}
    for all $n \in \Z_{\geq 0}$.
    To prove PIQ for $F \times G$, it is enough to prove it for 
    $F \times \widetilde{G}$ acting on $(p\Z_p)^2$, and so we replace 
    $G$ with $\widetilde{G}$.
    
    {\bf Case 2-1.}
    First we treat the case where $G'(0) =: \l \neq 0$.
    Let us write $|\l| = p^{-r}$ for some $r \in \Z_{\geq 1}$.

    By \cref{thm:p-adic-unif,lem:isom-of-small-discs}, we have the following commutative diagram
    \begin{equation}
    \begin{tikzcd}
     w \arrow[r,mapsto] & G(w)  \\[-2em]
     \Q_p\{ pw \} \arrow[r] \arrow[d,hookrightarrow] & \Q_p\{ pw \}  \arrow[d,hookrightarrow]   \\
     \Q_p\{ p^n w \}  \arrow[r] \arrow[d,"\iota",swap] & \Q_p\{ p^n w \} \arrow[d,"\iota"] \\
    \Q_p\{ p^N w \} \arrow[r] & \Q_p\{ p^N w \}\\[-2em]
    w \ar[r,mapsto] & \l w &
    \end{tikzcd}
    \end{equation}
    where $n, N \in \Z_{\geq 1}$, the top vertical arrows are inclusions,
    and $\iota$ is an isometric isomorphism such that $\iota(w)(0)=0$.
    Taking completed tensor product with $\Q_p\{ pz \}$, we get
    \begin{equation}
        \begin{tikzcd}
            \Q_p\{ pz, pw \} \arrow[dd,"\chi",swap,bend right=70] \arrow[r,"\substack{z \mapsto F(z) \\ w \mapsto G(w)}"] \arrow[d,hookrightarrow] & \Q_p\{ pz, pw \} \arrow[d,hookrightarrow] \\
            \Q_p\{ pz, p^n w \} \arrow[r] \arrow[d,"\sim", sloped] \arrow[d,phantom,"\scriptsize \text{$\id \widehat{\otimes} \iota$}"',shift right=1.2em]& \Q_p\{ pz, p^n w \} \arrow[d,"\sim", sloped] \arrow[d,phantom,"\scriptsize \text{$\id \widehat{\otimes} \iota$}"',shift right=1.2em] \\
            \Q_p\{ pz, p^N w \} \arrow[r,"\a","\substack{z \mapsto F(z) \\ w \mapsto \l w}"'] & \Q_p\{ pz, p^N w \}
        \end{tikzcd}
    \end{equation}
    where $\chi$ is the composite of the vertical arrows and
    $\a$ is the $\Q_p$-algebra morphism defined 
    by sending $z$ to $F(z)$ and $w$ to $\l w$.
    The map $\chi$ induces 
    \begin{align}
        (p\Z_p)^2 \supset p\Z_p \times p^n\Z_p \xleftarrow[(\id \widehat{\otimes} \iota)^*]{\sim} p\Z_p \times p^N\Z_p.
    \end{align}
    Let 
    \begin{align}
        Z' = \left((\id \widehat{\otimes} \iota)^*\right)^{-1}(Z \cap (p\Z_p \times p^n\Z_p)) \subset p\Z_p \times p^N\Z_p.
    \end{align}
    By \cref{ineq:G-is-contraction-map}, we have
    \begin{align}
        (F \times G)^{\circ (n-1)}( (p\Z_p)^2 ) \subset p\Z_p \times p^n \Z_p. 
    \end{align}
    Therefore, it is enough to prove PIQ for the map
    $F(z) \times \l w$ acting on $p\Z_p \times p^N \Z_p$
    and $Z'$.
    When $Z'$ consists of a single fixed point, PIQ is clear
    because $F \colon p\Z_p \longrightarrow p\Z_p$ is a bijection and the only fixed point of 
    $\l w$ is $0$.
    Consider the case $Z = (\Phi = 0)$.
    In this case, set $\Psi = \chi(\Phi)$.
    By \cref{cor:prime-fac-bdd}, there is arbitrary large $N' \geq N$ such that
    for all $m \geq N'$, we have
    \begin{align}
        &\text{number of prime factors of $\Psi$ in $\Q_p\{pz, p^m w\}$ counted with multiplicity} \\
        &\leq \text{that of $\Psi$ in $\Q_p\{pz, p^{N'} w\}$}.
    \end{align}
    We replace $N$ with such large $N'$ (cf.\ \cref{lem:isom-of-small-discs}).
    Then, consider the decomposition 
    \begin{equation}
        \begin{tikzcd}
            \Q_p\{pz, p^N w\} \arrow[dr,hookrightarrow] \arrow[rr,"\a"] &[-2em] &[-2em] \Q_p\{pz, p^N w\} \\
            & \Q_p\{pz, p^{N+r} w\} \arrow[ru,"\substack{z\mapsto F(z) \\ w \mapsto \l w}",swap, sloped, "\sim"'] &
        \end{tikzcd}
    \end{equation}
    where the first map is inclusion, and the second one is an isometric isomorphism.
    By this diagram, the number of prime factors of $\a(\Psi)$
    is at most that of $\Psi$ by the choice of $N$.
    Then we have
    \begin{align}
        \a(\Psi) = \g \Psi
    \end{align}
    for some $\g \in \Q_p\{pz, p^N w\}^{\times}$
    because $\Psi \mid \a(\Psi)$.
    Therefore, for $(a,b) \in p\Z_p \times p^N \Z_p$, we have
    \begin{align}
        &(F(a),\l b) \in Z' \iff \Psi((F(a),\l b)) = 0 
        \iff \a(\Psi)(a,b)=0 \\
        &\iff \g(a,b)\Psi(a,b) = 0 
        \iff \Psi(a,b) = 0 \iff (a,b ) \in Z'
    \end{align}
    and PIQ follows from this.

    {\bf Case 2-2.}
    Next suppose $G'(0)=0$.
    Let 
    \begin{align}
        G(w) = b_d w^d + \text{higher order terms}
    \end{align}
    with $b_d \neq 0$.
    Set $K = \Q_p( b_d^{1/(d-1)} )$ where $b_d^{1/(d-1)}$ is a fixed 
    $(d-1)$-th root of $b_d$.
    By \cref{thm:p-adic-unif,lem:isom-of-small-discs}, 
    we have the following commutative diagram
    \begin{equation}
    \begin{tikzcd}
     w \arrow[r,mapsto] & G(w)  \\[-2em]
     K\{ pw \} \arrow[r] \arrow[d,hookrightarrow] & K\{ pw \}  \arrow[d,hookrightarrow]   \\
     K\{ p^n w \}  \arrow[r] \arrow[d,"\iota",swap] & K\{ p^n w \} \arrow[d,"\iota"] \\
    K\{ p^N w \} \arrow[r] & K\{ p^N w \}\\[-2em]
    w \ar[r,mapsto] & w^d &
    \end{tikzcd}
    \end{equation}
    where $n, N \in \Z_{\geq 1}$, the top vertical arrows are inclusions,
    and $\iota$ is an isometric isomorphism such that $\iota(w)(0)=0$.
    Taking completed tensor product with $K\{ pz \}$, we get
    \begin{equation}
        \begin{tikzcd}
            K\{ pz, pw \} \arrow[dd,"\chi",swap,bend right=70] \arrow[r,"\substack{z \mapsto F(z) \\ w \mapsto G(w)}"] \arrow[d,hookrightarrow] & K\{ pz, pw \} \arrow[d,hookrightarrow] \\
            K\{ pz, p^n w \} \arrow[r] \arrow[d,"\sim", sloped] \arrow[d,phantom,"\scriptsize \text{$\id \widehat{\otimes} \iota$}"',shift right=1.2em] & K\{ pz, p^n w \} \arrow[d,"\sim", sloped] \arrow[d,phantom,"\scriptsize \text{$\id \widehat{\otimes} \iota$}"',shift right=1.2em] \\
            K\{ pz, p^N w \} \arrow[r,"\a","\substack{z \mapsto F(z) \\ w \mapsto w^d}"'] & K\{ pz, p^N w \}
        \end{tikzcd}
    \end{equation}
    where $\chi$ is the composite of the vertical arrows and
    $\a$ is the $K$-algebra morphism defined 
    by sending $z$ to $F(z)$ and $w$ to $w^d$.
    Set $\Psi=\chi(\Phi)$.
    As before, we have 
    \begin{align}
        (F \times G)^{\circ (n-1)}(\bD_K(p^{-1})^2) 
        \subset \bD_K(p^{-1}) \times \bD_K(p^{-n}).
    \end{align}
    Thus, it is enough to prove PIQ for $F(z) \times w^d$ acting 
    on $\bD_K(p^{-1}) \times \bD_K(p^{-N})$ and the invariant subset $Z'$ where   
    \begin{align}
        Z' =
        \begin{cases}
            \text{a fixed point, or}\\
            \{(a,b) \in \bD_K(p^{-1}) \times \bD_K(p^{-N}) \mid \Psi(a,b)=0 \}.
        \end{cases}
    \end{align}
    When $Z'$ is a fixed point, PIQ holds because 
    $F \colon \bD_K(p^{-1}) \longrightarrow \bD_K(p^{-1})$ is bijective and
    $w^d$ has only one fixed point, $0$, in $\bD_K(p^{-N})$.
    Suppose $Z' = \{(a,b) \in \bD_K(p^{-1}) \times \bD_K(p^{-N}) \mid \Psi(a,b)=0 \}$.

    We first note that we may assume $\Psi$ is prime.
    Indeed, we claim that $\a$ acts on the set of minimal prime ideals
    containing $\Psi$ by pull-back.
    Let $\p \subset K\{pz, p^N w\}$ be a minimal prime ideal containing $\Psi$.
    Then we have $\Psi \in \a^{-1}(\p)$ and thus it is enough to show that
    $\a^{-1}(\p)$ is not a maximal ideal.
    Consider the following decomposition.
    \begin{equation}
        \begin{tikzcd}
            K\{pz, p^N w\} \arrow[dr,hookrightarrow,"i"] \arrow[rr,"\a"] &[-2em] &[-2em] K\{pz, p^N w\} \\
            & K\{pz, p^{Nd} w\} \arrow[ru,"\substack{z\mapsto F(z) \\ w \mapsto w^d}",swap, sloped,"j"'] &
        \end{tikzcd}
    \end{equation}
    The second map $j$ is finite injective and thus $j^{-1}(\p)$ is a height one 
    prime ideal.
    Then $i^{-1}(j^{-1}\p) = \a^{-1}(\p)$ is not a maximal ideal
    because $K\{pz, p^{Nd} w\}$ is a Jacobson ring and Nullstellensatz holds.
    Finally, we replace $\a$ with some iterates so that
    every minimal prime of $\Psi$ is invariant or mapped to an invariant one.
    Hence, it is enough to prove PIQ for invariant minimal primes, and this means 
    we may assume $\Psi$ is a prime.
    
    Now we consider the case $w \mid \Psi$.
    In this case we can write $\Psi = \g w$ for some 
    $\g \in K\{pz, p^Nw\}^{\times}$. 
    Thus for $(a,b) \in \bD_K(p^{-1}) \times \bD_K(p^{-N})$, 
    \begin{align}
        &(F(a), b^d) \in Z' \iff \Psi(F(a), b^d) = 0 \iff \g(F(a),b^d)b^d=0\\
        &\iff b=0 \iff (a,b) \in Z'
    \end{align}
    and we are done.

    Suppose $w \not\mid \Psi$.
    Let us write 
    \begin{align}
        \Psi(z,w) &= \Psi_0(z) + w\Psi_1(z,w) \\
        &\qquad \text{for some}\  \Psi_0 \in K\{pz\} \smallsetminus \{0\}
        \ \text{and}\ \Psi_1 \in K\{pz, p^Nw\},\\
        \a(\Psi) &= \g \Psi  \ \text{for some}\ \g \in  K\{pz, p^Nw\}.
    \end{align}
    Let us also write $\g = \g_0 + w \g_1$
    with $\g_0 \in K\{pz\}$ and $\g_1 \in K\{pz, p^Nw\}$.
    Then we have
    \begin{align}
        \a(\Psi)(z,w) &= \Psi_0(F(z)) + w^d\Psi(F(z),w^d) \\
        &= \left(\g_0(z) + w \g_1(z,w)\right) 
        \left(\Psi_0(z) + w\Psi_1(z,w) \right).
    \end{align}
    Thus we have 
    \begin{align}
        \Psi_0(F(z)) = \g_0(z) \Psi_0(z).
    \end{align}
    Since $K\{pz\} \longrightarrow K\{pz\} ; z \mapsto F(z)$
    is an isometric isomorphism, the number of prime factors of $\Psi_0(F(z))$
    is equal to that of $\Psi_0(z)$.
    Thus $\g_0 \in K\{pz\}^{\times}$.
    If we take $N' \geq N$ so that
    $\|\g_0 \|_{K\{pz\}} > \| w\g_1\|_{K\{pz, p^{N'}w\}}$,
    then $\g \in K\{pz, p^{N'}w\}^{\times}$.
    Therefore, for $(a,b) \in \bD_K(p^{-1}) \times \bD_K(p^{-N})$,
    \begin{align}
        &(F^{\circ n}(a), b^{d^n}) \in Z' \iff \Psi(F^{\circ n}(a), b^{d^n}) =0\\
        &\iff 
        \g(F^{\circ (n-1)}(a), b^{d^{n-1}}) \Psi(F^{\circ (n-1)}(a), b^{d^{n-1}})=0\\
        &\iff \Psi(F^{\circ (n-1)}(a), b^{d^{n-1}})=0\\
        &\iff (F^{\circ (n-1)}(a), b^{d^{n-1}}) \in Z'
    \end{align}
    if $|b^{d^{n-1}}| \leq p^{-N'}$, which is true when
    $p^{-Nd^{n-1}} \leq p^{-N'}$.
    This proves PIQ for this case.

    {\bf \underline{Case 3.}}
    Finally, we treat the case where $|a_1|,|b_1|<1$.
    As at the beginning of Case 2,
    we may assume $F(0)=G(0)=0$ by taking conjugate by translation.
    Moreover, we have
    \begin{align}
        &|F'(0)| \leq p^{-1},\ |G'(0)| \leq p^{-1}\\
        &(F \times G)^{\circ n} \left(\bD_{\C_p}(p^{-1})^2\right) \subset \bD_{\C_p}(p^{-n-1})^2.
        \label{eq:FG-is-contraction}
    \end{align}

    {\bf Case 3-1.}
    Let us first treat the case $F'(0)=: \l \neq 0$.
    By \cref{thm:p-adic-unif,lem:isom-of-small-discs}, there is a finite extension
    $K$ of $\Q_p$ and the following commutative diagram.
    \begin{equation}
        \begin{tikzcd}
            K\{ pz, pw \} \arrow[dd,bend right=70,"\chi",swap] \arrow[r,"\substack{z \mapsto F(z) \\ w \mapsto G(w)}"] \arrow[d,hookrightarrow] & K\{ pz, pw \} \arrow[d,hookrightarrow]\\
            K\{ p^nz, p^nw \} \arrow[d,"\sim",sloped] \arrow[d,phantom, "\text{\scriptsize$\iota$}",shift right={.5em}] \arrow[r] & K\{ p^nz, p^nw \} \arrow[d,"\sim",sloped] \arrow[d,phantom, "\text{\scriptsize$\iota$}",shift right={.5em}]\\
            K\{ p^Nz, p^Nw \} \arrow[r,"\a","\substack{z \mapsto \l z \\ w \mapsto g(w)}"'] & K\{ p^Nz, p^Nw \}
        \end{tikzcd}
    \end{equation}
    Here, $n,N \in \Z_{\geq 1}$,
    the top vertical arrows are inclusion, $\iota$ is an isometric isomorphism, 
    and $\a$ is the $K$-algebra morphism defined by sending 
    $z$ to $\l z$ and $w$ to $g(w)$, where $g(w)= \mu w$ for some $\mu \in p\Z_p \smallsetminus \{0\}$
    or $g(w)=w^d$ for some $d\in \Z_{\geq 2}$.
    Let $\Psi = \chi(\Phi)$.
    By \cref{eq:FG-is-contraction}, it is enough to prove PIQ for the map
    $\l z \times g$ acting on $\bD_K(p^{-N})^2$ and the invariant subset 
    \begin{align}
        Z' =
        \begin{cases}
            \text{a fixed point, or}\\
            \{(a,b) \in \bD_K(p^{-N})^2 \mid \Psi(a,b)=0 \}.
        \end{cases}
    \end{align}
    When $Z'$ is a fixed point, PIQ is clear because the only fixed point is $(0,0)$
    and its only preimage is itself.
    Suppose $Z'=(\Psi=0)$.
    As in Case 2-2, by replacing $\a$ with iterates, we may assume $\Psi$ is prime.
    
    Consider the case $w \mid \Psi$.
    In this case we can write $\Psi = \g w$ for some 
    $\g \in K\{p^Nz, p^Nw\}^{\times}$. 
    Thus for $(a,b) \in \bD_K(p^{-N})^2$, 
    \begin{align}
        &(\l a, g(b)) \in Z' \iff \Psi(\l a, g(b)) = 0 \iff \g(\l a,g(b)) g(b)=0\\
        &\iff b=0 \iff (a,b) \in Z'
    \end{align}
    and we are done.

    Suppose $w \not\mid \Psi$.
    Let us write 
    \begin{align}
        &\Psi(z,w) = \Psi_0(z) + w\Psi_1(z,w) \\
        & \qquad \text{for some}\  \Psi_0 \in K\{p^Nz\} \smallsetminus \{0\}
        \ \text{and}\ \Psi_1 \in K\{p^Nz, p^Nw\},\\
        &\a(\Psi) = \g \Psi  \ \text{for some}\ \g \in  K\{p^Nz, p^Nw\}.
    \end{align}
    Let us also write $\g = \g_0 + w \g_1$
    for some $\g_0 \in K\{p^Nz\}$ and $\g_1 \in K\{p^Nz, p^Nw\}$.
    Then we have
    \begin{align}
        \alpha(\Psi)(z,w) &=
        \Psi_0(\l z) + g(w) \Psi_1(\l z, g(w)) \\
        &= \left( \g_0(z) + w \g_1(z,w)\right) \left( \Psi_0(z) + w \Psi_1(z,w)\right)
    \end{align}
    and thus
    \begin{align}
        \Psi_0(\l z) = \g_0(z) \Psi_0(z).
    \end{align}
    Since $\ord_{K\{p^Nz\}} \Psi_0(\l z) \leq \ord_{K\{p^Nz\}} \Psi_0(z) $ by $|\lambda|<1$,
    we get $\ord_{K\{p^Nz\}} \g_0(z) = 0$, i.e.\ $\g_0 \in K\{p^Nz\}^{\times}$.
    Then there is $N' \geq N$ such that $\g \in K\{p^Nz, p^{N'}w\}^{\times}$.
    Now for $(a,b) \in \bD_K(p^{-N})^2$, we have
    \begin{align}
        &(\l^n a, g^{\circ n}(b)) \in Z' \iff \Psi(\l^n a, g^{\circ n}(b))=0\\
        &\iff \g(\l^{n-1} a, g^{\circ (n-1)}(b)) \Psi(\l^{n-1} a, g^{\circ (n-1)}(b))=0 \\
        &\iff \Psi(\l^{n-1} a, g^{\circ (n-1)}(b)) =0
    \end{align}
    if $|g^{\circ (n-1)}(b)| \leq p^{-N'}$, which is true when
    $p^{-N-(n-1)} \leq p^{-N'}$. This proves PIQ for this case.

    {\bf Case 3-2.}
    Finally we treat the case $F'(0)=G'(0)=0$.
    Let
    \begin{align}
        &F(z) = a_d z^d + \text{higher order terms}\\
        &G(w) = b_e w^e + \text{higher order terms}
    \end{align}
    where $d,e \in \Z_{\geq 2}$ and $a_d, b_e \neq 0$.

    By \cref{thm:p-adic-unif,lem:isom-of-small-discs}, there is a finite extension
    $K$ of $\Q_p$ and the following commutative diagram.
    \begin{equation}
        \begin{tikzcd}
            K\{ pz, pw \} \arrow[dd,bend right=70,"\chi",swap] \arrow[r,"\substack{z \mapsto F(z) \\ w \mapsto G(w)}"] \arrow[d,hookrightarrow] & K\{ pz, pw \} \arrow[d,hookrightarrow]\\
            K\{ p^nz, p^nw \} \arrow[d,"\sim",sloped] \arrow[d,phantom, "\text{\scriptsize$\iota$}",shift right={.5em}] \arrow[r] & K\{ p^nz, p^nw \} \arrow[d,"\sim",sloped] \arrow[d,phantom, "\text{\scriptsize$\iota$}",shift right={.5em}]\\
            K\{ p^Nz, p^Nw \} \arrow[r,"\a","\substack{z \mapsto z^d \\ w \mapsto w^e}"'] & K\{ p^Nz, p^Nw \}
        \end{tikzcd}
    \end{equation}
    Here, $n,N \in \Z_{\geq 1}$,
    the top vertical arrows are inclusion, $\iota$ is an isometric isomorphism, 
    and $\a$ is the $K$-algebra morphism defined by sending 
    $z$ to $z^e$ and $w$ to $w^e$.
    Let $\Psi = \chi(\Phi)$.
    By \cref{eq:FG-is-contraction}, it is enough to prove PIQ for the map
    $z^d \times w^e$ acting on $\bD_K(p^{-N})^2$ and the invariant subset 
    \begin{align}
        Z' =
        \begin{cases}
            \text{a fixed point, or}\\
            \{(a,b) \in \bD_K(p^{-N})^2 \mid \Psi(a,b)=0 \}.
        \end{cases}
    \end{align}
    When $Z'$ is a fixed point, PIQ is clear because the only fixed point is $(0,0)$
    and its only preimage is itself.
    Suppose $Z'=(\Psi=0)$.

    Let us define a sequence of finite extensions of $K$ as follows.
    Let $K_0$ be a finite extension of $K$ such that
    the prime factorization of $\Psi \in K_0\{p^Nz, p^Nw\}$ stabilizes along 
    ground field extensions (such $K_0$ exists by \cref{cor:prime-fac-fld-ext}).
    Next, let $K_1$ be a finite extension of 
    $K_0$ such that the prime factorization of $\Psi \in K_1\{p^{Nd}z, p^{Ne}w\}$ stabilizes.
    Having defined $K_0, K_1, \dots, K_m$, the next finite extension $K_{m+1}/K_m$ is defined
    such that the prime factorization of $\Psi \in K_{m+1}\{p^{Nd^{m+1}}z, p^{Ne^{m+1}}w\}$ stabilizes.
    Inductively we get $K_0 \subset K_1 \subset \cdots$.
    We set $R_m = K_m \{p^{Nd^m}z , p^{Ne^m}w \}$. Then we have
    \begin{align}\label{seq:Rm}
        R_0 \subset R_1 \subset R_2 \subset \cdots.
    \end{align}
    By \cref{lem:prim-fact-ev-stab}, prime factorization of $\Psi$ in $R_m$
    eventually stabilizes in the following sense:
    There is $m_0 \geq 0$ such that 
    \begin{align}
        \Psi = \g P_1 \cdots P_l \ \text{in $R_{m_0}$}
    \end{align}
    where $\g \in R_{m_0}$ is eventually unit with respect to \cref{seq:Rm},
    $P_i \in R_{m_0}$ is prime, and for any $m \geq m_0$,
    $P_i = v P_i'$ in $R_m$ for some prime $P_i' \in R_m$ and eventual unit $v \in R_m$.

    To prove PIQ, we may replace $\a \colon K\{p^Nz, p^Nw\} \longrightarrow K\{p^Nz, p^Nw\}$ with
    \begin{align}
        K_{m_0} \{p^{Nd^{m_0}}z , p^{Ne^{m_0}}w \} \longrightarrow
        K_{m_0} \{p^{Nd^{m_0}}z , p^{Ne^{m_0}}w \}\ ; z \mapsto z^d,
         w \mapsto w^e.
    \end{align}
    Then, as in Case 2-2, it is enough to prove PIQ for an iterate $\a^t$ of $\a$,
    and each prime factor $Q$ of $\Psi$ in $R_{m_0}$ that is invariant under $\a^t$
    i.e.\ $Q \mid \a^t(Q)$.
    Consider $L = K_{m_0 + t}(\zeta_{d^t},\zeta_{e^t})$, 
    where $\zeta_{l}$ stands for an $l$-th primitive root of $1$ in $\C_p$.
    (We add $\zeta_{d^t}, \zeta_{e^t}$ for the later use.)
    Then, form the following commutative diagram
    \begin{equation}
        \begin{tikzcd}
            L\{p^{Nd^{m_0}}z, p^{Ne^{m_0}}w  \} \arrow[rr,"\a^t"] \arrow[rd,hookrightarrow]
            &[-1.5em] &[-1.5em] L\{p^{Nd^{m_0}}z, p^{Ne^{m_0}}w  \}\\
            & L\{p^{Nd^{m_0+t}}z, p^{Ne^{m_0+t}}w  \}  
            \arrow[ru,"\substack{z \mapsto z^{d^t} \\ w \mapsto w^{e^t}}",sloped,swap] &
        \end{tikzcd}
    \end{equation}
    where the first slant arrow is the inclusion.
    By the choice of $K_m$ and $L$, the element $Q$ is still prime in $L\{p^{Nd^{m_0}}z, p^{Ne^{m_0}}w  \}$.
    Moreover, either 
    \begin{itemize}
        \item $Q$ is eventually unit with respect to $(L\{p^{Nd^m}z, p^{Ne^m}w\})_m$, or
        \item $Q = vQ'$ in $L\{p^{Nd^{m_0+t}}z, p^{Ne^{m_0+t}}w  \} $, where
        $Q'$ is a prime element of \\
        $L\{p^{Nd^{m_0+t}}z, p^{Ne^{m_0+t}}w  \}$ and 
        $v$ is eventually unit with respect to \\
        $(L\{p^{Nd^m}z, p^{Ne^m}w\})_m$.
    \end{itemize}
    Here, we claim that the first case does not happen.
    Indeed, if $Q$ is eventually a unit, then it is a unit in
    $L\{p^{Nd^{m_0+kt}}z, p^{Ne^{m_0+kt}}w\}$ for some $k \geq 1$.
    We have $Q \mid \a^t(Q)$ and this implies $Q \mid \a^{tk}(Q)$ in 
    $L\{ p^{Nd^{m_0}}z, p^{Ne^{m_0}}w\}$.
    Since $\a^{tk}$ factors through $L\{p^{Nd^{m_0+kt}}z, p^{Ne^{m_0+kt}}w\}$,
    $\a^{tk}(Q)$ is unit and hence so is $Q$. This contradicts to $Q$ being prime.
    
    Let us set $r = p^{-Nd^{m_0}}$ and $s = p^{-Ne^{m_0}}$.
    Then, it is enough to prove PIQ for
    \begin{align}
        \a^t \colon L\{r^{-1}z, s^{-1}w  \} \longrightarrow L\{r^{-1}z, s^{-1}w  \}
    \end{align}
    and $Q$.
    
    We reset the notation: we write $K$ in place of $L$,
    $\a$ in place of $\a^t$, $\Psi$ in place of $Q$, $\Psi'$ in place of $Q'$,
    and $\g$ in place of $v$.
    We remark that by this replacement, the original degrees $d,e$ are replaced with
    their power $d^t, e^t$.

    The situation is summarized as follows.
    \begin{itemize}
        \item $K$ is a finite extension of $\Q_p$ such that $\zeta_d, \zeta_e \in K$.
        \item $r,s \in |K^{\times}|$, $r,s \leq p^{-1}$.
        \item $\Psi \in K\{r^{-1}z, s^{-1}w  \}$ is a prime element that is not eventually unit with respect to $(K\{r^{-d^m}z,s^{-e^m}w\})_m$.
        \item $\Psi = \g \Psi'$ in $K\{r^{-d}z, s^{-e}w \} $, where $\Psi'$ is a prime in $K\{r^{-d}z, s^{-e}w \}$ and $\g$ is eventually unit with respect to $(K\{r^{-d^m}z,s^{-e^m}w\})_m$.
    \end{itemize}
    We have the following commutative diagram.
    \begin{equation}
        \begin{tikzcd}
            \Psi \arrow[rd,mapsto] \arrow[r,phantom,"\in"] 
            &[-1.5em] K\{r^{-1}z, s^{-1}w  \} \arrow[rr,"\a"] \arrow[rd,hookrightarrow]
            &[-2em] &[-2em] K\{r^{-1}z, s^{-1}w   \}\\[1em]
            & |[xshift=3em]| \g \Psi' \arrow[r,phantom,"\in"] & K\{r^{-d}z, s^{-e}w \}  
            \arrow[ru,"\substack{z \mapsto z^{d} \\ w \mapsto w^{e}}",sloped,swap,"\widetilde{\a}"'] &
        \end{tikzcd}
    \end{equation}
    Here, we call the second slant arrow $\widetilde{\a}$.

    If $z \mid \Psi$, then we have $\Psi = u z$ for some $u \in K\{r^{-1}z, s^{-1}w\}$
    because $\Psi$ is prime.
    Then for $(a,b) \in \bD_K(r) \times \bD_K(s)$, we have
    \begin{align}
        \Psi(a^d, b^e)=0 \iff u(a^d, b^e)a^d=0 \iff a=0 \iff \Psi(a,b)=0
    \end{align}
    and PIQ follows.
    By the same reason, we are done if $w \mid \Psi$.
    
    Now we assume $z \not\mid \Psi$ and $w \not\mid \Psi$.
    We claim that $d=e$.
    First note that have $\Psi(0,0)=0$ because $\Psi$ is not eventually unit.
    Recall that we have $\Psi \mid \a(\Psi) = \widetilde{\a}(\g)\widetilde{\a}(\Psi')$.
    Since $\g$ is eventually a unit, it is easy to see that
    $\widetilde{\a}(\g)$ is also eventually unit with respect to
    $\left(K\{r^{-d^m}z,s^{-e^m}w\}\right)_m$.
    Thus $\Psi$ cannot divide $\widetilde{\a}(\g)$ 
    and hence $\Psi \mid \widetilde{\a}(\Psi')$.

    Let us consider the action of 
    $G:=\langle \zeta_d \rangle \times \langle \zeta_e \rangle $
    on $K\{ r^{-1}z, s^{-1}w \}$ defined by
    \begin{align}
        (\zeta_d^i, \zeta_e^j)\cdot f(z,w) = f(\zeta_d^i z, \zeta_e^j w).
    \end{align}
    We have $K\{ r^{-1}z, s^{-1}w \}^G = \widetilde{\a}\left(K\{r^{-d}z, s^{-e}w \}  \right)$.
    Thus, $G$ acts transitively on the set of prime ideals of 
    $K\{ r^{-1}z, s^{-1}w \}$ that lie over $\widetilde{\a}(\Psi')$.
    Thus all the prime factors of $\widetilde{\a}(\Psi')$ are of the form
    $\Psi(\zeta_d^i z, \zeta_e^j w)$.
    Thus we have
    \begin{align}\label{eq:fac-alphaPsi}
        \a(\Psi) = \Psi(z^d,w^e) 
        = \widetilde{\a}(\g) u \prod_{i=1}^k \Psi(\zeta_i z, \eta_i w)
    \end{align}
    for some $u \in K\{r^{-1}z, s^{-1}w\}^{\times }$,
    $k \in \Z_{\geq 1}$, and $(\zeta_i, \eta_i) \in G$.
    Let us write
    \begin{align}
        \Psi(z,w) &= \Psi_0(z) + \Psi_1(w) + zw \Psi_2(z,w)\\
        \widetilde{\a}(\g) u &= c + \d_0(z) + \d_1(w) + zw\d_2(z,w)
    \end{align}
    where $\Psi_0, \d_0 \in zK\{r^{-1}z\}$, $\Psi_1, \d_1 \in wK\{s^{-1}w\}$,
    $\Psi_2, \d_2 \in K\{r^{-1}z, s^{-1}w\}$, and $c \in K$.
    Note that $c \neq 0$ since $\widetilde{\a}(\g)u$ is eventually unit.
    By substituting into \cref{eq:fac-alphaPsi}, we have
    \begin{align}
        &\hphantom{=} \Psi_0(z^d) + \Psi_1(w^e) + z^dw^e \Psi_2(z^d,w^e) \\
        &= \left(c + \d_0(z) + \d_1(w) + zw\d_2(z,w) \right)\\
        & \quad \times \prod_{i=1}^k  \left(\Psi_0(\zeta_i z) + \Psi_1(\eta_i w) + \zeta_i \eta_i zw \Psi_2(\zeta_i z,\eta_i w) \right).
    \end{align}
    Therefore we get
    \begin{align}
        \Psi_0(z^d) &= \left(c + \d_0(z) \right)\prod_{i=1}^k  \Psi_0(\zeta_i z)\\
        \Psi_1(w^e) &= \left(c +  \d_1(w) \right)\prod_{i=1}^k  \Psi_1(\eta_i w).
    \end{align}
    We get $d = k = e$ by comparing the lowest degree terms.

    Next, we will detect the shape of the zero set of $\Psi$.
    In the following claim, we use the fact $p \not\mid d$.
    This can be seen as follows.
    By our assumption, $p$ does not divide any ramification indices of 
    original maps $f,g \colon \P^1 \longrightarrow \P^1$.
    In the course of reduction in Step 1 and Step 2 so far,
    we replaced $f,g$ with iterates and $d$ with its power.
    Therefore, $d$ is a ramification index of iterated $f$ (and $g$),
    and by the chain rule, we see that $d$ is a product of
    ramification indices of $f$. Thus, $p$ does not divide $d$.
    
    \begin{claim}\label{claim:zero-locus-Psi}
        The set $\left\{ (a,b) \in \bD_{\C_p}(r) \times \bD_{\C_p}(s) \ \middle|\ \Psi(a,b)=0 \right\}$ is contained in
        \begin{align}
            &\hphantom{\cup} \left(\bD_{\C_p}(r) \times \{0\}\right)
            \cup \left(\{0\} \times \bD_{\C_p}(s)\right)\\
            &\cup \bigcup_{k,l \geq 1} \left\{ (a,b) \in \bD_{\C_p}(r) \times \bD_{\C_p}(s) \ \middle|\ a^k-b^l=0 \right\}
        \end{align}
    \end{claim}
    \begin{proof}[Proof of \cref{claim:zero-locus-Psi}]
        Let us write 
        \begin{align}
            \Psi = \sum_{i,j \geq 0} c_{ij}z^iw^j.
        \end{align}
        Let $a,b \in \C_p$ be such that $|a| \leq r$, $|b| \leq s$, and
        $\Psi(a,b)=0$.
        It is enough to show that $a^k-b^l=0$ for some $k,l \geq 1$
        under the assumption $ab \neq 0$.
        Let
        \begin{align}
            M &= \max\{|a^ib^j| \mid c_{ij} \neq 0  \} \text{ and}\\
            S &= \{ (i,j) \in \Z_{\geq 0}^2 \mid |a^ib^j|=M, c_{ij} \neq 0 \}.
        \end{align}
        Since $|a|,|b| < 1$, $M$ exists and $S$ is a finite set.
        Let us take $(i_0,j_0) \in S$ such that the first coordinate is the smallest 
        among elements of $S$.

        Since $\Psi \mid \a(\Psi) = \Psi(z^d,w^d)$,
        we have $\Psi(a^{d^n}, b^{d^n})=0$ for all $n \geq 1$.
        Thus 
        \begin{align}
            0 &= \frac{\Psi(a^{d^n}, b^{d^n})}{(a^{i_0}b^{j_0})^{d^n} } 
            = \frac{1}{(a^{i_0}b^{j_0})^{d^n} }\sum_{i,j \geq 0} c_{ij} (a^ib^j)^{d^n}\\
            & =  \sum_{(i,j) \in S} c_{ij} \frac{(a^ib^j)^{d^n}}{(a^{i_0}b^{j_0})^{d^n}}
            +  \sum_{(i,j) \notin S} c_{ij}\frac{(a^ib^j)^{d^n}}{(a^{i_0}b^{j_0})^{d^n}}
        \end{align}
        for all $n \geq 1$.
        If $(i,j) \notin S$, then we have
        \begin{align}
            &\left| \frac{c_{ij}(a^ib^j)^{d^n}}{(a^{i_0}b^{j_0})^{d^n}} \right|
            = \left| \frac{c_{ij}a^ib^j}{a^{i_0}b^{j_0}} \right|
            \left| \frac{(a^ib^j)^{d^n-1}}{(a^{i_0}b^{j_0})^{d^n-1}} \right|\\
            &\leq \frac{\| \Psi \|_{K\{r^{-1}z,s^{-1}w\}}}{|a^{i_0}b^{j_0}|}
            \left| \frac{a^ib^j}{a^{i_0}b^{j_0}} \right|^{d^n-1}
            \leq \frac{\| \Psi \|_{K\{r^{-1}z,s^{-1}w\}}}{|a^{i_0}b^{j_0}|}
            \left(\frac{1}{p^{1/\d}}\right)^{d^n-1}
        \end{align}
        where $\d \in \Z_{\geq 1}$ is independent of $i,j$.
        By this, we get
        \begin{align}
            \left|\sum_{(i,j) \notin S} c_{ij}\frac{(a^ib^j)^{d^n}}{(a^{i_0}b^{j_0})^{d^n}} \right| \leq \frac{\| \Psi \|_{K\{r^{-1}z,s^{-1}w\}}}{|a^{i_0}b^{j_0}|}
            \left(\frac{1}{p^{1/\d}}\right)^{d^n-1} \xrightarrow{n \to \infty} 0.
        \end{align}
        Thus we have
        \begin{align}
            \lim_{n \to \infty} \sum_{(i,j) \in S} c_{ij} \frac{(a^ib^j)^{d^n}}{(a^{i_0}b^{j_0})^{d^n}} =0.
        \end{align}
        Now, since $|a^{i-i_0}b^{j-j_0}|=1$ for $(i,j) \in S$,
        we have
        \begin{align}
            j - j_0 = -\frac{\log|a|}{\log|b|}(i-i_0).
        \end{align}
        Let us write $-\log|a|/\log|b| = m/q$ by coprime integers
        $m,q$ with $m \leq -1$ and $q \geq 1$.
        Then fix a $q$-th root $b'$ of $b$ in $\C_p$.
        Then we have
        \begin{align}
            \frac{a^ib^j}{a^{i_0}b^{j_0}} = a^{i-i_0} b^{m(i-i_0)/q} = 
            a^{i-i_0} {b'}^{m(i-i_0)} = (a{b'}^m)^{i-i_0}.
        \end{align}
        Set $\xi = a{b'}^m$. Then we have $|\xi|=1$ and 
        \begin{align}
            \lim_{n \to \infty} \sum_{(i,j) \in S} c_{ij}\xi^{i-i_0} = 0.
        \end{align}
        By \cref{lem:seq-conv-to-roots} below, $\xi$ is a root of $1$.
        This implies $a^qb^m$ is a root of $1$, and we are done.
        
    \end{proof}

    The above claim implies the following.

    \begin{claim}\label{claim:Psidivideszkwl}
        There are $k,l \in \Z_{\geq 1}$ such that
        \begin{align}
            \Psi \mid z^k - w^l
        \end{align}
        in $K\{r^{-1}z, s^{-1}w \}$.
    \end{claim}
    \begin{proof}[Proof of \cref{claim:Psidivideszkwl}]
        Let $\Psi = u\Psi_1^{e_1} \cdots \Psi_n^{e_n}$ be the prime factorization in 
        $\C_p\{r^{-1}z, s^{-1}w\}$ with $u$ being a unit and $\Psi_i$ being primes.
        Suppose $\Psi_i$ does not divide any of $z, w$, and $z^k - w^l$ $(k,l \in \Z_{\geq 1})$
        in $\C_p\{r^{-1}z, s^{-1}w\}$.
        Let $f$ be any of $z, w$, and $z^k - w^l$.
        Then the Krull dimension of $\C_p\{r^{-1}z, s^{-1}w\}/(\Psi_i,f)$ is zero
        and hence, it is an Artinian ring.
        Thus, it has only finitely many maximal ideals, and thus
        \begin{align}
            \left\{ (a,b) \in \bD_{\C_p}(r) \times \bD_{\C_p}(s) \ \middle|\ \Psi_i(a,b)=f(a,b)=0 \right\}
        \end{align}
        is a finite set.
        By \cref{claim:zero-locus-Psi}, this implies 
        \begin{align}\label{eq:zero_set_of_Psii}
            \left\{ (a,b) \in \bD_{\C_p}(r) \times \bD_{\C_p}(s) \ \middle|\ \Psi_i(a,b)=0 \right\}
        \end{align}
        is countable.
        This is a contradiction because this set is uncountable.
        Indeed, first, take an isometric isomorphism 
        $\s_1 \colon \C_p\{r^{-1}z, s^{-1}w \} \xrightarrow{\sim} \C_p\{z,w\}$
        by scaling $z$ and $w$ (recall $r,s \in |\C_p^{\times}|$).
        Then, take isometric isomorphism 
        $\s_2 \colon \C_p\{z,w\} \xlongrightarrow{\sim} \C_p\{z,w\} $
        so that $\s_2(\s_1(\Psi_i)) = u W$, where
        $u \in \C_p\{z,w\}^{\times}$ and $W \in \C_p\{z\}[w]$ is a Weierstrass
        polynomial. Note that the degree of $W$ in $w$ is at least one since
        $\Psi_i$ is not a unit. Then we get
        \begin{align}
            \C_p\{r^{-1}z, s^{-1}w \}/(\Psi_i) \simeq \C_p\{z\}[w]/(W) 
        \end{align}
        and this is finite as a $\C_p\{z\}$-module.
        (cf.\ for example \cite[section 5.2]{BGR_NA_analysis} or
        \cite[section 2.3 and 2.4]{Menares_Tate_alg}.)
        Hence \cref{eq:zero_set_of_Psii} has a surjection onto $\bD_{\C_p}(1)$,
        which is uncountable.
        
        Thus we proved every $\Psi_i$ divides one of $z, w$, and $z^k-w^l$ $(k,l \in \Z_{\geq 1})$
        in $\C_p\{r^{-1}z, s^{-1}w\}$.
        Thus there is a finite product $\widetilde{\Psi}$ of some of
        $z, w$, and $z^k-w^l$ for $k,l \in \Z_{\geq 1}$ such that
        $\Psi \mid \widetilde{\Psi}$ in $\C_p\{r^{-1}z, s^{-1}w\}$.
        Since $\Psi, \widetilde{\Psi} \in K\{r^{-1}z, s^{-1}w\}$,
        by \cref{lem:div_and_fld_ext} below, $\Psi$ divides $\widetilde{\Psi}$ in $K\{r^{-1}z, s^{-1}w\}$.
        Finally, since $\Psi$ is prime and we assumed $z \not\mid \Psi$ and $w \not\mid \Psi$,
        there is some $k,l \in \Z_{\geq 1}$ such that $\Psi \mid z^k-w^l$.
    \end{proof}

    Now note that $z^k-w^l$ is invariant under the map $(z^d,w^d)$.
    Thus, replacing $\a$ with iterates, we may assume 
    every minimal prime ideal of $K\{r^{-1}z, s^{-1}w\}$ containing $z^k-w^l$ is either fixed or 
    mapped to a fixed one by the pullback by $\a$ (cf.\ Case 2-2).
    Let $z^k -w^l = u P_1^{e_1} \cdots P_m^{e_m}$ be the prime factorization in 
    $K\{r^{-1}z, s^{-1}w\}$ where $u$ is a unit, $P_i$ are distinct primes
    with $P_1 = \Psi$, and $e_i \in \Z_{\geq 1}$.
    Now suppose $(a,b) \in \bD_K(r) \times \bD_K(s)$ satisfies $\Psi(a^{d^n},b^{d^n})=0$
    for some $n \in \Z_{\geq 0}$.
    We want to show that there is $n_0 \geq 0$ independent of $(a,b)$ such that
    $\Psi(a^{d^{n_0}}, b^{d^{n_0}})=0$.
    First note that we have $(a^{d^n})^k - (b^{d^n})^l=0$.
    If $ab=0$, we have $a = b= 0$, and we are done.
    Suppose $ab \neq 0$.
    Then 
    \begin{align}
        \left( \frac{a^k}{b^l}\right)^{d^n} = 1.
    \end{align}
    Since $a^k/b^l \in K$ and $K$ is a finite extension of $\Q_p$, there is 
    $n_1 \geq 0$ depending only on $K$ and $d$ such that
    \begin{align}
        \left( \frac{a^k}{b^l}\right)^{d^{n_1}} = 1
    \end{align}
    or equivalently
    \begin{align}
        (a^{d^{n_1}})^k - (b^{d^{n_1}})^l = 0.
    \end{align}
    (cf.\ \cite[II (7.12),(7.13)]{Neu99}, \cite[Proposition 3.6]{BMS23}.)
    Thus there is $P_i$ whose zero locus is invariant under $(z^d,w^d)$ and such that
    $P_i(a^{d^{n_1+1}}, b^{d^{n_1+1}})=0$.
    If $i=1$, then we are done.
    Suppose $i \geq 2$.
    Then by the $(z^d,w^d)$-invariance of $(P_i=0)$ and $(\Psi=0)$ as well as our 
    assumption, we have
    \begin{align}
        &P_i(a^{d^{n_1+1+n'}}, b^{d^{n_1+1+n'}}) = 0\\
        &\Psi(a^{d^{n_1+1+n'}}, b^{d^{n_1+1+n'}}) = 0
    \end{align}
    for all large $n'$.
    Since $K\{r^{-1}z, s^{-1}w\}/(P_i, \Psi)$ is Artinian, 
    \begin{align}
        \left\{ (a',b') \in \bD_{K}(r) \times \bD_{K}(s) \ \middle|\ P_i(a',b')=\Psi(a',b')=0 \right\}
    \end{align}
    is a finite set.
    Thus there are $n'' > n' \geq 0$ such that
    \begin{align}
        (a^{d^{n_1+1+n'}}, b^{d^{n_1+1+n'}}) = (a^{d^{n_1+1+n''}}, b^{d^{n_1+1+n''}}). 
    \end{align}
    Thus we have $|a| = |b| = 1$ and this contradicts to 
    $|a|\leq r <1$.
    This completes the proof of Case 3-2, and also completes the proof of 
    \cref{prop:PIQ-P1P1-Zp}.
\end{proof}

\begin{lemma}\label{lem:seq-conv-to-roots}
    Let $p$ be a prime and $P(X) \in \C_p[X]$ be a non-zero polynomial.
    Let $d \in \Z_{\geq 1}$ be such that $p \not\mid d$.
    If a $\xi \in \C_p$ with $|\xi|=1$ satisfies 
    \begin{align}\label{eq:limP(xidn)}
        \lim_{n \to \infty} P(\xi^{d^n})=0,
    \end{align}
    then $\xi$ is a root of $1$.
\end{lemma}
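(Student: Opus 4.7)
The plan is to combine a pigeonhole--continuity argument with the $p$-adic logarithm. Factor $P(X) = c\prod_{i=1}^{r}(X-\alpha_i)$ over $\C_p$. The trivial bound
\[
|P(y)| = |c|\prod_{i=1}^{r}|y-\alpha_i| \geq |c|\bigl(\min_i|y-\alpha_i|\bigr)^r
\]
together with the hypothesis $P(\xi^{d^n}) \to 0$ shows $\min_i|\xi^{d^n}-\alpha_i|\to 0$. Pigeonholing over $i$ yields a subsequence $(n_j)$ and a single root $\alpha$ of $P$ with $\xi^{d^{n_j}}\to \alpha$ in $\C_p$; in particular $|\alpha|=1$.

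Next I would show that $\alpha$ is itself a root of unity. Continuity of $y\mapsto y^{d^k}$ gives $\xi^{d^{n_j+k}} = (\xi^{d^{n_j}})^{d^k}\to\alpha^{d^k}$ for every fixed $k\geq 0$, and then continuity of $P$ combined with $P(\xi^{d^n})\to 0$ along the full sequence forces $P(\alpha^{d^k})=0$. Thus $\{\alpha^{d^k}:k\geq 0\}$ is a subset of the finite zero set of $P$, so $\alpha^{d^k}=\alpha^{d^l}$ for some $0\leq k<l$. With $d\geq 2$ (which is the relevant case in the invocation of the lemma within Case 3-2), one has $d^l-d^k\geq 1$, so $\alpha$ has some finite order $N\geq 1$.

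The final step descends from $\alpha$ to $\xi$. From $\xi^{Nd^{n_j}} = (\xi^{d^{n_j}})^N \to \alpha^N = 1$, the elements $\xi^{Nd^{n_j}}$ lie in $1+\m_{\C_p}$ for $j$ large. Let $M$ be the order of $\bar\xi\in\overline{\F}_p^\times$ and set $z := \xi^{MN}\in 1+\m_{\C_p}$; then $z^{d^{n_j}} = (\xi^{Nd^{n_j}})^M\to 1$. The standard $p$-adic logarithm on $1+\m_{\C_p}$ is a continuous group homomorphism with kernel $\mu_{p^\infty}$, so
\[
d^{n_j}\log z = \log\bigl(z^{d^{n_j}}\bigr)\longrightarrow 0.
\]
The hypothesis $p\nmid d$ is essential here: it ensures $|d^{n_j}|_p = 1$, so $|d^{n_j}\log z| = |\log z|$ is a constant independent of $j$. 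This constant must therefore vanish, so $\log z = 0$, $z\in\mu_{p^\infty}$, and $\xi$ is a root of unity.

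The main obstacle is the final paragraph: converting the purely topological subsequential convergence $z^{d^{n_j}}\to 1$ into the algebraic statement that $\xi$ itself is a root of unity. Without the assumption $p\nmid d$ the argument collapses, as $|d^{n_j}|_p$ could tend to $0$ and the logarithm identity would carry no information; this is precisely why the hypothesis appears in the statement.
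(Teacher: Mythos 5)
Your proof is correct, and its decisive step runs along a genuinely different line from the paper's. The first phase is essentially shared: both arguments locate the accumulation points of $(\xi^{d^n})_n$ among the zeros of $P$ and use the finiteness of that zero set, together with its stability under $z \mapsto z^d$, to produce a root-of-unity limit (you do this along one subsequence via $\alpha^{d^k}=\alpha^{d^l}$; the paper instead traps the tail of the sequence in a small ball around a $d^m$-periodic root $\zeta$). Where you diverge is in upgrading ``$\xi^{d^{n_j}}$ converges to a root of unity'' to ``$\xi$ is a root of unity'': the paper does this by an elementary ultrametric rigidity argument --- expanding $\eta^{(d^m)^{n+1}}-\zeta$ binomially and using $|d|_p=1$ to show that the distance $|\eta^{(d^m)^n}-\zeta|$ is constant along the iteration, so convergence forces eventual equality --- whereas you pass to the principal unit $z=\xi^{MN}\in 1+\m_{\C_p}$ and use the $p$-adic logarithm, with $|d^{n_j}|_p=1$ forcing $\log z=0$ and hence $z\in\mu_{p^\infty}$. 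Both proofs hinge on $p\nmid d$ at exactly this point; the paper's version is self-contained, while yours is shorter at the cost of invoking the standard fact that the kernel of $\log$ on $1+\m_{\C_p}$ is $\mu_{p^\infty}$. Your parenthetical restriction to $d\geq 2$ is apt: for $d=1$ the statement reduces to ``$P(\xi)=0$, $|\xi|=1$ implies $\xi$ is a root of unity,'' which fails, and the paper's own proof likewise needs $d^m>1$ when concluding that $\zeta^{d^m}=\zeta$ makes $\zeta$ a root of unity; since the lemma is only applied with $d$ a power of a degree at least $2$, this is harmless.
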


\begin{proof}
    Let $\zeta_1, \dots, \zeta_r \in \C_p$ be all the distinct roots of $P$
    to which the sequence $\{\xi^{d^n}\}_n$ accumulates.
    We may replace $P$ with $(X - \zeta_1)\cdots (X - \zeta_r)$.
    Note that for each $\zeta_i$, there is $\zeta_j$ such that
    $\zeta_i^d = \zeta_j$.
    Indeed, take a subsequence $\{n_k\}_k$ such that
    $\xi^{d^{n_k}} \to \zeta_i$.
    Then $\xi^{d^{n_k+1}} \to \zeta_i^d$.
    By \cref{eq:limP(xidn)}, $\zeta_i^d$ must be equal to some $\zeta_j$.

    Now take $m \geq 1$ so that $\zeta_i^{d^m}=\zeta_i$
    or $\zeta_i \neq \zeta_i^{d^m} = \zeta_i^{d^{2m}}$ for all $i$.
    Then take small mutually disjoint open balls $B_i$ 
    around $\zeta_i$. Take small open balls $\zeta_i \in B_i' \subset B_i$
    so that the power map $z \mapsto z^{d^m}$ maps $B_i'$ into
    $B_j$ if $\zeta_i^{d^m}=\zeta_j$.
    Then there is $n_0 \geq 1$ such that for all $n \geq n_0$, we have
    $\xi^{d^n} \in B_i'$ for a unique $i$.
    Take $i$ such that $\zeta_i^{d^m} = \zeta_i$ and
    $n_1 \geq n_0$ such that $\xi^{d^{n_1}} \in B_i'$.
    Then we have $\xi^{d^{n_1 + m}} \in B_i$ and hence
    $\xi^{d^{n_1 + m}} \in B_i'$.
    Repeat this and we get $\xi^{d^{n_1 + nm}} \in B_i'$ for all $n\geq 0$.
    By \cref{eq:limP(xidn)}, we have 
    $\lim_{n \to \infty}\xi^{d^{n_1 + nm}} = \zeta_i $.
    Set $\eta = \xi^{d^{n_1}}$ and $\zeta = \zeta_i$.
    Then we have $\lim_{n \to \infty}\eta^{(d^m)^n} = \zeta$
    and $\zeta^{d^{m}}=\zeta$.
    Note that $|\zeta| = 1$ since $|\eta|=1$ and hence 
    $\zeta$ is a root of $1$.
    
    To end the proof, it is enough to show that $\eta$ is a root of $1$.
    Let us take an $n \geq 0$ such that
    $|\eta^{(d^m)^n} - \zeta| < 1$.
    Then we have
    \begin{align}
        \eta^{(d^m)^{n+1}}
        &= \left(\zeta + \eta^{(d^m)^n} - \zeta \right)^{d^m}\\
        &=\sum_{l=0}^{d^m} 
        \binom{d^m}{l} \zeta^{d^m - l} 
        \left(\eta^{(d^m)^n} - \zeta \right)^l\\
        &= \zeta^{d^m} + d^m \zeta^{d^m -1} \left(\eta^{(d^m)^n} - \zeta \right) + \sum_{l=2}^{d^m} \binom{d^m}{l} \zeta^{d^m - l} 
        \left(\eta^{(d^m)^n} - \zeta \right)^l\\
        & = \zeta + d^m \zeta^{d^m -1} \left(\eta^{(d^m)^n} - \zeta \right) + \sum_{l=2}^{d^m} \binom{d^m}{l} \zeta^{d^m - l} 
        \left(\eta^{(d^m)^n} - \zeta \right)^l.
    \end{align}
    Now since $p \not\mid d$, we have $|d| = 1$.
    Thus, combined with $|\zeta|=1$, we have
    \begin{align}
        \left| \sum_{l=2}^{d^m} \binom{d^m}{l} \zeta^{d^m - l} 
        \left(\eta^{(d^m)^n} - \zeta \right)^l \right|
        < \left| d^m \zeta^{d^m -1} \left(\eta^{(d^m)^n} - \zeta \right) \right|
    \end{align}
    for $2\leq l \leq d^m$ and hence 
    \begin{align}
        \left|\eta^{(d^m)^{n+1}} - \zeta \right| 
        = \left| d^m \zeta^{d^m -1} \left(\eta^{(d^m)^n} - \zeta \right) \right|
        = \left|\eta^{(d^m)^n} - \zeta  \right|.
    \end{align}
    This implies $\left|\eta^{(d^m)^n} - \zeta  \right|$ is constant as 
    $n$ goes to infinity. Since $\eta^{(d^m)^n}$ converges to $\zeta$, we have $\eta^{(d^m)^n} = \zeta$ for a large $n$
    and we are done.
\end{proof}

\begin{lemma}\label{lem:div_and_fld_ext}
    Let $K \subset L$ be a field extension.
    Let $f \in K\lb T_1, \dots ,T_n \rb \smallsetminus \{0\}$ and
    $g \in L\lb T_1, \dots ,T_n \rb$.
    If $fg \in K\lb T_1, \dots ,T_n \rb$, then we have $g \in K\lb T_1, \dots ,T_n \rb$.
\end{lemma}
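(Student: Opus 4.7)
The plan is to reduce the statement to linear algebra by tracking coefficients through a $K$-basis of $L$. Fix a $K$-basis $\{e_i\}_{i \in I}$ of $L$ containing a distinguished element $e_{i_0} = 1$. Writing $g = \sum_{\alpha} b_\alpha T^\alpha$, expand each coefficient $b_\alpha \in L$ uniquely as $b_\alpha = \sum_{i \in I} b_{\alpha, i} e_i$ with $b_{\alpha, i} \in K$ and only finitely many non-zero for each fixed $\alpha$. For every $i \in I$, this defines a formal power series
\begin{align}
    g_i := \sum_{\alpha} b_{\alpha, i}\, T^\alpha \in K\lb T_1, \dots, T_n \rb,
\end{align}
even though the set $\{i \in I \mid g_i \neq 0\}$ may be infinite.

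The key observation is that for each multi-index $\alpha$, the coefficient of $T^\alpha$ in $fg$ equals the finite sum $\sum_{i \in I} (fg_i)_\alpha\, e_i$ in $L$ (finite because only finitely many $b_{\gamma, i}$ with $\gamma \leq \alpha$ componentwise are non-zero). The assumption $fg \in K\lb T_1, \dots, T_n \rb$ forces every such coefficient to lie in $K = K \cdot e_{i_0}$, so linear independence of $\{e_i\}$ gives $(fg_i)_\alpha = 0$ for all $i \neq i_0$ and all $\alpha$. Hence $fg_i = 0$ for every $i \neq i_0$, and since $K\lb T_1, \dots, T_n \rb$ is an integral domain with $f \neq 0$, we conclude $g_i = 0$ for $i \neq i_0$, and therefore $g = g_{i_0} \in K\lb T_1, \dots, T_n \rb$.

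There is no substantive obstacle in this strategy; the only point needing mild care is the well-definedness of the decomposition $g = \sum_i g_i e_i$, which holds term-by-term because each basis expansion of a single coefficient $b_\alpha$ is a finite sum. The argument uses nothing beyond the $K$-vector space structure of $L$ and the integral-domain property of the formal power series ring, and in particular does not require any topological or algebraic hypothesis on the extension $K \subset L$.
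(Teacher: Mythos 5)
Your argument is correct: for each multi-index $\alpha$ the coefficient of $T^\alpha$ in $fg$ is the finite sum $\sum_i (fg_i)_\alpha e_i$, linear independence of the basis forces $(fg_i)_\alpha=0$ for $i\neq i_0$, and since $K\lb T_1,\dots,T_n\rb$ is a domain and $f\neq 0$ you get $g_i=0$ for $i\neq i_0$, hence every coefficient of $g$ lies in $K$. The well-definedness issue you flag is indeed the only point needing care, and it is handled correctly since each $b_\alpha$ has a finite basis expansion. This is, however, a genuinely different route from the paper's: there the lemma is proved by induction on the number of variables $n$, writing $f=\sum_i a_iT_n^i$ and $g=\sum_j b_jT_n^j$ with $a_i\in K\lb T_1,\dots,T_{n-1}\rb$, $b_j\in L\lb T_1,\dots,T_{n-1}\rb$, and deducing successively from the relations $\sum_{i=0}^k a_ib_{k-i}\in K\lb T_1,\dots,T_{n-1}\rb$ and the induction hypothesis that each $b_j$ lies in $K\lb T_1,\dots,T_{n-1}\rb$. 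Your basis-decomposition argument treats all variables at once, needs no induction (and in particular no bookkeeping about which coefficient of $f$ in $T_n$ is the first nonzero one, which the paper's inductive step implicitly relies on), and makes completely transparent that only the $K$-vector space structure of $L$ and the integrality of the power series ring are used; the paper's proof is comparably elementary but proceeds by peeling off one variable at a time and iterating a divisibility-of-coefficients argument.
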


\begin{proof}
    We prove by induction on $n$.
    If $n=0$, then the statement is trivial.
    Suppose $n \geq 1$. Write
    \begin{align}
        f = \sum_{i \geq 0} a_i T_n^i,\ g = \sum_{j \geq 0} b_j T_n^j
    \end{align}
    where $a_i \in K\lb T_1, \dots, T_{n-1} \rb, b_j \in L\lb T_1, \dots, T_{n-1} \rb$.
    As $fg \in K\lb T_1, \dots ,T_n \rb$, 
    \begin{align}
        a_0b_0,\ a_0b_1 + a_1 b_0,\dots,\sum_{i=0}^k a_ib_{k-i},\dots
    \end{align}
    are contained in $K\lb T_1, \dots, T_{n-1} \rb$.
    Using the induction hypothesis, we inductively see that 
    $b_j \in K\lb T_1, \dots, T_{n-1} \rb$ for all $j$.
\end{proof}

\bibliographystyle{abbrv}
\bibliography{arXiv_version/main_arXiv}

\end{document}